%% file: main.tex
\documentclass[10.5pt,a4paper]{article}
\usepackage{graphicx} % Required for inserting images
\usepackage{amsmath,amssymb,mathrsfs,amsthm,enumerate}
\usepackage{mathtools}
\usepackage[dvipsnames]{xcolor}
\usepackage{tikz,multirow}
\usepackage{xurl}
\usepackage{algorithm,algpseudocode}
\usepackage{comment}

\usepackage[pdftex, colorlinks, linkcolor=blue, citecolor=blue, urlcolor=blue]{hyperref}
\hypersetup{linkcolor=MidnightBlue, citecolor=magenta, urlcolor=MidnightBlue}
\usepackage[a4paper, 
		top=3.0cm,
		bottom=3.0cm,
		left=2.5cm, 
		right=2.5cm,
		marginparwidth=1.4cm]{geometry}
\usepackage{cleveref}
\usepackage[margin=1cm]{caption}
\usepackage[bibencoding=utf8,style=alphabetic]{biblatex}
\AtBeginBibliography{\small}
\AtEveryBibitem{%
\ifentrytype{article}{
    \clearfield{issn}%
    \clearfield{isbn}%
}{}
\ifentrytype{inproceedings}{
    \clearfield{issn}%
    \clearfield{isbn}%
}{}
\ifentrytype{inbook}{
    \clearfield{issn}%
    \clearfield{isbn}%
}{}
}

\usepackage{etoc}

\newtheorem{definition}{Definition}[section]
\newtheorem{theorem}{Theorem}
\newtheorem{lemma}[theorem]{Lemma}

\newtheorem{proposition}[theorem]{Proposition}
\newtheorem{conjecture}[theorem]{Conjecture}
\theoremstyle{definition}
\theoremstyle{remark}
\newtheorem{remark}[theorem]{Remark}
\newtheorem{example}{Example}

\crefname{conjecture}{conjecture}{conjectures}
\Crefname{conjecture}{Conjecture}{Conjectures}

\newenvironment{appendixformulas}
  {\begingroup\small\allowdisplaybreaks}
  {\endgroup}

\DeclareMathOperator{\vol}{vol}
\newcommand{\Q}{\mathbb{Q}}
\newcommand{\R}{\mathbb{R}}

\newcommand{\N}{\mathbb{N}}

\DeclareMathOperator{\conv}{conv}
\DeclareMathOperator{\slicetxt}{slice}
\DeclareMathOperator{\slabtxt}{slab}
\newcommand{\slice}{\slicetxt(a,t,\|\cdot\|)}
\newcommand{\slab}{\slabtxt(a,t,\|\cdot\|)}

\newcommand{\sliceinf}{\slicetxt(a,t,\|\cdot\|_{\infty})}
\newcommand{\slabinf}{\slabtxt(a,t,\|\cdot\|_{\infty})}

\newcommand{\mslice}{\int_{\slice}\ \sum_{i=1}^d x_i^M}
\newcommand{\mslab}{\int_{\slab}\ \sum_{i=1}^d x_i^M}

\newcommand{\normball}{B_{\|\cdot\|}}

\newcommand{\hypercube}{B_{\infty}}

\usepackage{todonotes}

\allowdisplaybreaks

\makeatletter
\newcommand{\EnableMathBreaks}{%
  \relpenalty=100 \binoppenalty=100
  \mathcode`*="8000
  \begingroup\lccode`~=`*\lowercase{\endgroup
    \def~}{\allowbreak\mkern2mu*\mkern2mu\allowbreak}%
  \mathcode`+="8000
  \begingroup\lccode`~=`+\lowercase{\endgroup
    \def~}{\allowbreak\mkern2mu+\mkern2mu\allowbreak}%
}
\newcommand{\DisableMathBreaks}{%
  \mathcode`*="2203
  \mathcode`+="202B
}
\makeatother

\newcommand\blfootnote[1]{%
  \begingroup
  \renewcommand\thefootnote{}\footnote{#1}%
  \addtocounter{footnote}{-1}%
  \endgroup
}

\title{\sc{Critical moments of slices and slabs of the cube \\ (and other polyhedral norms)}}
\author{Marie-Charlotte Brandenburg \and Jes\'us A. De Loera \and Yu Luo \and Chiara Meroni}
\date{}

\begin{document}
\maketitle

\noindent
\blfootnote{
\textbf{Keywords:} slices of cubes, slabs of cubes, moment formulas, volume formulas, critical points, polyhedral norm. }
\blfootnote{
\textbf{MSC classes:} 
52A38, %Length, area, volume and convex sets (aspects of convex geometry)
52B55, %Computational aspects related to convexity
52-08 %Computational methods for problems pertaining to convex and discrete geometry
(primary);
68W30, %Symbolic computation and algebraic computation
90C57 (secondary). %Polyhedral combinatorics, branch-and-bound, branch-and-cut
}

\begin{abstract}
In this article, we present a unified algebraic–combinatorial framework for computing explicit, piecewise rational, and combinatorially indexed parametric formulas for volumes and higher moments of slices and slabs of polyhedral norm balls. Our main method builds on prior work concerning a combinatorial decomposition of the parameter space of all slices of a polytope. We extend this framework to slabs, and find a polynomial-time algorithm in fixed dimension. We also exhibit computational methods to obtain moments of arbitrary order for all slices or slabs of any polyhedral norm ball, and an algebraic framework for analyzing their critical points. 
In addition, we present an experimental study of the $d$-dimensional unit cube. Our analysis recovers and reinterprets the known volume formulas for slabs and slices of the two- and three-dimensional cubes, first obtained by König and Koldobsky. Moreover, our method identifies a new complete family of fourteen rational functions giving the volumes of slices and slabs of the four-dimensional cube. We further compute explicit higher moments of slices and slabs in dimensions two and three,  
and derive explicit formulas for moments of arbitrary order for slices of the two-dimensional cube, describing their critical points.
For the four-dimensional cube, we further use these formulas to identify candidate global maxima and minima for slice and slab volumes; the candidates are verified to be critical points by exact symbolic computation and are checked numerically to give the global extrema.
\end{abstract}

\section{Introduction}
\label{sec:intro}
Understanding the moments of convex bodies, in particular high-dimensional polyhedral norm balls, is a key tool in convex geometry and analysis. In this paper, moments are defined as integrals of powers of coordinates or norms (e.g., expectations like $E[|x_i|^k]$ for a random point $x$ in the norm ball). If $K\subset\R^d$ is a centrally symmetric convex body and $X$ is a measure uniformly distributed on $K$, then for every direction $a$ in the unit sphere the one-dimensional marginal $\langle a,X\rangle$ has a density proportional to the volume of $K$ intersected with a hyperplane orthogonal to $a$, which varies with the distance of the hyperplane from the origin. More generally, the \emph{moments of slices} are exactly the
moment data of one-dimensional marginals. This point of view underlies much of asymptotic geometric analysis:
information about the distribution of $\langle a,X\rangle$ as $a$ varies controls the concentration
phenomena, the deviation inequalities, and the geometry of random sections and projections,
themes that are at the heart of high-dimensional convexity, 
and its functional-analytic consequences \cite{MilmanSchechtman1986,Milman2006,Schneider14}. In short, the moments of convex bodies are basic quantities that connect convex geometry, probability, and the theory of Banach spaces.

This paper discusses how to efficiently compute the volumes and higher-order moments of \emph{slices} and \emph{slabs} of \emph{polyhedral norm balls}, and shows how to recover explicit piecewise rational functions for them in terms of the slicing parameters, $(a,t)$, $a \in \R^d$, where $d$ is the ambient dimension of the slab, and $t \in \R$, $t\geq 0$. 
Our methods are general and apply to any scaled unit ball
\[
B^d_{\|\cdot\|} \ :=\ \{x\in\R^d:\|x\|\le\tfrac12\},
\]
of an arbitrary polyhedral norm, such as the $\ell_\infty$-ball that is the centered hypercube
\[
B_\infty^d \ =\ \Bigl[-\tfrac12,\tfrac12\Bigr]^d.
\]
In what follows, let $S^{d-1}$ denote the unit sphere in $\R^d$, fix $a=(a_1,\dots,a_d)\in S^{d-1}$ and $t\ge 0$, and write
\[
H(a,t)\ :=\ \{x\in\R^d:\langle a,x\rangle=\tfrac{t}{2}\},
\]
where $\langle \cdot , \cdot \rangle$ is the standard scalar product on $\R^d$. In other words, $H(a,t)$ is a hyperplane with unit normal vector $a$ and Euclidean distance $t/2$ from the origin. This is the scaling of the slicing parameter that we use throughout the paper. We consider the \emph{slice}
\[
\slice :=\ B^d_{\|\cdot\|}\cap H(a,t),
\]
and the \emph{slab}
\[
\slab :=\ \{x\in B^d_{\|\cdot\|}:\ |\langle a,x\rangle|\le \tfrac{t}{2}\}.
\]
For $M\in\mathbb Z_{\geq 0}$ we focus on the $M$-th moment
\[
\mslice
\quad \text{and} \quad
\mslab
\]
although, more generally, our method applies to all polynomial integrands as given in \cite{intsimplex}.

\medskip
While we explain our methods for arbitrary polyhedral norm balls, experimentally we focus on the cube $B_\infty^d$. The geometry of hyperplane sections and slabs of the cube has become a central topic in convex geometry, combinatorics, and geometric functional analysis. A classical question is how large or small the $(d-1)$-dimensional volume of a hyperplane section can be, and which choices of $a$ and $t$ make the slicing hyperplane extremal.
For \emph{minimal} sections, Vaaler proved that every $k$-dimensional central section of the cube
$[-\tfrac12,\tfrac12]^d$ has volume at least $1$, with equality precisely for coordinate-parallel sections
\cite{Vaa79}; this completed earlier work of Hadwiger \cite{Hadwiger72} and Hensley \cite{Hensley79}. This also has extensions
to $\ell_p$-balls via work of Meyer and Pajor \cite{MP88}.
For \emph{maximal} central hyperplane sections, Ball showed that for the cube
$[-\tfrac12,\tfrac12]^d$ every central hyperplane section has $(d-1)$-dimensional volume at most $\sqrt{2}$,
with equality for normals proportional to $e_i\pm e_j$ \cite{Ball-1986}.
Koldobsky’s Fourier-analytic framework gives general formulas for section volumes and connects
these extremal questions to harmonic analysis \cite{Koldobsky-2005}. In the setting of noncentral slicing, Barthe and Koldobsky related extremal slabs to Laplace-transform techniques \cite{BartheKoldobsky-2003}; K\"onig and Koldobsky studied how the volume of a slice or slab depends on the direction $a$ and the
offset parameter $t$, and obtained explicit formulas in dimensions $d=2,3$ \cite{KK11}.  Pournin and collaborators investigated shallow sections at a prescribed distance from the center. They proved local optimality results for diagonal directions in high dimension, for a range of offsets \cite{Pournin-2023}, see also \cite{DezaHiriartUrrutyPournin2021,Schneider14}. Ambrus and collaborators \cite{ambrus2022critical,ambrus2024nondiagonal,ambrus2025estimates} studied critical points of the volume of central hyperplane sections of the $d$-dimensional cube, showing that non-diagonal critical sections exist in all dimensions $d \geq 4$, yet are saddle points — supporting the conjecture that all locally extremal sections are diagonal.

Moments of slices can be viewed as integrals of monomials over $(d-1)$-dimensional sections of a convex body, and thus capture how mass, curvature, and directional structure are distributed along hyperplane cuts. Such moments naturally arise in geometric tomography \cite{Gardner2006} and Radon-type reconstruction problems \cite{Natterer2001}, where line or hyperplane integrals encode information used in medical imaging, inverse problems, computer vision, and shape analysis, where moment profiles along slices are used to recover or classify geometric objects \cite{FlusserSuk1994}. Motivated by these applications, we compute moment formulas for slices and slabs of the cube and analyze them in detail for slices in dimension $2$.

This work is complementary in both method and emphasis to prior 
work. Rather than starting from Fourier transforms or analytic inequalities, we develop a unified
\emph{algebraic--combinatorial} framework that partitions the parameter space into finitely many
maximal chambers in which the combinatorics is constant, and computes volumes
and moments on each chamber by abstract triangulations and the use of simplex-integration formulas from \cite{intsimplex}. This framework was first introduced in \cite{slices} in the realm of slices of polytopes; in this article, we extend this framework to slabs of centrally symmetric polytopes, i.e., polyhedral norm balls, and illustrate the methods on the slices and slabs of the hypercube, extending the results of \cite{KK11}, and proving polynomial algorithms in computing volume and moment formulas for slices and slabs of polyhedral norm balls.

\subsection{Our Contributions}
\label{sec:our_contributions}

\indent
Our first contribution is an explicit computational algorithm to obtain
formulas for moments of {\bf all} slices and slabs of any polyhedral norm ball. They turn out to be piecewise rational and are valid within a chamber decomposition of the unit sphere 
which parametrizes and governs volumes and higher moments.

\paragraph{Algorithm for moments of slices and slabs of polyhedral norm balls.}

Given a $d$-dimensional polyhedral norm ball, our algorithm outputs piecewise rational formulas for the moments of slices and slabs in polynomial time in the input size. Note that volume is computed by setting $M=0$. Each piece of the function is supported on one chamber of a hyperplane arrangement (partially restricted to the sphere). Combining this parametrization with an integration formula from \cite{intsimplex} yields the following.

\begin{theorem}[Polynomial-time moment computation] \label{thm:main}
    Fix $d\in\N$ and a polyhedral norm $\| \cdot \|$ on $\R^d$. For any $M \in \mathbb Z_{\geq 0}$, the algebraic method introduced in \Cref{sec:methodology} computes the $M$-th moments $\mslice$ and $\mslab$ in polynomial time in the input size, as functions of $(a,t)\in S^{d-1}\times \R$.
\end{theorem}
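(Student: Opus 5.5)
The plan is to reduce both moment computations to integrating a polynomial over simplices whose vertices depend rationally on $(a,t)$, and to keep every step polynomial in fixed dimension $d$.

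\textbf{Chamber decomposition.} The unit ball $B^d_{\|\cdot\|}$ of a polyhedral norm is a centrally symmetric polytope $P$. We assume it is given by its vertex list $V$ or by its facet inequalities; in fixed $d$ the two representations convert into each other in polynomial time. Following \cite{slices}, consider the hyperplane arrangement in parameter space $(a,t)\in\R^{d}\times\R$ with hyperplanes
\[
\{\langle a,v\rangle = t/2\},\quad v\in V,
\]
together with the hyperplanes $\{\langle a,v-w\rangle=0\}$ coming from pairs of vertices. On each open chamber the following data are constant: the set of vertices strictly above and strictly below $H(a,t)$, and hence the set of edges crossing $H(a,t)$. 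Consequently the combinatorial type of the slice is constant. An arrangement of $N=O(|V|^2)$ hyperplanes in fixed dimension $d+1$ has $O(N^{d+1})$ chambers, and they can be enumerated, with one interior point per chamber, in polynomial time by incremental construction. For slabs we also add the hyperplanes $\{\langle a,v\rangle=-t/2\}$. By central symmetry these are just the previous ones for $-v$, so the arrangement stays polynomial. Restricting to $S^{d-1}$ only intersects the chambers with the sphere and does not change the combinatorics.

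\textbf{Rational vertex parametrization and triangulation.} Fix a chamber. Each vertex of the slice is the point where $H(a,t)$ meets a crossing edge $[v,w]$:
\[
x(a,t)=v+\frac{t/2-\langle a,v\rangle}{\langle a,w-v\rangle}(w-v),
\]
which is rational in $(a,t)$. The slice is the convex hull of these points. Its combinatorial type is fixed on the chamber, so we can compute, at the sample point, a triangulation of the slice (for instance a pulling triangulation). This triangulation remains valid as an abstract triangulation throughout the chamber. This is the step I expect to be the main obstacle: one must argue that the vertex orientations (the signs of determinants) do not change inside the chamber. I would first try to prove that the face lattice of $P\cap H$ is determined by which vertices lie on each side, using the fact that faces of the slice are exactly the intersections of faces of $P$ with $H$. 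Once the face lattice is fixed, a pulling triangulation that depends only on the face lattice is valid on the whole chamber.

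The slab $\{|\langle a,x\rangle|\le t/2\}\cap P$ is handled the same way. Its vertices are the vertices of $P$ lying in the slab together with the slice vertices on both bounding hyperplanes, and its faces are the intersections of faces of $P$ with the slab, so it is treated by the same argument one dimension up. In fixed $d$ the number of simplices is polynomially bounded, by the upper bound theorem applied to the size of the slice or slab.

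\textbf{Integration.} On each simplex $\Delta=\conv(s_0,\dots,s_k)$ we apply the formula of \cite{intsimplex} for integrating a power of a linear form:
\[
\int_\Delta \langle \ell,x\rangle^M\,dx=\frac{k!\,\vol(\Delta)\,M!}{(M+k)!}\sum_{|\alpha|=M}\prod_i\langle \ell,s_i\rangle^{\alpha_i}.
\]
We take $\ell=e_j$ and sum over $j$ to obtain $\sum_{j=1}^d x_j^M$. For the slice, $k=d-1$ and the $(d-1)$-volume is a Gram determinant in the rational vertices, which is rational because $\|a\|=1$. The number of terms is $\binom{M+k}{k}$, which is polynomial in $M$ for fixed $d$; equivalently one can use the short closed form $\sum_i \langle\ell,s_i\rangle^{M+k}/\prod_{j\ne i}\langle \ell,s_i-s_j\rangle$ when the values are generic. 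Summing over the simplices gives the rational function for that chamber. Putting these counts together yields polynomial total time and proves \Cref{thm:main}.
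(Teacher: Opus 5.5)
Your proposal is correct and follows essentially the same route as the paper. Both arguments rest on polynomially many chambers, rational vertex parametrizations from the edge-crossing formula, a triangulation fixed by the face lattice with its size bounded through the Upper Bound Theorem, and the \cite{intsimplex} simplex formula; the differences are cosmetic: you count chambers as cells of the central arrangement in $(a,t)$-space instead of citing \cite[Proposition 3.9]{slices}, and you use a pulling triangulation (with an explicit justification that it stays valid across a chamber) where the paper uses the barycentric subdivision.
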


\paragraph{The complete family of volume formulas of slices and slabs of the $4$-cube.}
The main computational contribution of this article concerns volumes and higher moments of the hypercube up to dimension $4$. 
Our computations recover the known volume formulas of K\"onig and Koldobsky in dimensions 2 and 3 \cite{KK11}. 
We extend their work by finding explicit formulas for both slabs and slices in dimension $4$, and their volumes are captured by $14$ distinct rational functions. Note that, up to the symmetry of the cube, these are all possible formulas.

\begin{theorem}[Volume formulas of the 4-dimensional cube]\label{thm:4dvol}
    Let $a = (a_1, a_2, a_3, a_4) \in S^3$ such that $a_1 \geq a_2 \geq a_3 \geq a_4 \geq 0$, and let $t \geq 0$. The volumes of slices and slabs of the $4$-dimensional cube are piecewise rational functions of $(a,t)$. Modulo signed permutations of the coordinates, these volumes admit a representation with exactly \emph{fourteen} distinct rational functions, each representing the volume of the corresponding slices or slabs for which $(a,t)$ is contained in a maximal (closed) chamber of a hyperplane arrangement associated with $\hypercube^4$.
\end{theorem}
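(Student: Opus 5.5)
The plan is to combine the chamber-decomposition framework of \Cref{thm:main} with an explicit symbolic computation, reduced by the symmetry of $\hypercube^4$.

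\textbf{Step 1: symmetry reduction.} The hyperoctahedral group of signed permutations preserves $\hypercube^4$ and acts on the parameters $(a,t)$. Hence we may restrict to the fundamental region $a_1\ge a_2\ge a_3\ge a_4\ge 0$, $t\ge 0$.

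\textbf{Step 2: the arrangement.} The combinatorial type of $\sliceinf$ is determined by which vertices $v\in\{\pm\tfrac12\}^4$ lie strictly above, on, or below $H(a,t)$. It therefore changes only on the hyperplanes
\[
\langle a,v\rangle=\tfrac t2 \quad\text{in } (a,t)\text{-space}.
\]
For slabs we add the mirrored hyperplanes $\langle a,v\rangle=-\tfrac t2$. By central symmetry these coincide with the slice hyperplanes after $v\mapsto -v$.

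On the fundamental region, most sign vectors are forced by the ordering of the $a_i$. The relevant inequalities reduce to comparing $t$ with the sixteen linear forms $\tfrac12(\pm a_1\pm a_2\pm a_3\pm a_4)$. I will enumerate the maximal chambers, each given by an ordering of these forms relative to $t$, together with the ordering of the forms among themselves where this matters. The latter adds arrangement hyperplanes such as $a_1=a_2+a_3+a_4$ and $a_1+a_4=a_2+a_3$.

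\textbf{Step 3: a rational function on each chamber.} I will compute one formula per chamber using the inclusion–exclusion formula for cube slices,
\[
\vol_{d-1}\bigl(\sliceinf\bigr)=\frac{1}{(d-1)!\,\prod_i a_i}\sum_{v}(-1)^{|v|}\,\bigl(\tfrac t2-\langle a, v'\rangle\bigr)_+^{d-1},
\]
where $v'$ runs over the vertices and $|v|$ counts the coordinates equal to $+\tfrac12$. Equivalently, one can use the triangulation and simplex-integration method of \Cref{sec:methodology}. For slabs, integrate this in $t$, which gives
\[
\frac{1}{d!\prod_i a_i}\sum_v \pm(\cdot)_+^{d}.
\]
On a fixed chamber the set of active positive parts is constant. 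The formula is therefore a single rational function, of the form polynomial divided by $\prod a_i$, with $d=4$.

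\textbf{Step 4: count distinct functions.} I will collect the rational functions over all chambers, for both slices and slabs, simplify them, and identify coinciding ones. Crossing certain walls can leave the function unchanged; this happens when the vertex crossing the wall contributes a term that cancels. Counting the distinct survivors should give fourteen. Exactness of the count also requires showing that the listed functions are pairwise distinct, which is immediate by comparing numerators symbolically.

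\textbf{Main obstacle.} The main obstacle is Step 2 together with the bookkeeping in Step 4. One must certify that every maximal chamber in the fundamental region is actually nonempty and is accounted for, for instance with a linear-programming feasibility check per candidate sign pattern. One must also decide correctly which adjacent chambers share a formula, so that exactly fourteen remain "modulo signed permutations." I would first run the polynomial-time algorithm of \Cref{thm:main} to enumerate the chambers, and then verify the merges by exact symbolic subtraction.
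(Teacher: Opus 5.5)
Your plan is correct. Its skeleton matches the paper's: reduce by symmetry, enumerate chambers, compute one rational function per chamber, count the distinct outputs. The engine that computes the functions is different, though.

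The paper runs its general vertex-ordering method. It lists the sweep-arrangement regions in the fundamental domain, each with eight $t$-intervals (\Cref{table:4d_chamber}). It computes a function on every maximal chamber $C_{i,j}$ by barycentric triangulation and determinants, reads off that only fourteen distinct outputs occur, and extends them to closed chambers by continuity (\Cref{lem:slab-chambers}).

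With the cube-specific inclusion--exclusion formula, the function on a cell depends only on the set $S$ of vertices below $H(a,t)$. So your natural decomposition is the coarser one into sign-vector cells of $\{\langle a,v\rangle=t/2\}$, each a union of the paper's chambers. This makes several things transparent that the paper only gets from computer output.

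\begin{itemize}
\item Closed-cell validity is automatic, because the switching term $(t/2-\langle a,v\rangle)^3$ vanishes on its wall.
\item Slabs use the same cells, and $\partial_t$ of a slab formula is the slice formula. So distinctness passes from slices to slabs.
\item The count becomes a hand enumeration. Label the antipodal pairs by $\sigma_1=a_1+a_2+a_3+a_4$, $\sigma_2=a_1+a_2+a_3-a_4$, $\sigma_3=a_1+a_2-a_3+a_4$, $\sigma_4=a_1-a_2+a_3+a_4$, $\sigma_5=a_1+a_2-a_3-a_4$, $\sigma_6=a_1-a_2+a_3-a_4$, $\sigma_7=a_1-a_2-a_3+a_4$, $\sigma_8=a_1-a_2-a_3-a_4$. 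A cell is determined by the set of $k$ with $|\sigma_k|>t$, together with the signs of $\sigma_7$ and $\sigma_8$ when those pairs belong to it.
\item The forced inequalities are $\sigma_1\ge\sigma_2\ge\sigma_3\ge\max(\sigma_4,\sigma_5)$, $\min(\sigma_4,\sigma_5)\ge\sigma_6\ge|\sigma_7|$, $\sigma_4\ge|\sigma_8|$ and $\sigma_7-\sigma_8=2a_4\ge0$. They leave exactly $7+3+1+3=14$ nonempty patterns: seven avoiding both $7$ and $8$, three containing $8$ but not $7$ (forcing $\sigma_8<0$), one containing $7$ but not $8$ (forcing $\sigma_7>0$), and three containing both. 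These are precisely the rows of \Cref{table:4d_kkchamber}.
\end{itemize}

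Distinctness is structural as well. On $\{\pm\tfrac12\}^4$, polynomials of degree at most $3$ span exactly the functions orthogonal to $v\mapsto(-1)^{|v|}$. Hence the only linear relation among the sixteen cubes $(t/2-\langle a,v\rangle)^3$ is $\sum_v(-1)^{|v|}(t/2-\langle a,v\rangle)^3=0$. Since $S\neq\emptyset$ whenever $t\ge0$, cells with different $S$ carry different functions.

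What the paper's route buys in exchange is uniformity: it applies to every polyhedral norm ball and every moment. Inclusion--exclusion in this form is special to the cube.

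There are two small slips.
\begin{itemize}
\item In the paper's normalization $H(a,t)=\{\langle a,x\rangle=t/2\}$, you compare $t/2$, not $t$, with $\tfrac12(\pm a_1\pm\cdots\pm a_4)$. Slabs then come from integrating over the offset $u\in[-t/2,t/2]$.
\item By the relation above, a vertex crossing $H(a,t)$ never leaves the function unchanged. Merges happen only across walls of the finer ordering arrangement where no vertex changes side, such as $\sigma_7=0$, $\sigma_8=0$ or $\sigma_4=\sigma_5$. Your own method does not need to add those walls at all.
\end{itemize}
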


\paragraph{Critical values of the volumes for the 4-dimensional cube.} From these formulas, we can examine the critical points of the volume function. We verified that the maximizers/minimizers in the statement are critical points by exact computation in \texttt{Macaulay2}. Unfortunately, we cannot turn this implicit description into an explicit one for all the chambers. This is due to the algebraic complexity of the set of critical points.
However, the contributions for the maxima and minima seem to be given by critical points for which we have an explicit description. This has been verified numerically using \texttt{Mathematica} and a plot of maxima and minima values can be seen in \Cref{fig:4cube_maxmin}. 

\begin{figure}[h]
    \centering
    \includegraphics[width=0.48\linewidth]{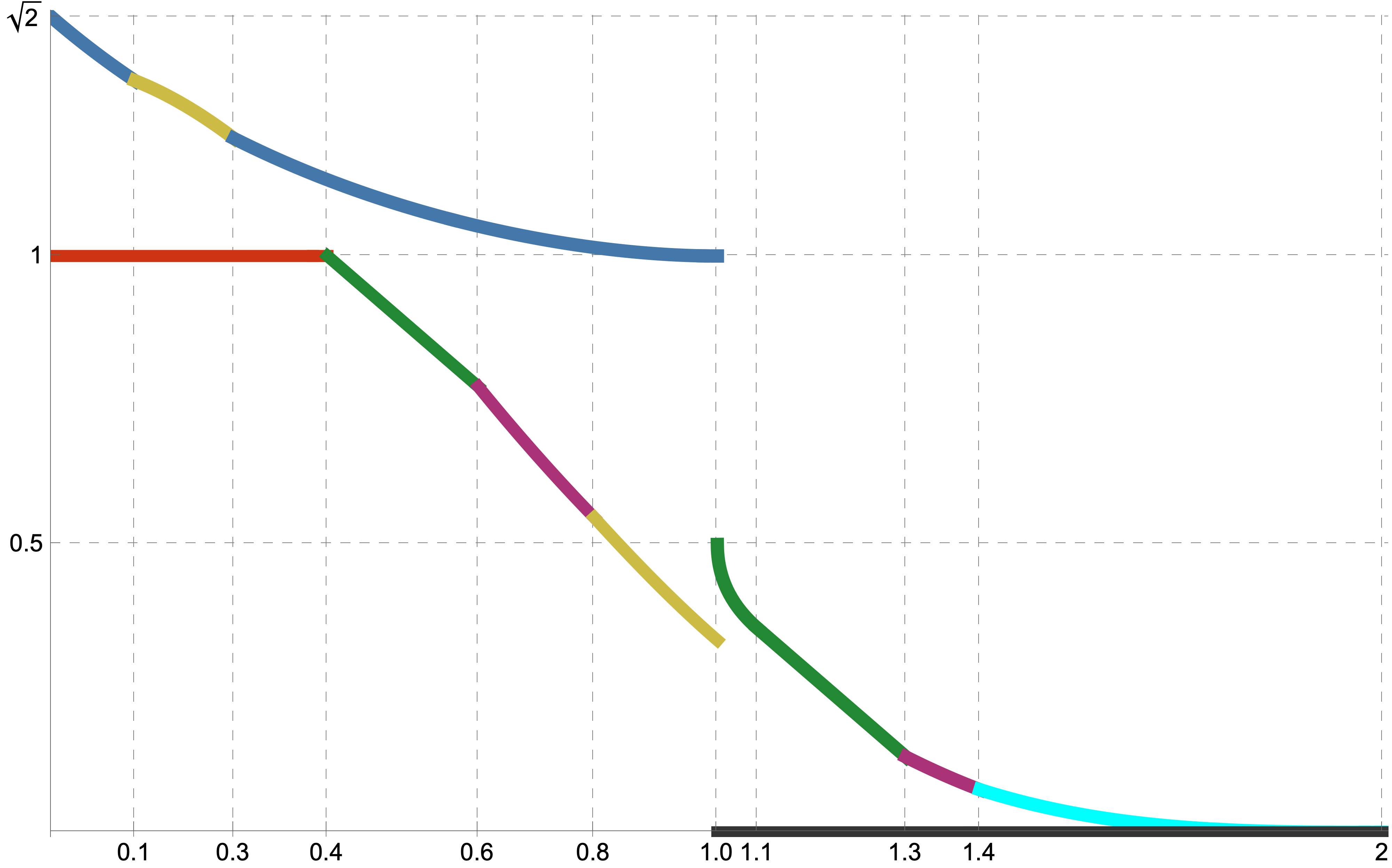}
    \includegraphics[width=0.48\linewidth]{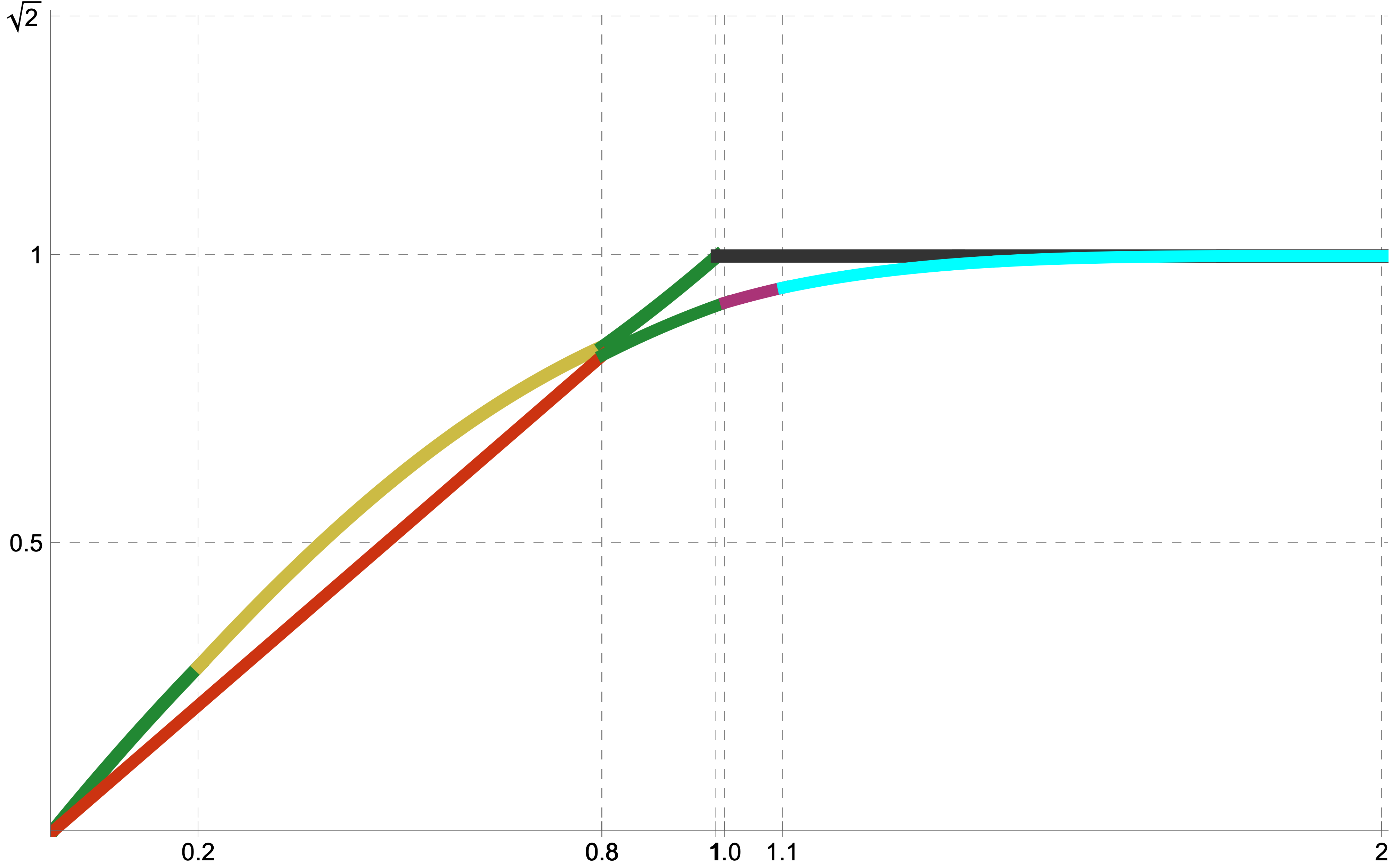}
    \caption{The graphs of the global maxima and minima of the volume of the slices (left) and slabs (right) of the unit cube in $\R^4$ at distance $\frac{t}{2}$, for $t\in[0,2]$. Colors correspond to different volume functions in our construction. See \Cref{subsec:extreme4cube} for details.}
    \label{fig:4cube_maxmin}
\end{figure}

\noindent
From this information, we can state the following conjectures, a four-dimensional counterpart to \cite{KK11}.

\begin{conjecture}\label{conj:extremal_slices}
    The extremal values of the volume of the slices of the cube of volume $1$ in $\R^4$ at distance $\frac{t}{2}$ are as follows (see \Cref{fig:4cube_maxmin} for a plot of their graph). On the left we report one extremizer, on the right the extremal value, in the center the domain for this pairing.\\
    Absolute minimum:
\[
\begin{array}{ccl}
(1,0,0,0) & 0 \leq t \leq t_3
& \displaystyle 1,
\\[0.8em]

\tfrac{1}{\sqrt{2}}(1,1,0,0) & t_3 \leq t \leq t_4
& \displaystyle \sqrt{2}-t,
\\[0.8em]

\tfrac{1}{\sqrt{3}}(1,1,1,0) & t_4 \leq t \leq t_5
& \displaystyle \tfrac{3\sqrt{3}}{8}
\left(t^2-2\sqrt{3}\,t+3\right),
\\[0.8em]

\tfrac{1}{2}(1,1,1,1) & t_5 \leq t \leq t_6
& \displaystyle t^3 - 2t^2 + \frac{4}{3},
\\[0.8em]

(1,0,0,0) & t_6 < t \leq 2
& \displaystyle 0.
\end{array}
\]
    Absolute maximum:
    \[
\begin{array}{ccl}
\tfrac{1}{2}(\sqrt{2-t^2}+t,\sqrt{2-t^2}-t,0,0) & 0 \leq t \leq t_1
& \displaystyle \frac{\sqrt{2-t^2}-t}{1-t^2},
\\[0.8em]

\tfrac{1}{2}(1,1,1,1) & t_1 \leq t \leq t_2
& \displaystyle t^3 - 2t^2 + \frac{4}{3},
\\[0.8em]

\tfrac{1}{2}(\sqrt{2-t^2}+t,\sqrt{2-t^2}-t,0,0) & t_2 \leq t \leq t_6
& \displaystyle \frac{\sqrt{2-t^2}-t}{1-t^2},
\\[0.8em]

\tfrac{1}{2}(t+\sqrt{t^2-1}+\xi(t), t+\sqrt{t^2-1}-\xi(t),0,0) & t_6 < t \leq t_7
& \displaystyle \frac{t-\sqrt{t^2-1}}{2},
\\[0.8em]

\tfrac{1}{\sqrt{2}}(1,1,0,0) & t_7 \leq t \leq t_8
& \displaystyle \sqrt{2}-t,
\\[0.8em]

\tfrac{1}{\sqrt{3}}(1,1,1,0) & t_8 \leq t \leq t_9
& \displaystyle \tfrac{3\sqrt{3}}{8}
\left(t^2-2\sqrt{3}\,t+3\right),
\\[0.8em]

\tfrac{1}{2}(1,1,1,1) & t_9 \leq t \leq 2
& \displaystyle \frac{(2-t)^3}{3},
\end{array}
\]
where $\xi(t)=\sqrt{-2 t^2-2 t\sqrt{t^2-1} +3}$, and the exact expressions for the $t_i$ can be obtained, by continuity as the value for which the left limit and right limit coincide. Up to 3-digit precision, we get 
\[
(t_1, \ldots, t_9) = (0.125,
0.274,
0.414, 0.641, 0.815, 1, 1.061, 1.284, 1.395).
\]
\end{conjecture}

\begin{conjecture}\label{conj:extremal_slabs}
The extremal values of the volumes of the slabs of the cube of volume $1$ in $\R^4$
are as follows. On the left we have one extremizer, on the right the
extremal value, and in the center the domain for this pairing.\\
Absolute minimum:
\[
\begin{array}{ccl}
(1,0,0,0) & 0 \leq t \leq \tau_2
& \displaystyle t,
\\[0.8em]

\tfrac{1}{\sqrt{2}}(1,1,0,0) & \tau_2 \leq t \leq \tau_5
& \displaystyle \sqrt{2}\,t-\frac{t^2}{2},
\\[0.8em]

\tfrac{1}{\sqrt{3}}(1,1,1,0) & \tau_5 \leq t \leq \tau_6
& \displaystyle 1+\frac{\sqrt{3}}{8}\left(t-\sqrt{3}\right)^3,
\\[0.8em]

\tfrac{1}{2}(1,1,1,1) & \tau_6 \leq t \leq 2
& \displaystyle 1-\frac{(2-t)^4}{12}.
\end{array}
\]
Absolute maximum:
\[
\begin{array}{ccl}
\tfrac{1}{\sqrt{2}}(1,1,0,0) & 0 \leq t \leq \tau_1
& \displaystyle \sqrt{2}\,t-\frac{t^2}{2},
\\[0.8em]

\tfrac{1}{2}(1,1,1,1) & \tau_1 \leq t \leq \tau_3
& \displaystyle \frac{t}{12}\left(16-8t^2+3t^3\right),
\\[0.8em]

\tfrac{1}{2t}(1+\sqrt{2t^2-1}, 1-\sqrt{2t^2-1}, 0, 0) & \tau_3 \leq t \leq \tau_4
& \displaystyle \frac{1+t^2}{2},
\\[0.8em]

(1,0,0,0) & \tau_4 \leq t \leq 2
& \displaystyle 1.
\end{array}
\]
Here the exact transition values are obtained, by continuity, as the relevant
values for which the left limit and right limit coincide. Up to $3$-digit precision we get
\[
(\tau_1,\ldots,\tau_6)=(0.222,0.828,0.829,1, 1.013,1.100).
\]
\end{conjecture}

\paragraph{Higher moments and critical-point computations in low dimensions.}
We complement the computational results with explicit piecewise rational functions for moments of order $M = 1,2,3,4$ for slices and slabs of the cube in dimensions $2,3,4$, and demonstrate how to extract critical points by algebraic methods.
Moreover, for dimension $2$ we prove explicit closed formulas for $\int_{\slicetxt(a,t,\|\cdot\|_\infty)} \sum_{i=1}^d x_i^M$ for arbitrary $M$, as well as for their critical points. For this purpose, let us introduce chambers
\begin{equation}\label{eq:chambers_square}
\begin{aligned}
C_{1,1} &= \{(a,t)\in S^1\times\R \mid 0 < a_2 < a_1,\; 0 < t < a_1-a_2\},\\
C_{1,2} &= \{(a,t)\in S^1\times\R \mid 0 < a_2 < a_1,\; a_1-a_2 < t < a_1+a_2\}.
\end{aligned}
\end{equation}
For details on the construction of these chambers, we refer the reader to Section \ref{sec:equiClass_vertexOrdering}. 

\begin{theorem}[Algebraic critical points of moments of square slices] \label{thm:2Dslicemoment}
    The critical points of the $M$-th moment of slices of the square, for fixed $t\in[0,\sqrt{2}]$, are exactly the points satisfying one of the following conditions:
    \[
    a_1-a_2 = t, \; \text{ or } \; a_1+a_2 = t, \; \text{ or } \; (a_1,a_2)=(1,0), \; \text{ or }
    \] 
    \begin{align*}
        \bigl(a_{2}^{2}+M\bigr)\,A_+(a_2,t) - t\,\bigl((M+2)\,a_{2}^{2} - 1\bigr)\,A_-(a_2,t) &= 0, \qquad  (a,t) \in C_{1,1}, \; M \text{ odd}, \\
        2 a_1^M a_2^2
    +(a_2^2+M)A_+(a_2,t)
    -t (a_2^2(M+2)-1)A_-(a_2,t) &= 0, \qquad  (a,t) \in C_{1,1}, \; M \text{ even}, \\
    a_1^M (t - a_1)^M p(a_1,t,M) + 
    - a_2^M (t - a_2)^M p(a_2,t,M) + a_1^M a_2^M (a_1^3 - a_2^3)  &= 0, \qquad  (a,t) \in C_{1,2},
    \end{align*}
    where 
    \begin{gather*}
    A_+(\alpha,t) = (t-\alpha)^{M} + (t+\alpha)^{M}, \qquad
    A_-(\alpha,t) = \frac{(t-\alpha)^{M} - (t+\alpha)^{M}}{\alpha}, \\
    p(\alpha,t,M) = \alpha^3 -t(M+2)\alpha^2+M\alpha+t.
    \end{gather*}
\end{theorem}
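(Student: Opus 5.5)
The plan is to compute the $M$-th moment of a square slice in closed form on each of the two chambers $C_{1,1}$ and $C_{1,2}$ of \eqref{eq:chambers_square}. Then we differentiate along the circle $S^1$ for fixed $t$ and clear denominators. Finally we treat the non-smooth and boundary loci separately.

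By the symmetries of $\hypercube^2$ (signed permutations of the coordinates) we may assume $a_1\ge a_2\ge 0$. Away from $a_1=a_2$, $a_2=0$ and the walls $t=a_1\pm a_2$, the pair $(a,t)$ then lies in the interior of $C_{1,1}$, of $C_{1,2}$, or in the region $t>a_1+a_2$ where the slice is empty. On the empty region the moment vanishes identically and contributes nothing.

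\emph{Chamber $C_{1,1}$.} Here $0<t<a_1-a_2$, so the line $H(a,t)$ meets the two horizontal edges $x_2=\pm\tfrac12$. We parametrize it by $x_2\in[-\tfrac12,\tfrac12]$ with $x_1=(t/2-a_2x_2)/a_1$; the arc-length factor is $\sqrt{1+a_2^2/a_1^2}=1/a_1$. The moment is therefore
\[
\frac1{a_1}\int_{-1/2}^{1/2}\Bigl(\bigl(\tfrac{t-2a_2x_2}{2a_1}\bigr)^M+x_2^M\Bigr)\,dx_2 ,
\]
and a direct integration turns it into
\[
\frac{(t+a_2)^{M+1}-(t-a_2)^{M+1}}{c_M\, a_1^{M+1}a_2}\;+\;\frac{1+(-1)^M}{(M+1)2^{M+1}a_1},
\]
with $c_M=2^{M+1}(M+1)$ up to sign conventions. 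The second term is where the parity of $M$ enters: it vanishes for odd $M$ and survives for even $M$.

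\emph{Chamber $C_{1,2}$.} Here $a_1-a_2<t<a_1+a_2$, so the segment joins a point on the edge $x_1=\tfrac12$ to a point on the edge $x_2=\tfrac12$. Parametrizing by $x_2$ between those two endpoints, the same computation gives a difference of terms built from $(t-a_1)^{M+1}$ and $(t-a_2)^{M+1}$, divided by powers of $a_1,a_2$. This is symmetric under $a_1\leftrightarrow a_2$, which explains the antisymmetric shape of the last equation in the statement.

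\emph{Differentiation.} Next, for fixed $t$ a smooth point $a=(a_1,a_2)$ of the moment $f$ restricted to $S^1$ is critical iff the tangential derivative vanishes:
\[
a_1\,\partial_{a_2}f-a_2\,\partial_{a_1}f=0,
\]
equivalently $\tfrac{d}{d\theta}f(\cos\theta,\sin\theta)=0$. We apply this to each closed form. We multiply by the appropriate monomial $a_1^{k}a_2^{\ell}$ to clear denominators, which is legitimate since $a_1,a_2>0$ in the interiors. We then use $a_1^2+a_2^2=1$ to reorganize.

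In $C_{1,1}$ we group the result in terms of $A_\pm(a_2,t)$, coming from the binomial symmetrization of $(t\pm a_2)^{M}$. The factor $a_1^{-(M+1)}$ contributes terms of the form $(a_2^2+M)$ after substituting $a_1^2=1-a_2^2$, and for even $M$ the extra term $2a_1^Ma_2^2$ comes from differentiating $1/a_1$. In $C_{1,2}$ the analogous computation, after clearing the denominator $a_1^{M+1}a_2^{M+1}$, should collect into $a_1^M(t-a_1)^Mp(a_1,t,M)-a_2^M(t-a_2)^Mp(a_2,t,M)+a_1^Ma_2^M(a_1^3-a_2^3)$.

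\emph{Boundary and non-smooth loci.} Finally we treat these points separately. On the walls $t=a_1\pm a_2$ the moment is only piecewise smooth, so, following the convention of the framework of \Cref{sec:methodology}, these points are declared critical. The point $(1,0)$ lies on the mirror $a_2=0$ of the signed-permutation symmetry, so the one-sided derivatives there are negatives of each other. I expect the derivative to vanish there, which is confirmed by the explicit formula in the limit $a_2\to0$ since $A_-$ is a polynomial in $a_2$. The diagonal $a_1=a_2$ must also be checked against the stated equations, or shown to be covered by a wall condition for $t\in[0,\sqrt2]$.

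The main obstacle is the algebraic simplification of the tangential derivative into exactly the stated normal forms, in particular in $C_{1,2}$. I would first do the computation symbolically for small $M$ to guess the grouping. I would then verify the general identity by expanding both sides as polynomials in $t$ with the binomial theorem. I would also make sure that no genuine critical points are lost when dividing by the cleared factors.
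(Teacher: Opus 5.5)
Your proposal is correct and follows the paper's proof essentially step for step. The paper gets the same closed forms on $C_{1,1}$ and $C_{1,2}$ (\Cref{prop:poly_moment_2D}, using \eqref{eqn:moment} on the segment rather than direct integration), computes the spherical gradient, clears denominators, substitutes $a_1^2+a_2^2=1$, and takes the walls $t=a_1\pm a_2$ as boundary candidates as in \Cref{sec:algebraic}; your scalar derivative $a_1\partial_{a_2}f-a_2\partial_{a_1}f$ explains why the paper finds that both gradient entries give the same equation. Two small remarks: the $C_{1,2}$ identity needs no binomial expansion, since it follows by multiplying by $a_1^{M+2}a_2^{M+2}$ and grouping the terms containing $(t-a_1)^M$ and $(t-a_2)^M$; and the diagonal $a_1=a_2$ is not covered by any wall condition for $t>0$, but it is a genuine smooth critical point for $0<t<\sqrt2$ (by the $a_1\leftrightarrow a_2$ symmetry of $f_2^{(M,2)}$), which the $C_{1,2}$ equation captures only if that equation is read on the closure of the chamber.
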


Although we are not able to provide closed formulas for arbitrary moments of slices or slabs of hypercubes of any higher dimension, we show that they share notable properties with the rational functions of the respective volumes. Here, by degree of a rational function we mean the difference of degrees of numerator and denominator. 

\begin{theorem}[Moments of slices and slabs] 
\label{thm:moment_formula}
    For any $d \in \N$, $M \in \mathbb Z_{\geq 0}$, the moments $\int_{\slicetxt(a,t,\|\cdot\|_\infty)} \sum_{i=1}^d x_i^M$ and $\int_{\slabtxt(a,t,\|\cdot\|_\infty)} \sum_{i=1}^d x_i^M$  of slices and slabs of the cube $B_\infty^d$ are piecewise rational functions whose pieces are supported on the same domains as the rational function expressions of the respective volume formulas of $\sliceinf$ and $\slabinf$. Moreover, in all computed cases $M=1,2,3,4$, the degrees of the rational functions for the $M$-th moment coincide with the degrees of the corresponding volume formulas.
\end{theorem}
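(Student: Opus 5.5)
The plan is to use the chamber decomposition from \cite{slices}, extended to slabs in \Cref{sec:methodology}, which governs the combinatorial type of $\sliceinf$ and $\slabinf$. The main point is that the chambers depend only on the cube and the pair $(a,t)$, and not on the integrand. Fix a maximal open chamber $C$ in the parameter space. On $C$ the set of vertices of $B_\infty^d$ lying on each side of $H(a,t)$ (and of $H(a,-t)$ for slabs) is constant, and so is the set of edges crossed by the hyperplane(s).

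\textbf{Piecewise rationality on the volume chambers.} On $C$, each vertex of the slice is the intersection of $H(a,t)$ with a fixed edge $[v,w]$ of the cube. It is therefore $v+\lambda(w-v)$ with $\lambda = (t/2-\langle a,v\rangle)/\langle a,w-v\rangle$, a rational function of $(a,t)$ with linear numerator and denominator. The face lattice of the slice is constant on $C$. Hence a fixed abstract triangulation, for instance a pulling triangulation defined combinatorially, works for the whole chamber. For slabs we do the same with the polytope cut out by the two parallel hyperplanes; its vertices are cube vertices together with the two families of edge intersection points.

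Next we apply the simplex integration formula of \cite{intsimplex}. The integral of a homogeneous polynomial of degree $M$ over a simplex $\Delta$ is $\vol(\Delta)$ times a symmetric polynomial in the vertex coordinates. Here $\vol(\Delta)$ is a determinant in the vertices; for slices it is normalized by the factor coming from $\|a\|=1$, which is constant on $S^{d-1}$. Each simplex contribution is therefore rational on $C$, and so is their finite sum. Consequently the $M$-th moment is given by a single rational function on each chamber where the volume is given by a single one. The pieces live on the same domains, or on a coarsening if two adjacent formulas happen to coincide; the same coarsening applies to the volume, so we take the domains to be the chambers. The one technical check is that the triangulation can be chosen uniformly on the chamber, which follows from the constancy of the combinatorial type, as in \cite{slices}.

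\textbf{Degrees for $M=1,2,3,4$.} This part is not a uniform proof. We would run the algorithm behind \Cref{thm:main} for $d=2,3,4$ and $M\le 4$, simplify each chamber's rational function, and compare numerator-minus-denominator degrees with the volume formulas, such as those in \Cref{thm:4dvol}. A heuristic explanation is that the rational factors $\lambda$ have degree $0$ in the grading where $a$ and $t$ have equal weight. Monomials of degree $M$ in the vertex coordinates are then polynomials in the $\lambda$'s and keep degree $0$, so the moment has the same degree as the volume.

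\textbf{Main obstacle.} Making that degree statement rigorous is the hard step. Cancellations after simplification, and the mix of constant cube coordinates with the degree-$0$ $\lambda$'s, could in principle change the reduced degree. For this reason the claim is stated only for the computed cases and is certified by exact symbolic computation.
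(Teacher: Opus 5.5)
Your proposal is correct and is essentially the paper's proof: chamberwise constancy of the combinatorial type (\Cref{lemma:summaryofBDLMpaper,lem:slab-chambers}), rational vertex parametrizations, a chamber-uniform triangulation (the paper uses the barycentric one rather than a pulling one) and the simplex formula \eqref{eqn:moment} give rational pieces on the volume chambers, while the degree equality for $M\le 4$ is, in the paper too, only read off from the computed formulas for $d\le 4$. In fact your homogeneity heuristic can be made rigorous: all vertex and barycenter coordinates are homogeneous of degree $0$ in $(a,t)$, so each moment is homogeneous of the same degree as the corresponding volume, and a homogeneous rational function in lowest terms has homogeneous numerator and denominator, so no cancellation can change the degree (for slices one only has to avoid simplifying inconsistently with $a_1^2+\dots+a_d^2=1$); on the other hand, your aside that a coarsening of the moment pieces carries over to the volume is false, since odd moments of slabs vanish identically by central symmetry (leaving their degree undefined), but taking the chambers themselves as the domains, as you do, sidesteps this.
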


For the domains on which each rational expression is valid, see Table~\ref{table:4d_kkchamber}. This result is far from a coincidence but reflects a deeper structural fact: all moments of both slices and slabs are governed by the same hyperplane arrangement, and it is precisely this shared combinatorial geometry that underlies our algebraic framework. 
Throughout this paper, we assume the reader is familiar with basic background on polytopes and hyperplanes. If needed, we refer the reader to \cite{Gruenbaum03:Polytopes}. 

\paragraph{Organization.}
\Cref{sec:methodology} presents the algebraic-combinatorial framework that guides the construction and computation of the rational expressions for volumes and moments of slices and slabs of a polyhedral norm ball.
In \Cref{sec:volumes} we present explicit formulas for the volumes and moments of slices and slabs of $\hypercube^d$ in $d=2,3,4$ and of order $M=0,1,2,3,4$.
Moreover, in \Cref{sec:2d} we give closed-form formulas for all moments of slices of $\hypercube^2$ for arbitrary $M\in \mathbb Z_{\geq 0}$ and their critical points.
All explicit formulas discussed in the paper are collected in the \hyperref[appendix]{Appendix}. 

%=============================================================
\section{Methodology} \label{sec:methodology}

In this section, we lay out the theoretical framework underlying our computations. In \Cref{sec:equiClass_vertexOrdering} we summarize the results of \cite{slices} for an algorithmic approach of computing volumes of slices of polytopes. 
In \Cref{sec:triangulation,sec:triangulation_moment,sec:thm1pf} we extend this framework to volumes of slabs, then to higher moments of slices and slabs of centrally symmetric polytopes, i.e., polyhedral norm balls. Finally, in \Cref{sec:algebraic} we illustrate our algebraic approach for the study of critical points of these functions.

\subsection{Equivalence Classes of Slices via Vertex Ordering}
\label{sec:equiClass_vertexOrdering}
We begin by introducing a construction that is central to our method. Intuitively, when slightly perturbing a hyperplane, the expectation is that its intersection with a fixed polytope has the same combinatorial type and that the two sections admit a common triangulation, therefore bearing the same volume and moment formulas.
We use this observation to group hyperplanes into equivalence classes, where in each class the combinatorial type for the associated slices or slabs is the same. We refine these equivalence classes by \emph{chambers}, whose definition is based on vertex orderings induced by linear functionals associated with the slicing hyperplanes. We now give a brief overview of this construction. For more details, we refer to \cite[Sections 2.2 and 3.2]{slices}. 

\begin{definition}[Sweep arrangement]
Let $P \subset \mathbb{R}^d$ be a full-dimensional polytope with vertex set $V \subset \R^d$.
The
\emph{sweep arrangement} associated to $P$ 
is
\[
\mathcal{R}(P) = \{ (v-w)^{\perp} \mid v \neq w  \in V\},
\]
where $v^{\perp} = \{ x \in \mathbb{R}^d \mid \langle x, v \rangle = 0 \}$ is the hyperplane orthogonal to $v$. We denote by $R_i$ the regions of $\mathcal{R}(P)$ restricted to the unit sphere, namely the connected components of $S^{d-1} \setminus \mathcal{R}(P)$.
\end{definition}

Given a generic vector $a \in S^{d-1}\subset \R^d$, the \emph{induced ordering} on $V$ is the unique labeling $V = \{v_1,\dots, v_n\}$ such that $\langle a, v_i \rangle < \langle a , v_{i+1} \rangle$ for all $i=1,\dots,n-1$.
The region $R_i$ of the sweep arrangement is by construction such that for all $a \in R_i$, the linear functional $\langle a, \cdot \rangle$ induces the same ordering $v_1, \ldots, v_n$ of the vertices of $P$.

\begin{definition}[Maximal Chamber, \cite{slices}] \label{def:maximalChamber}
Consider a full-dimensional polytope $P \subset \R^d$ with vertex set $V = \{v_1,\ldots,v_n\}$, where the vertices $v_i$ follow the same ordering as described above. For a fixed region $R_i$ of $S^{d-1} \setminus \mathcal{R}(P)$ and a vector $a \in R_i$, any index $j \in \lfloor \frac{n}{2} \rfloor$ defines an open interval $I_{i,j}(a) = \{t \in \R \mid  0 \leq \langle a, v_j \rangle < \frac{t}{2}  < \langle a, v_{j+1} \rangle\}$ for $j>0$, with $I_{i,0}(a) = \{t \in \R \mid  0 < \frac{t}{2} < \langle a, v_{1} \rangle\}$ where $v_1$ is the first vertex of $P$ such that $\langle a, v_{1} \rangle>0$. 
A \emph{maximal open slicing chamber}, or just \emph{maximal chamber}, is the polyhedron
\[
C_{i,j} = \{(a,t) \in S^{d-1}\times \R \mid a \in R_i, t \in I_{i,j}(a) \} \subset \R^{d+1}.
\]
\end{definition}

Regions control vertex orderings in direction space; chambers refine this by tracking which faces of $P$ are intersected as $t$ varies.
We say that two polytopes $Q_1,Q_2$ from the same chamber admit the \emph{same triangulation} if there exist triangulations $T_i$ of $Q_i$ and a bijection between the sets of all vertices in $T_1$ and those in $T_2$, which sends simplices to simplices.
We collect in the following lemma a few useful properties of the maximal chambers.
\begin{lemma}[{\cite[Subsections 2.2 and 3.2]{slices}}]\label{lemma:summaryofBDLMpaper}
A maximal chamber satisfies the following properties:
\begin{enumerate}[(i)]
    \item For all $(a,t) \in C_{i,j}$, the hyperplane $H(a,t) = \{ x \in \R^d \mid \langle a, x \rangle = \frac{t}{2} \}$
    intersects the same fixed set of edges of $P$.
    \item Consequently, for any $(a_1, t_{1}), (a_2,t_{2}) \in C_{i,j}$, the slices $P \cap H(a_{1},t_{1})$ and $P \cap H(a_{2},t_{2})$ 
    are combinatorially equivalent thus admit the same triangulation.
	 \item Each coordinate of a vertex of $P \cap H(a,t)$ is a rational function in the variables $(a,t)$. This rational function is valid for all $(a,t) \in C_{i,j}$.
	 \item Consequently, the volume of $P\cap H(a,t)$ and the moment $\int_{P \cap H(a,t)} \sum_{i=1}^d x_i^M$ are rational functions in the variables $(a,t)$, when restricted to $(a,t) \in C_{i,j}$.
\end{enumerate}
\end{lemma}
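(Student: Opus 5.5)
The argument follows the structure of \cite{slices}; the only new ingredient is tracking moments alongside volumes.

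Fix a maximal chamber $C_{i,j}$. For $(a,t)\in C_{i,j}$ the vector $a$ lies in a fixed open region $R_i$ of $S^{d-1}\setminus\mathcal{R}(P)$, so $\langle a,\cdot\rangle$ orders the vertices the same way, namely $\langle a,v_1\rangle<\dots<\langle a,v_n\rangle$. The condition $\langle a,v_j\rangle<t/2<\langle a,v_{j+1}\rangle$ then fixes the partition of $V$ into vertices strictly below and strictly above $H(a,t)$, and no vertex lies on the hyperplane.

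\emph{Part (i).} An edge $[v,w]$ of $P$ meets $H(a,t)$ exactly when its endpoints lie on opposite sides. Since the partition of $V$ is constant on $C_{i,j}$, the set $E_{i,j}$ of cut edges is constant as well.

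\emph{Part (iii).} For a cut edge $[v,w]$, the intersection point is
\[
x_{vw}(a,t)=v+\frac{t/2-\langle a,v\rangle}{\langle a,w-v\rangle}\,(w-v).
\]
Each coordinate is a rational function of $(a,t)$. Its denominator $\langle a,w-v\rangle$ does not vanish on $C_{i,j}$, because $(w-v)^\perp\in\mathcal{R}(P)$ and $a\notin\mathcal{R}(P)$.

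\emph{Part (ii).} The slice is $\conv\{x_{vw}:[v,w]\in E_{i,j}\}$, and no vertex of $P$ lies on $H(a,t)$. A set of these points spans a face of the slice iff the corresponding edges lie in a common face $F$ of $P$; in that case the face is $F\cap H$. This holds because a hyperplane avoiding all vertices meets each face $F$ of dimension $k$ either trivially or in a polytope of dimension $k-1$ whose vertices are the cut edges of $F$. Hence the face lattice of the slice is determined by $E_{i,j}$ and the face lattice of $P$, so it is constant on the chamber.

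To get a common triangulation, I would fix a combinatorial triangulation that depends only on the face lattice, for example the pulling triangulation with respect to a fixed order of $E_{i,j}$. Pulling triangulations are determined by the face lattice alone, so the same abstract simplices $\sigma=\{x_{e_0},\dots,x_{e_{d-1}}\}$ triangulate every slice in the chamber. This gives the bijection required in (ii).

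\emph{Part (iv).} The volume is $\sum_\sigma \vol_{d-1}(\sigma)$. Each $\vol_{d-1}(\sigma)$ can be written as $\frac{1}{(d-1)!}\,|\det(x_{e_1}-x_{e_0},\dots,x_{e_{d-1}}-x_{e_0},a)|$, using $\|a\|=1$. The determinant is rational in $(a,t)$. Its sign is constant on the connected chamber, since it never vanishes for nondegenerate simplices, so the absolute value can be dropped with a fixed sign.

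For the moment, apply the simplex integration formula of \cite{intsimplex}: the integral of a homogeneous polynomial of degree $M$ over a simplex equals its volume times a universal polynomial in the vertex coordinates. Here that is the sum over all multisets of $M$ vertices of the product of the linear form evaluated at them, scaled by a constant. Summing over simplices gives a rational function.

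The main obstacle is (ii), specifically proving that the triangulation can be chosen uniformly over the chamber. If $C_{i,j}$ might be disconnected, I would argue per connected component; however, $R_i$ is a spherical polyhedral cell and each $I_{i,j}(a)$ is an interval, so $C_{i,j}$ is connected.
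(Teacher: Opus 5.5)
Your argument is correct and follows essentially the route the paper relies on. The paper does not reprove this lemma (it cites \cite{slices}), but it sketches and, in the proof of \Cref{lem:slab-chambers}, runs the same chain: constant vertex partition $\Rightarrow$ constant cut edges $\Rightarrow$ rational vertices via \eqref{formula:symbolic_vertex} $\Rightarrow$ a triangulation determined by the face lattice $\Rightarrow$ \eqref{eqn:determinant_slice} together with \eqref{eqn:moment}. Your deviations are refinements rather than a new route: you use a pulling triangulation (no extra vertices) instead of the barycentric one, and you give a connectedness argument for dropping the absolute value of the determinant, which the paper only treats ``up to sign''. One wording fix: your ``iff'' for faces should read that the faces of the slice are exactly the sets $F\cap H(a,t)$ for faces $F$ of $P$ cut by $H(a,t)$, each with vertex set the cut edges contained in $F$; as literally stated it already fails for $F=P$.
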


As stated in Theorem~1 of \cite{slices}, given any polytope \(P \subset \mathbb{R}^d\), there are only finitely many slicing chambers of \(P\). 
\Cref{lemma:summaryofBDLMpaper} implies that the 
volume formulas and moment formulas for a family of slices restricted to a chamber $C_{i,j}$ are determined by the fixed set of edges that the corresponding hyperplanes intersect.
In order to obtain this set, it suffices to find a single representative $(a_{\mathrm{rep}}, t_{\mathrm{rep}}) \in C_{i,j}$. Given $a_{\mathrm{rep}}$, a canonical choice is $t_{\mathrm{rep}} = \frac{1}{2}(\langle a_{\mathrm{rep}}, \mathbf{v}_j \rangle+\langle a_{\mathrm{rep}}, \mathbf{v}_{j+1} \rangle)$. 
This is enough to compute the intersecting set of edges, the combinatorial type of the slices, the parametrization of the vertices, and finally the volume and moment formulas in the maximal chamber $C_{i,j}$. See \cite{Schrijver1986} for details.

Suppose we know that $H(a,t)$, where \((a,t) \in C_{i,j}\), intersects an edge \(e\) of polytope $P$. Let $v$ and $w$ be the two vertices of $e$. The parametrization of the vertices of a slice works as follows. The intersection point $\mathbf{x}_a(t) = e \cap H(a,t)$ is a vertex of the slice \(P \cap H(a,t)\). Its coordinates are given by
\begin{equation}
    \mathbf{x}_a(t)
    = \frac{\frac{t}{2}}{\langle a, w - v \rangle}
      (w - v)
      + \frac{\langle a, w \rangle v
     - \langle a, v \rangle w}
      {\langle a, w - v \rangle},
\label{formula:symbolic_vertex}
\end{equation}
which is a rational function in the variables $(a_1,\dots,a_d,t) \in S^{d-1}\times \R$.

These observations imply a simple algorithm to obtain the piecewise rational volume formula of $P \cap H(a,t)$ in terms of $a$ and $t$: First, for each maximal slicing chamber $C_{i,j}$ of the sweep arrangement $\mathcal R(P)$, compute a single $(a_{\mathrm{rep}},t_{\mathrm{rep}}) \in C_{i,j}$. From $P \cap H(a_{\mathrm{rep}},t_{\mathrm{rep}})$ we obtain the combinatorial type of all slices $P \cap H(a,t)$ for all $(a,t) \in C_{i,j}$, with which we can compute a common triangulation for this family of slices. The set of edges of $P$ which are intersected by $H(a_{\mathrm{rep}},t_{\mathrm{rep}})$ can be used to compute the parametrization of the vertices of $P \cap H(a,t)$ for all $(a,t) \in C_{i,j}$. Using this information, one can compute the rational function expression for the volume of a simplex in the common triangulation. Summing the volume of the simplices, we obtain the volume formulas for the entire family $P \cap H(a,t), (a,t) \in C_{i,j}$. Iterating over all maximal slicing chambers then yields the piecewise rational function.

%-------------------------------------------------------------------------------------

\subsection{From Slices to Slabs}
\label{sec:triangulation}

The method described in \Cref{sec:equiClass_vertexOrdering} can now be generalized from $(d-1)$-dimensional slices of a polytope to its $d$-dimensional slabs. We begin by discussing in some detail what the vertices of a slab are, and how to triangulate it in order to compute a volume expression of a slab within a single maximal slicing chamber. 

\paragraph{Vertices of a slab}
A slab has two types of vertices. The first type consists of vertices of the polyhedral norm ball \(P\) itself. These are easily identified by collecting all vertices \(v_i\) of $P$ such that $\lvert \langle a, v_i \rangle \rvert \le \tfrac{t}{2}$.
The second type consists of vertices created by the intersection of the hyperplanes \(H(a,t)\) and \(H(a,-t)\) with edges of \(P\).
\eqref{formula:symbolic_vertex} ensures that the second type of vertices has a rational expression in the parameters $(a,t)$. Together we have the set of vertices $V$, which are used directly in the volume computation. 

\paragraph{Triangulation computation}
In order to find the rational function expressions for the volume of $\slab$, we triangulate it and find the volume of each simplex. The triangulation we use in this paper is the \emph{barycentric triangulation}. This is a classical and widely used method in computational and combinatorial geometry \cite{Edelsbrunner87,Ziegler95,DeLoeraRambauSantos2010Triangulations}. Intuitively, after determining the faces of the slab via their supporting hyperplanes, each two-dimensional face is subdivided by introducing its barycenter and coning to the edges. $k$-dimensional faces are triangulated inductively by coning to the barycenters of all its $k-1$-dimensional bounding faces. This produces the canonical barycentric subdivision of the slab. 
We then compute the volume of the slab by adding the (signed) determinants of simplices in the triangulation. This approach allows us to compute volume and moments symbolically as piecewise rational functions in $(a,t)$. 

\begin{example}\label{ex:triangulation}
Consider the slab of the three-dimensional cube $\operatorname{slab}((\frac{2}{\sqrt{6}},\frac{1}{\sqrt{6}},\frac{1}{\sqrt{6}}),\frac{\sqrt{6}}{2},\hypercube^3)$, displayed in \Cref{fig:triangulation}, left, 
and on the right its barycentric triangulation. 
Each supporting hyperplane of the cube contributes one facet of the slab, and coning from a single interior point of each facet induces a barycentric triangulation. 
\end{example}
\begin{figure}[ht!]
\centering
    \includegraphics[width=0.4\linewidth]{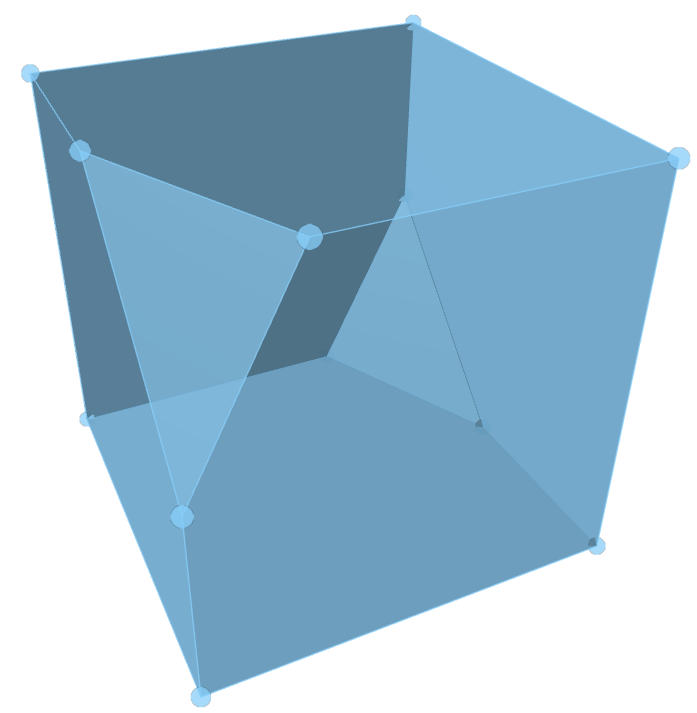}
    \qquad
    \includegraphics[width=0.4\linewidth]{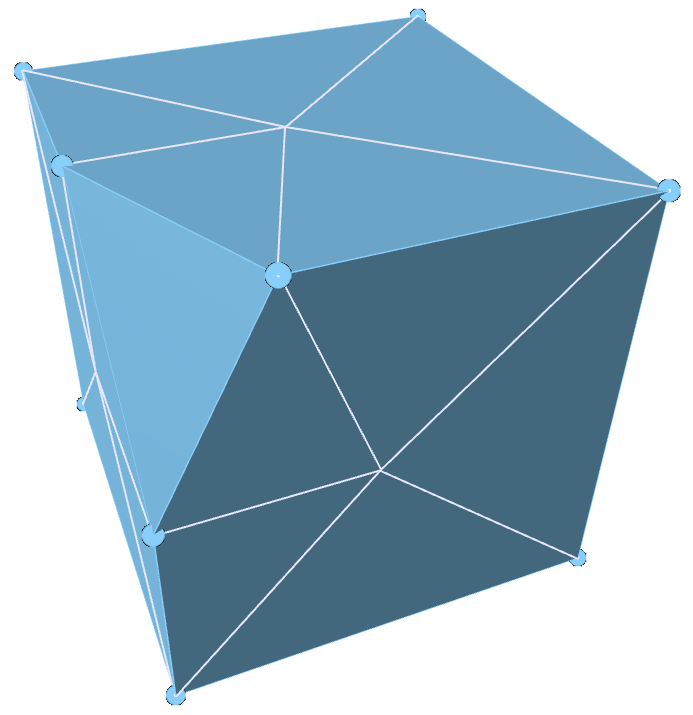}
\caption{Left: $\operatorname{slab}((\frac{2}{\sqrt{6}},\frac{1}{\sqrt{6}},\frac{1}{\sqrt{6}}),\frac{\sqrt{6}}{2},\hypercube^3)$. Right: its barycentric triangulation, from \Cref{ex:triangulation}.}
\label{fig:triangulation}
\end{figure}

\begin{lemma}\label{lem:slab-chambers}
   Let $P = B^d_{\|\cdot\|}$ be a polyhedral norm ball. Given two hyperplanes $H(a,t)$ and $H(a',t')$ that belong to the same maximal slicing chamber,
   the associated slabs $\slab$ and $\slabtxt(a',t',\|\cdot\|)$ have the same combinatorial type and the same parametrization of their vertices. Moreover, the volume of each parametrized slab is a rational function. As a consequence, by listing all possible maximal chambers, we recover the volume formulas of all slabs of $P$. 

By continuity of the volume function, the formulas associated with two adjacent chambers agree along their common boundary. Therefore, if a slicing direction lies on the boundary between two maximal slicing chambers, its volume may be computed using the formula from either chamber.
\end{lemma}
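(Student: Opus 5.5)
The plan is to use central symmetry of $P=B^d_{\|\cdot\|}$ to reduce the slab statement to \Cref{lemma:summaryofBDLMpaper} applied to the two bounding hyperplanes $H(a,t)$ and $H(a,-t)$. Since $P=-P$, the vertex set satisfies $V=-V$, and $x\mapsto -x$ maps $P\cap H(a,-t)$ onto $P\cap H(a,t)$. So if $(a,t)\in C_{i,j}$, the hyperplane $H(a,-t)$ meets exactly the edges antipodal to those met by $H(a,t)$. By \Cref{lemma:summaryofBDLMpaper}(i), this set of edges is constant on $C_{i,j}$. The same holds for the set of vertices of the first type, namely $\{v\in V: |\langle a,v\rangle|\le t/2\}$. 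In the induced ordering this set is $\{v_{n-j+1},\dots,v_{n-?}\}$, i.e.\ the vertices strictly between the two cuts. This is determined by $R_i$ (which fixes the ordering) and by $j$ (which fixes the position of $t/2$ between $\langle a,v_j\rangle$ and $\langle a,v_{j+1}\rangle$). Symmetry gives the matching position of $-t/2$.

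Next I will show that the face lattice of the slab is constant on $C_{i,j}$. Each face of $\slab$ has one of three forms: $F\cap\{|\langle a,x\rangle|\le t/2\}$ for a face $F$ of $P$, or $F\cap H(a,\pm t)$. Whether such a set is nonempty, and which vertices it contains, depends only on which vertices of $F$ lie in each of the three open regions $\langle a,x\rangle<-t/2$, between the cuts, and $>t/2$. This pattern is fixed on $C_{i,j}$, because no vertex value $\langle a,v\rangle$ crosses $\pm t/2$ inside the open chamber. Incidences are then fixed as well, so the combinatorial type is constant. Both vertex parametrizations are fixed: type-one vertices are constant, and type-two vertices are given by \eqref{formula:symbolic_vertex} with $t$ replaced by $\pm t$. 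The denominator $\langle a,w-v\rangle$ does not vanish on the chamber, because $a$ avoids $(v-w)^\perp$.

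For rationality, I fix the barycentric triangulation of one representative. Its simplices are indexed by flags of faces, and each barycenter is an average of vertices, hence rational in $(a,t)$. Summing $\frac{1}{d!}|\det|$ over the simplices gives the volume. The main obstacle is the absolute value. I will handle it by orienting each simplex consistently along its flag, so that the determinant's sign cannot change on the connected chamber: a simplex of a fixed combinatorial triangulation never degenerates while the face lattice is constant. Hence each term is $\pm$ a fixed rational function.

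Finally, the volume of $\slab$ is continuous in $(a,t)$: it is the integral of an indicator, and the boundary hyperplanes have measure zero. Two rational functions that agree with this continuous function on adjacent open chambers therefore agree, by taking limits, on the common boundary. So either formula may be used there, and iterating over all finitely many maximal chambers (Theorem~1 of \cite{slices}) recovers all slab volumes.
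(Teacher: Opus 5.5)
Your proposal is correct and follows essentially the same route as the paper. Central symmetry transfers the fixed edge set from $H(a,t)$ to $H(a,-t)$, the fixed vertex ordering fixes the vertices of $P$ in the strip, \eqref{formula:symbolic_vertex} gives rational vertices, and a barycentric triangulation with determinant volumes plus continuity of the volume finishes the argument. Your explicit face-lattice argument, the constant sign of each flag simplex's determinant on the connected chamber, and the measure-zero justification of continuity fill in what the paper leaves as ``up to sign'' and ``by continuity''. The index placeholder $v_{n-?}$ should simply describe the middle block of the induced ordering, which is symmetric about its centre.
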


\begin{proof}
    As mentioned in \Cref{sec:equiClass_vertexOrdering}, the number of maximal slicing chambers is finite. Fix such a maximal chamber $C_{i,j}\subset S^{d-1}\times\R$.
    By part (i) of \Cref{lemma:summaryofBDLMpaper}, the hyperplane $H(a, t)$ intersects the same set of edges of $P$ for any choice of $(a,t) \in C_{i,j}$. Central symmetry of $P$ implies the same conclusion for $H(a, -t)$.
    The parametrization \eqref{formula:symbolic_vertex} of a vertex obtained by intersecting an edge of $P$ with either $H(a, t)$ or $H(a, -t)$ is a rational function in $C_{i,j}$. Moreover, the set of vertices of $P$ contained in the strip $\{x \in \R^d \mid -\tfrac{t}{2} < \langle a,x \rangle < \tfrac{t}{2}\}$ between the two hyperplanes is fixed, since all $a \in R_i$ determine the same vertex ordering. Thus, the combinatorial type and the parametrization of the vertices of $\slab$ are fixed for all $(a,t) \in C_{i,j}$, in particular for $(a',t')$.
    
    Now fix a triangulation of the slabs from $C_{i,j}$, such that the set of vertices forming a simplex depends only on the face lattice of the slabs of $P$ in this chamber (as above, this can be done with barycentric triangulations). Similarly to the above, all vertices of the triangulation admit rational parameterizations. The volume of a slab can then be computed as follows. For each $d$-dimensional simplex with vertices $s_0,\dots,s_{d}$, up to sign the volume equals 
    \begin{equation}\label{eqn:determinant}
    \frac{1}{d!}\det(s_1 - s_0,\dots, s_d - s_0).
    \end{equation}

    As for slices, the simplices have codimension one, and therefore if their vertices are denoted by $s_1,\ldots,s_d$, their volume can be computed as
\begin{equation}
    \operatorname{vol}(\Delta)
    =
    \frac{1}{(d-1)!}
    \det \left(\left[ s_2 - s_1, s_3 - s_1, \dots, s_{d} - s_1, \frac{a}{\|a\|}\right]\right),
    \label{eqn:determinant_slice}
\end{equation}
where $a$ is any vector orthogonal to the affine span of the simplex. Since the affine span is by definition of the slice $H(a,t)$ and $a \in S^{d-1}$, the norm can be dropped. In both cases, the volume of one simplex is a rational function in the variables  $(a,t)$.

Since each of these vertices is a rational function in the variables $a_1,\dots,a_d,t$, the same holds for the volume of the simplex. The volume of $\slab$ is then the sum of the volumes of these parametric simplices, and thus a rational function as well.
We note that this rational function expression is not only valid on the open set $C_{i,j}$, but, by continuity of the volume function, also on the boundary of its Euclidean closure. 
\end{proof}

\Cref{lem:slab-chambers} implies that the number of rational function expressions for the volume of slabs is finite, and that the number of such expressions is bounded from above by the number of maximal slicing chambers $C_{i,j}$. Moreover, the proof of \Cref{lem:slab-chambers} reveals that small modifications of the algorithm described in \Cref{sec:equiClass_vertexOrdering} yield an algorithm to compute rational function expressions for the volume of slabs in terms of $a$ and $t$.

%---------------------------------------------------------------
\subsection{Volume and Higher Moment Formulas}
\label{sec:triangulation_moment}
To compute the moments of slices and slabs, our approach begins by triangulating them, as outlined in \Cref{sec:triangulation}. 
For each simplex in the triangulation, we compute rational functions for the moments using \cite[Lemma 8]{intsimplex}, which provides a closed-form expression for the integral of a power of a linear form over a simplex. Let $M \in \mathbb{Z}_{\geq 0}$, and let $\Delta = \conv(s_1, s_2,\dots, s_{n+1})$ be an $n$-dimensional simplex in $\mathbb{R}^d$. For any linear form $\ell$ on $\mathbb{R}^d$,
\begin{equation}
\int_{\Delta} \ell^M \, \mathrm{d}x
  = n!\,\mathrm{vol}(\Delta)\,
    \frac{M!}{(M+n)!}
    \sum_{\substack{k \in \mathbb{N}^{n+1}\\ |k|=M}}
    \langle \ell, s_1 \rangle^{k_1}
    \cdots
    \langle \ell, s_{n+1} \rangle^{k_{n+1}},
\label{eqn:moment}
\end{equation}
where we integrate with respect to the standard Lebesgue measure.
In the case of $M=0$, the integral in \Cref{eqn:moment} reduces to computing the volume of a simplex. 
We saw in the proof of \Cref{lem:slab-chambers} that the volume of a full-dimensional simplex can be expressed (up to sign) as a determinant (see \Cref{eqn:determinant_slice,eqn:determinant}). Therefore, we can use that formula for the full-dimensional simplices in the triangulation of the slices and slabs.

Recall from \Cref{formula:symbolic_vertex} that every vertex (of slices and slabs) is a rational function of $(a,t)\in C_{i,j}$ for each chamber. Thus, \Cref{eqn:moment} implies that for any slice or slab all the $M$-moments are piecewise rational functions in $(a,t)\in S^{d-1}\times\R$. 
\begin{remark}
    We stress that to make the moment formulas of the slices piecewise rational, one needs to require $a\in S^{d-1}$. Otherwise, for $a\in \R^d$, the formulas are merely semialgebraic functions, as they have a dependence on $\|a\|$. However, in the case of slabs, since there is no normal vector $a$ involved in the volume expression of the simplex, one could allow $a\in \R^d$. In order to make the narration coherent and the description of slices and slabs non-redundant, we require $a\in S^{d-1}$ in both cases. 
\end{remark}
We conclude this section by presenting a concrete computational example of a moment calculation of one specific family of slabs of the $2$-dimensional cube.
\begin{example}\label{ex:2cube_2moment}
    Assume $d=2$ and $M=2$. To compute all rational functions for the second moment of the slabs of the square, we begin by finding the maximal open slicing chambers. 
    Up to symmetry (permutations and reflections), there is exactly one possible vertex ordering, namely one region of the sweeping arrangement which we denote
    \[
    R_1 = \{(a_1,a_2)\in S^1 \mid a_1 > a_2 > 0\}.
    \]
    This induces the vertex ordering $v_1, v_4, v_2, v_3$, as shown in \Cref{fig:2d_slabs}, center. Then, the ranges for $t>0$ associated with this region are
    $I_{1,1}(a) = (0, 2 \langle v_2, a \rangle), 
    I_{1,2}(a) = (2 \langle v_2, a \rangle, 2\sqrt{2}).$
    Indeed, when $t/2 \in I_{1,1}(a)$, the slab is a parallelogram (shown in \Cref{fig:2d_slabs}, left) and when $t \in I_{1,2}(a)$, the slab is a hexagon (shown in \Cref{fig:2d_slabs}, right). Thus, to find formulas for all combinatorially different slabs in this region, we need to investigate the two associated slicing chambers $C_{1,1}$ and $C_{1,2}$, introduced already in \eqref{eq:chambers_square}. Let $a_{\mathrm{rep}} = \frac{1}{\sqrt{5}}(2,1) \in R_1$ be a representative vector, then $I_{1,1}(a_{\mathrm{rep}}) = (0,\frac{1}{\sqrt{5}})$ and $I_{1,2}(a_{\mathrm{rep}}) = (\frac{1}{\sqrt{5}}, \frac{3}{\sqrt{5}})$. After choosing two representatives $t_{\mathrm{rep}}$ for $t$, namely  $\frac{1}{2\sqrt{5}} \in I_{1,1}(a_{\mathrm{rep}})$ and $\frac{2}{\sqrt{5}} \in I_{1,2}(a_{\mathrm{rep}})$, we can find the edges that are intersected by $H(a_{\mathrm{rep}},t_{\mathrm{rep}})$, and with this the parametric expressions for the vertices of the two types of slabs. Since the procedure is analogous in both chambers, we only illustrate the hexagonal case where $(a_{\mathrm{rep}},t_{\mathrm{rep}})\in C_{1,2}$.
    \begin{figure}[ht]
    \centering
    \input{Images/2D_chambers}
    \caption{Center: hyperplane sweep across $B_{\|\cdot\|_\infty}$ in $\mathbb{R}^2$. Left: quadrilateral slab associated to $(a_{\mathrm{rep}},t_{\mathrm{rep}})\in C_{1,1}$. Right: hexagonal slab associated to $(a_{\mathrm{rep}},t_{\mathrm{rep}})\in C_{1,2}$. The rational functions describing the volumes of these two types of slabs are distinct and are computed in \Cref{ex:2cube_2moment}.}
    \label{fig:2d_slabs}
\end{figure}

    When $t \in I_{1,2}(a_{\mathrm{rep}})$, the slab has six vertices. Four of them arise from the intersection of the square with the sweeping hyperplanes at distance $\frac{t}{2}$ from the origin: these are $(\frac{1}{2} , \frac{t-a_1}{2 a_2})$, $(\frac{t-a_2}{2 a_1}, \frac{1}{2})$, $(-\frac{1}{2} , \frac{a_1-t}{2 a_2})$, $(\frac{a_2-t}{2 a_1}, -\frac{1}{2})$. Additionally, there are two vertices of the slab which are vertices of the square itself. To find these, we check the inequalities $|\langle a_{\mathrm{rep}},v_i \rangle| \leq t/2$ at all vertices $v_i$ of the square. Thus, the complete list of vertices of the hexagonal slab is 
    \[
    \left\lbrace (-\tfrac{1}{2},\tfrac{1}{2}), (\tfrac{1}{2},-\tfrac{1}{2}), (\tfrac{1}{2}, \tfrac{t-a_1}{2 a_2}), (\tfrac{t-a_2}{2 a_1}, \tfrac{1}{2}), (-\tfrac{1}{2} , \tfrac{a_1-t}{2 a_2}), (\tfrac{a_2-t}{2 a_1}, -\tfrac{1}{2})\right\rbrace.
    \]
    \begin{table}[b!]
        \centering
        \begin{tabular}{|c|c|}
        \hline
            equation & edge vertices \\
            \hline
            $x_1 = 1/2$ & $\{(\frac{1}{2},-\frac{1}{2}),(\frac{1}{2}, \frac{t-a_1}{2 a_2})\}$\\
            $x_1 = -1/2$ & $\{(-\frac{1}{2},\frac{1}{2}), (-\frac{1}{2} , \frac{a_1-t}{2 a_2})\}$ \\
            $x_2 = 1/2$ & $\{(-\frac{1}{2},\frac{1}{2}),(\frac{t-a_2}{2 a_1}, \frac{1}{2})\}$\\
            $x_2 = -1/2$ & $\{(\frac{1}{2},-\frac{1}{2}), (\frac{a_2-t}{2 a_1}, -\frac{1}{2})\}$\\
            $\langle a,x \rangle = t/2$ & $\{(\frac{1}{2}, \frac{t-a_1}{2 a_2}), (\frac{t-a_2}{2 a_1}, \frac{1}{2})\}$ \\
            $\langle a,x \rangle = -t/2$ & $\{(-\frac{1}{2} , \frac{a_1-t}{2 a_2}), (\frac{a_2-t}{2 a_1}, -\frac{1}{2})\}$ \\
            \hline
        \end{tabular}
        \caption{Facet-defining equalities of $\slabtxt(a_{\mathrm{rep}},t_{\mathrm{rep}},\|\cdot\|_\infty)$ and corresponding vertices, from \Cref{ex:2cube_2moment}.}
        \label{tab:2dslab_edges}
    \end{table}
    Next, we triangulate the slab by first identifying all its edges. \Cref{tab:2dslab_edges} shows the defining equations (left column) of all six edges, and their vertices (right column) as a function of $(a,t) \in C_{1,2}$. To each pair of vertices of a fixed edge, we add the barycenter of the slab, namely the origin $(0,0)$, to obtain a triangulation of the slab into six two-dimensional simplices. 
    Finally, we use \Cref{eqn:determinant,eqn:moment} to compute the second moment of each simplex and add them up to get the final rational function:
    \begin{align*}
        \int_{\slab}\  x_1^2+x_2^2 = &\tfrac{a_1 - a_2 + t}{32a_1}
    - \tfrac{a_1 - a_2 - t}{32a_2} + \tfrac{(a_1^2 + a_1 a_2 + a_2^2 - a_1 t - 2a_2 t + t^2)(a_1 - a_2 + t)}{96a_1^3} \\
    & - \tfrac{(a_1^2 + a_1 a_2 + a_2^2 - 2a_1 t - a_2 t + t^2)(a_1 - a_2 - t)}{96a_2^3} + \tfrac{(a_1^2 - a_1 a_2 + a_2^2 - 2a_1 t + a_2 t + t^2)(a_1 + a_2 - t)t}{96a_1 a_2^3} \\
    & + \tfrac{(a_1^2 - a_1 a_2 + a_2^2 + a_1 t - 2a_2 t + t^2)(a_1 + a_2 - t)t}{96a_1^3 a_2},
    \end{align*}
    for all $(a,t)$ in the topological closure of $C_{1,2}$.
\end{example}

%-----------------------------------------------------------------

\subsection{Proof of \Cref{thm:main}}
\label{sec:thm1pf}

\begin{proof}[Proof] 
First, we bound the number of maximal chambers associated to $\normball$. By \cite[Proposition 3.9]{slices}, at a fixed dimension $d$, the number of maximal chambers is polynomial in the number of vertices $n$ of the norm ball ($O(n^{2d+1} 2^d)$). Next, we show the time needed to compute one slab/slice in one chamber is polynomial in the input size.

Let $\normball$ be the unit ball of the polyhedral norm $\|\cdot\|$ and assume that it has $n$ vertices. \Cref{lem:slab-chambers} combined with \Cref{eqn:moment} proves that on each of the maximal chambers $C_{i,j}$, the moments of the slices and slabs of $\normball$ are parametric rational functions. It remains to show that our method runs in polynomial time in the input size, i.e., in the number of bits needed to describe the polytope ($n$ is bounded by the bit-size of the polytope, see \cite{Schrijver1986}), which is the slice or slab in consideration.

Consider \(\slice\) as a \((d-1)\)-dimensional polytope and \(\slab\) as a \(d\)-dimensional polytope. Since the arguments are identical in both cases, it suffices to establish the claim for one of them; the other follows. Thus, from now on, we focus on the polytope \(S = \slab\). Let $n$ denote the number of vertices of $S$.
Each $d$-dimensional simplex in the barycentric triangulation of $S$ has $d+1$ vertices. Our goal is to bound the number of simplices in the triangulation of $S$ using the Upper Bound Theorem. By definition of the barycentric triangulation, for any simplex $\nabla = \conv\{v_1,v_2, \dots, v_{d+1}\} \subset S$, there exists a chain (by inclusion) of faces $F_i$ of dimension $i$ of $S$, namely,
\[
    F_2 \;\subset\; F_3 \;\subset\; \cdots \;\subset\; F_d = S,
\]
such that $v_1,v_2 \in F_2$, $v_3 \in F_3$, \dots, $v_{d+1} \in F_d$ and the $v_i$ are the barycenters of $F_i$ for all $i>2$. We can therefore upper-bound the number of simplices using the product of the maximal number of faces of each dimension $i = 1,\dots, d$.
By the Upper Bound Theorem \cite{McMullen1970}, any $d$-dimensional polytope with $n$ vertices has at most as many $i$-dimensional faces as the cyclic polytope $\Delta(n,d)$. 
For $1 \le i \le d-1$, the face numbers $f_i$ of the cyclic polytope satisfy
\[
    f_i(\Delta(n,d)) = \frac{n - (d - 2\lfloor\tfrac{d}{2}\rfloor)(n-i-2)}{n-i-1}
    \sum_{j = 0}^n
    \binom{n - 1 - j}{i + 1 - j}
    \binom{n - i - 1}{2j - i - 1 + d - 2\lfloor\tfrac{d}{2}\rfloor},
\]
see \cite[Chapter 9.6]{Gruenbaum03:Polytopes}. For fixed $d$ and $i$, this is a polynomial in $n$.
Consequently, the number of simplices produced from the barycentric triangulation procedure is bounded from above by the product of the number of faces of the cyclic polytope:
\[
    \prod_{i=1}^{d-1} f_i(\Delta(n,d)).
\]
Note that, fixing $d$, this bound is polynomial in $n$, and thus it is bounded by the input size of the polyhedral norm ball $\normball$.

Since the moment of an individual simplex can be computed in polynomial time in its input bit-size \cite{intsimplex}, this bound implies that the total running time for computing any moment of one $\slice$ or $\slab$ is polynomial in the input size. Hence, moment computation for all slices and slabs of a given polyhedral norm is polynomial-time in the bit-size of the input.
\end{proof}

%-----------------------------------------------------------------

\subsection{Algebraic analysis to recover critical points of moments}\label{sec:algebraic}

Given the structure of the maximal open slicing chambers, the output of our algorithm for polyhedral norm balls consists of a list of rational functions, each associated to a maximal chamber $C_{i,j}$, $i=1,\ldots,m$ and $j=1,\ldots,n-1$. We are interested in computing the maxima and minima (or, more generally, the critical points) of the piecewise rational function $f$, which is rational in each $C_{i,j}$. There are two cases for a critical point $(a,t)$ of $f$:
\begin{enumerate}
\item $(a,t) \in C_{i,j}$ is a critical point of the rational function defining $f$ in this open chamber;
\item $(a,t) \in \partial (\overline{C_{i,j}})$.
\end{enumerate}
For the second type of critical points, we need to analyze the value of each rational function along the boundary of its chamber. However, for the first type of critical points, we can also perform a qualitative algebraic analysis. This is the focus of the current section. For more background on computation with algebraic ideals and varieties, we refer to \cite{CLO15:IdealsVarietiesAlgo}.

We are interested in the critical points at fixed values of $t$, and therefore we only consider derivatives with respect to the variable $a$. 
For a rational function $p$ on the sphere, the critical points are the zeros of its spherical gradient, namely
\[
\nabla_{S^{d-1}} p(a) = \nabla_{\mathbb{R}^d} p(a) - \langle \nabla_{\mathbb{R}^d} p(a), a \rangle \, a,
\]
where $\nabla_{\mathbb{R}^d}$ denotes the standard gradient in $\mathbb{R}^d$. This expression is a vector of rational functions. However, since we are interested in the zeros of the gradient, we can study, without loss of generality, the common zeros of the numerators of the gradient entries. Indeed, by construction, the denominators do not vanish in the interior of a chamber, so the zeros of the gradient coincide with the zeros of these numerators.

This information can be packaged in an ideal of polynomials. In particular, since we are only interested in the underlying set of (real) critical points, and not in multiplicities or scheme-theoretic structure, it is enough to pass to the radical of the ideal, which by definition collects all polynomials vanishing at the prescribed points. Therefore, for each maximal chamber $C_{i,j}$, the radical ideal in the variables $(a,t)\in S^{d-1}\times\R$ necessary for the critical points is the radical of the ideal generated by the numerators of the expressions appearing in $\nabla_{S^{d-1}} f_{\vert_{C_{i,j}}}$.

The dimension and degree of $\mathcal{I}_{i,j}$ provide information about the critical points: in the case of dimension $0$, the degree of the ideal equals the number of critical points over the complex numbers, and therefore gives an upper bound on the number of real critical points. For higher-dimensional ideals, one can instead study the structure of the corresponding solution sets.
Finally, we can \emph{decompose} $\mathcal{I}_{i,j}$ in order to divide the critical points into families sharing similar algebraic properties, for instance points satisfying the same minimal polynomial relations. 

This analysis can be performed over $\Q$ in several computer algebra software packages, primarily \texttt{Macaulay2} \cite{M2} or \texttt{Oscar.jl} \cite{OSCAR,OSCAR-book}.
In the remainder of this subsection, we present an exhaustive example in which we exploit these techniques. This computation is feasible in dimensions $2$, $3$, and partially in dimension $4$, see \Cref{rmk:nid}.

\begin{figure}[h]
    \centering
    \includegraphics[width=0.35\linewidth]{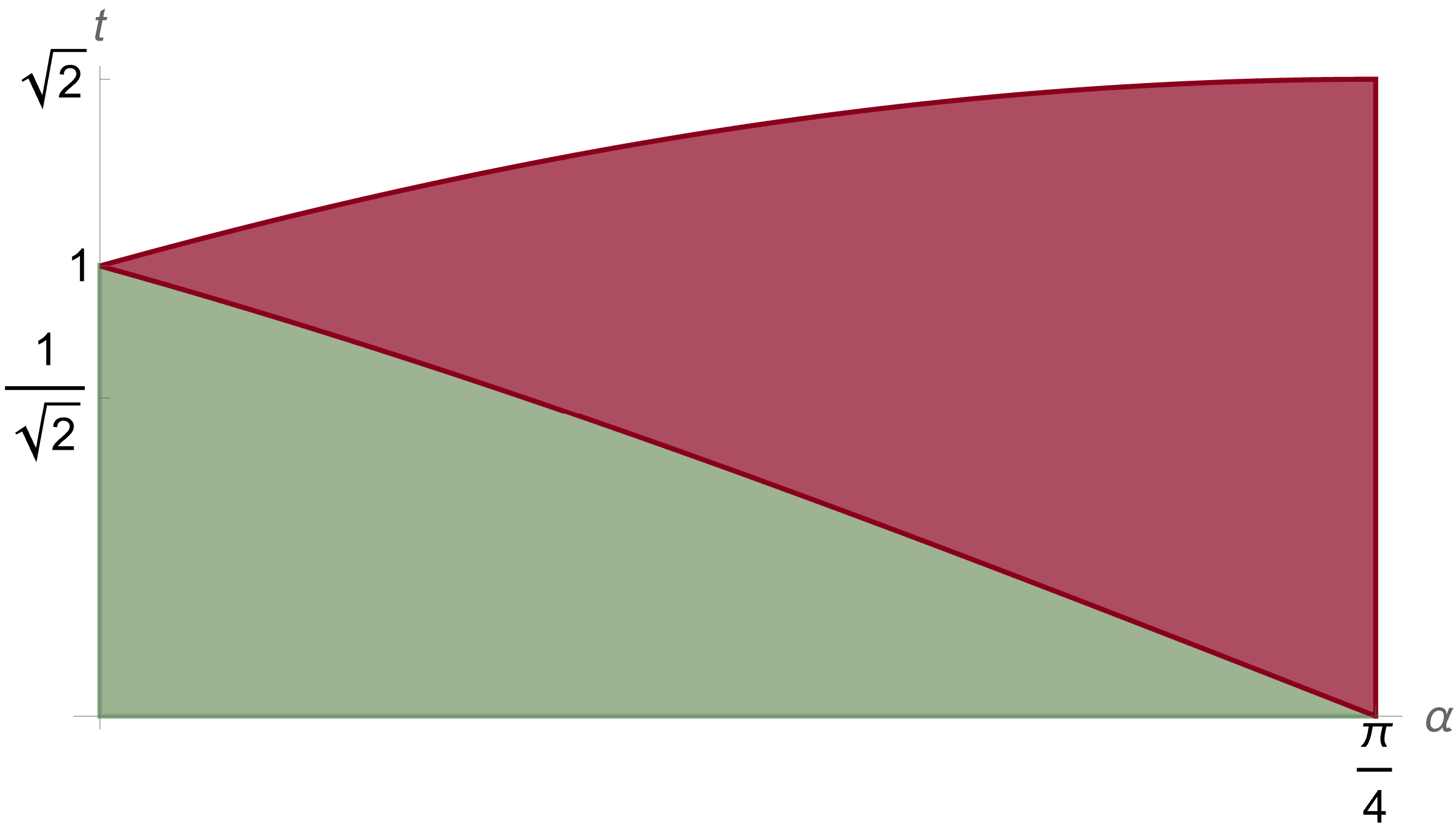}
    \qquad
    \includegraphics[width=0.45\linewidth]{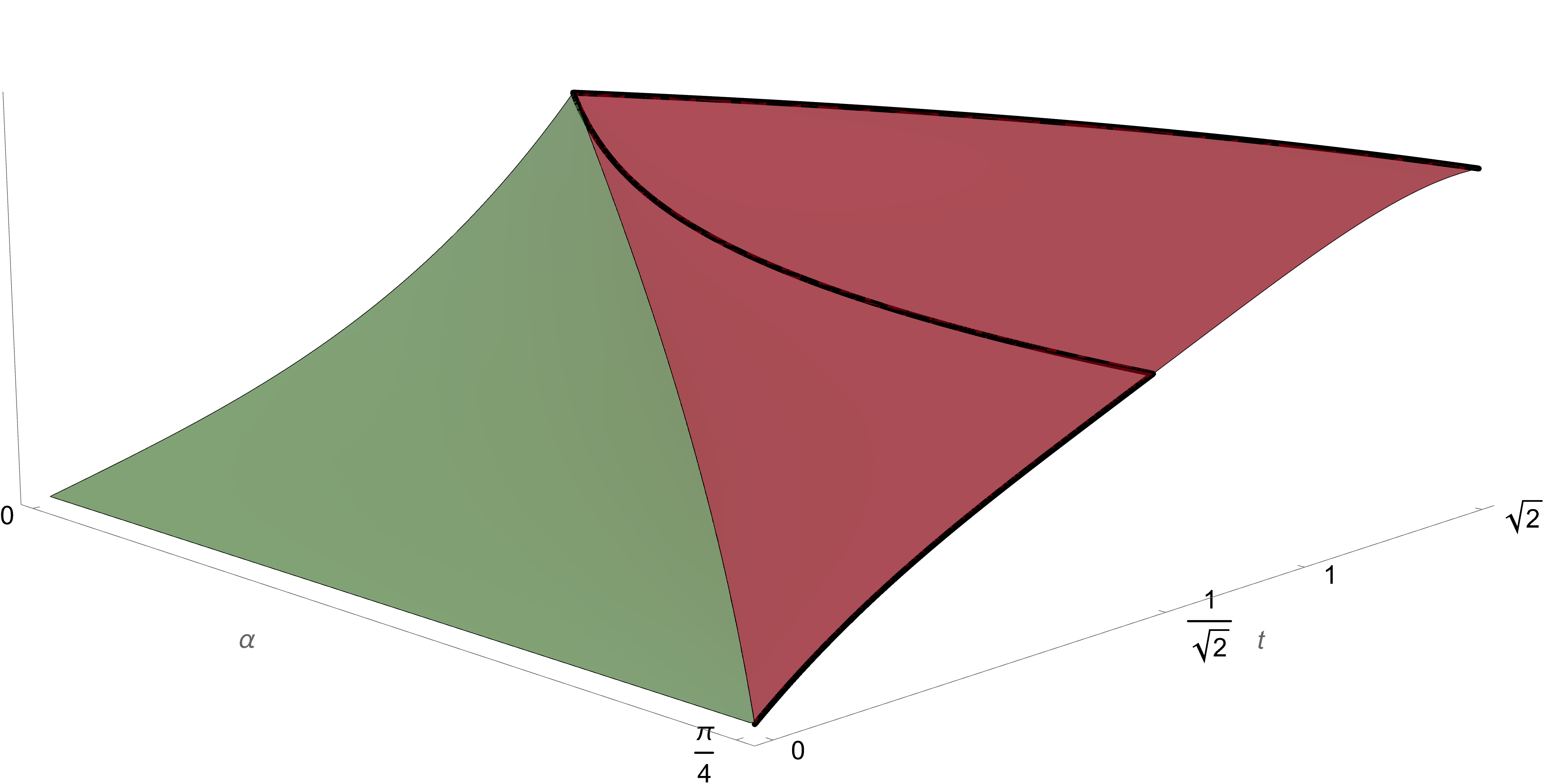}
    \caption{Left: The two fundamental open slicing chambers of the two-dimensional cube, displayed in coordinates $(\alpha,t)$, where $(a_1,a_2) = (\cos \alpha, \sin \alpha)$. Right: The piecewise rational function $f = \{f_{1}^{(2,2)},f_{2}^{(2,2)}\}$ of the second moment from \Cref{ex:algebraic_2cube_2moment}, together with its maxima (black curve) for each value of $t$.}
    \label{fig:moment2D_regions}
\end{figure} 
\begin{example}\label{ex:algebraic_2cube_2moment}
We continue \Cref{ex:2cube_2moment} for the square and the second moments of its slabs. There are two maximal open slicing chambers \eqref{eq:chambers_square}, denoted by $C_{1,1}$ (\Cref{fig:moment2D_regions}, left, green) and $C_{1,2}$ (\Cref{fig:moment2D_regions}, left, red). Hence, there are two relevant rational functions which, for $a\in S^{1}$, read
\[
    g_{1}^{(2,2)} = \tfrac{t^3 + t}{12 a_1^3}, \qquad
    g_{2}^{(2,2)} = \tfrac{
    - t^4
    + 4(a_1^3+a_2^3)t^3
    + (12a_1^2a_2^2-6)t^2
    + 4\bigl(a_1^5+a_2^5+a_1^2a_2^2(a_1+a_2)\bigr)t
    + (8a_1^3a_2^3-1)
    }{96 a_1^3a_2^3},
\]
respectively, where we use the notation of \Cref{appendix:2Dslabmoment}.
The associated radical ideals are
\begin{align*}
    \mathcal{I}_{1,1} &= \bigl(\ 
    a_1^2+a_2^2-1,\ a_2(t^3+t) \ 
    \bigr), \\
    \mathcal{I}_{1,2} &= \bigl( \ 
    a_1^2+a_2^2-1,\
    (2a_2^2-1)(t^4+1)
    +4\bigl(a_1(a_2^2-1)^2-a_2^5 \bigr)(t^3+t)
    +2(2a_2^2-1)(2a_2^4-2a_2^2+3)t^2
     \ \bigr).
\end{align*}
Both ideals have codimension $2$ in the three-dimensional space $\mathbb{C}^2\times\mathbb{C}$, and we interpret them as one-parameter families of critical points, where $t$ plays the role of the parameter.

The ideal $\mathcal{I}_{1,1}$ has degree $8$ and decomposes into four prime ideals:
\begin{gather*}
    \bigl(a_2,a_1-1\bigr), \qquad \bigl(a_2,a_1+1\bigr), \\
    \bigl(a_1^2+a_2^2-1,\, t\bigr), \qquad \bigl(a_1^2+a_2^2-1,\, t^2+1\bigr).
\end{gather*}
The first two ideals describe two families of critical points parametrized by $(\pm 1,0,t)$. The third ideal corresponds to the sphere $S^{1}\times \{0\}$, and the fourth one to the spheres $S^{1}\times \{\pm i\}$, where $i=\sqrt{-1}$. Notice that none of these points belong to the open slicing chamber $C_{1,1}$. Therefore, the interior of this chamber does not contribute any critical point.

Repeating the same procedure for the ideal $\mathcal{I}_{1,2}$, we find that its degree is $16$ and that it decomposes as
\begin{gather*}
    \bigl(a_1^2+a_2^2-1,\, a_1-a_2\bigr), \qquad
    \bigl(a_1^2+a_2^2-1,\, t-a_1-a_2\bigr), \\
    \bigl(a_1^2+a_2^2-1,\, t(a_1+a_2)-1\bigr), \qquad
    \bigl(a_1^2+a_2^2-1,\, t^2 -2t (a_2^3-a_1a_2^2 +a_1) + 1 \bigr).
\end{gather*}
These ideals describe, respectively, families of critical points of the form $(\pm(\tfrac{1}{\sqrt{2}}, \tfrac{1}{\sqrt{2}}), t)$,
$(a_1, a_2, a_1+a_2)$,
$(a_1, a_2, \tfrac{1}{a_1+a_2})$,
and $(a_1,a_2,\phi_{\pm}(a))$, where
\[
\phi_{\pm}(a)
= a_2^3 - a_1 a_2^2 + a_1
\pm \sqrt{\left(-a_1 a_2^2 + a_1 + a_2^3\right)^2 - 1}.
\]
The first two families lie on the boundary of $C_{1,2}$ and therefore do not contribute critical points in the interior of the chamber. The third family, on the other hand, lies in $C_{1,2}$ for all $a\in R_1$ (as in \Cref{def:maximalChamber}), hence for $t\in (\tfrac{1}{\sqrt{2}},1)$. This family is shown in \Cref{fig:moment2D_fixed_t}, center, for $t=0.8$. The entire curve of critical points in the interior of the slicing chamber corresponds to the black curve in \Cref{fig:moment2D_regions}, right, lying inside the red surface. Finally, the last family of points is never real in $C_{1,2}$.
\begin{figure}[h]
    \centering
    \includegraphics[width=0.32\linewidth]{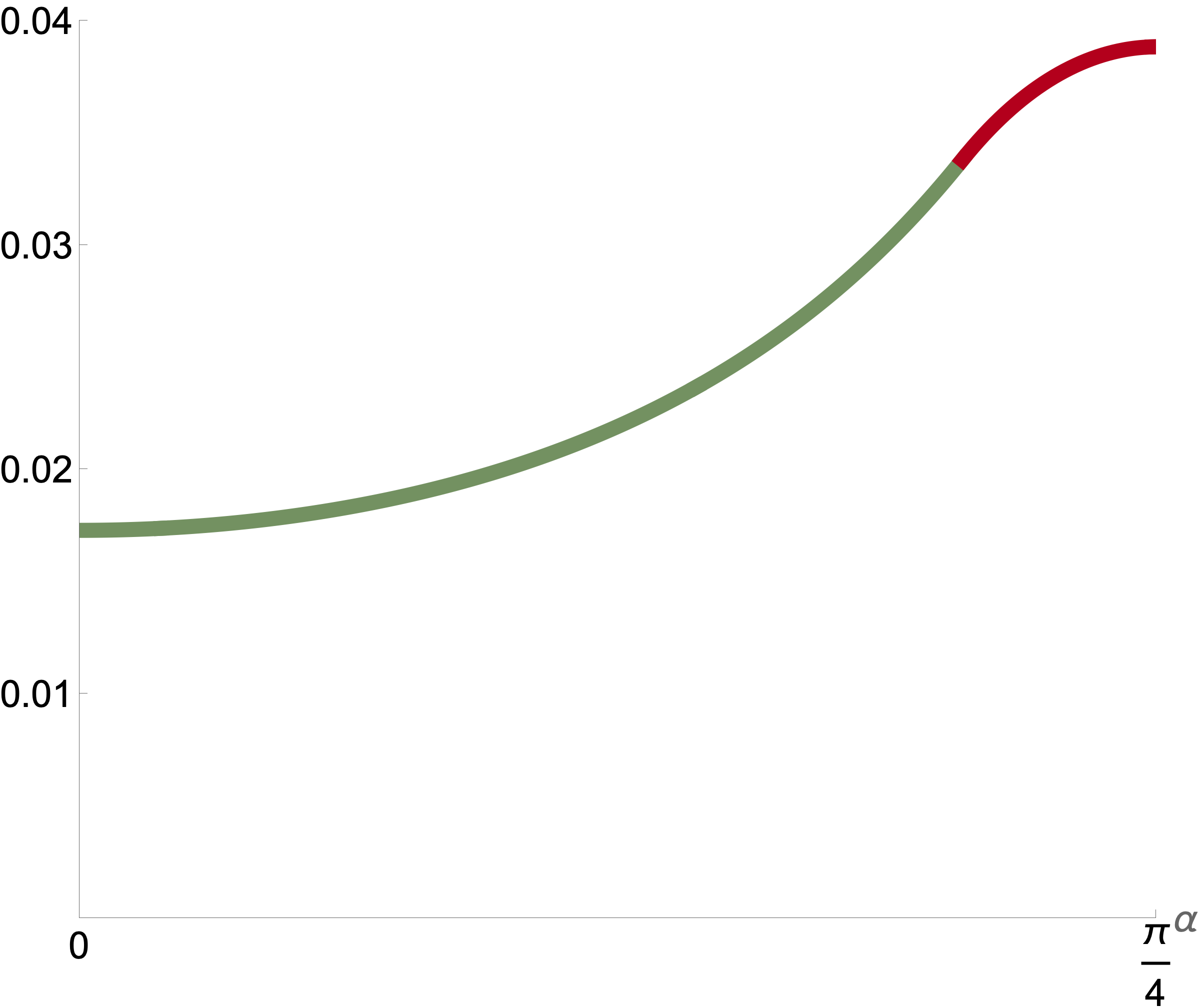}
    \includegraphics[width=0.32\linewidth]{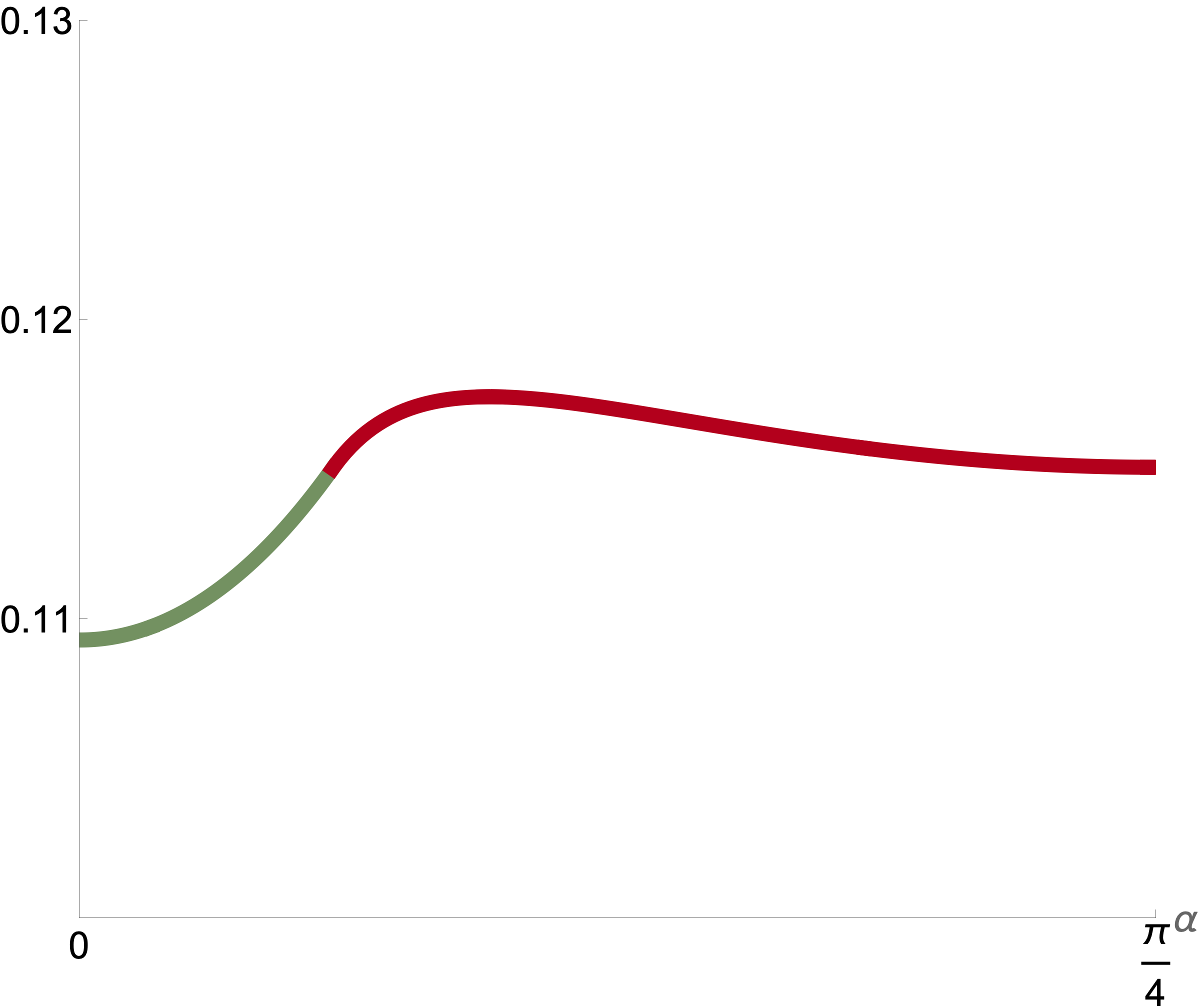}
    \includegraphics[width=0.32\linewidth]{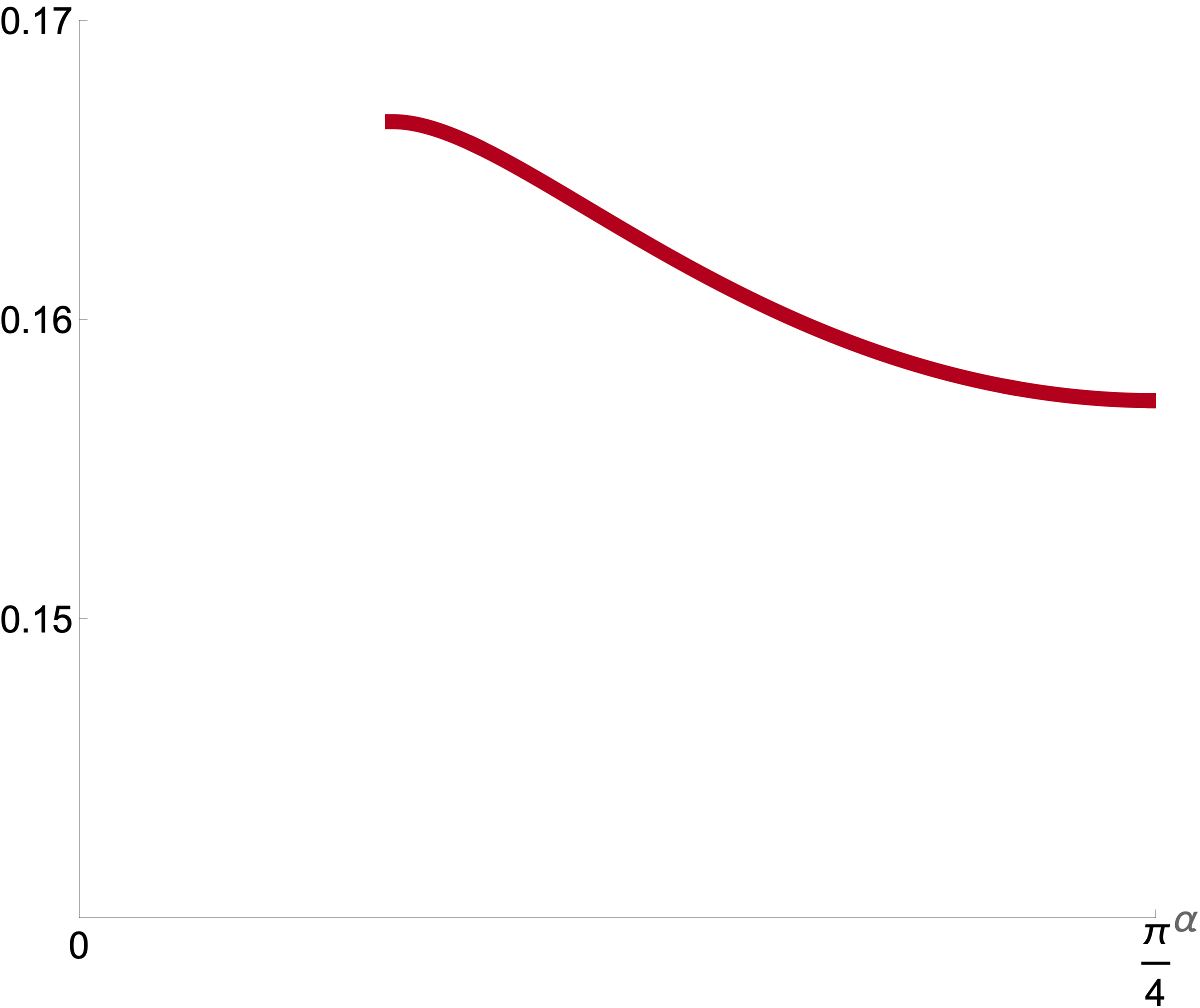}        
    \caption{The function $f(\alpha,t_{\mathrm{rep}}) = \{f_{1}^{(2,2)}(\alpha,t_{\mathrm{rep}}),f_{2}^{(2,2)}(\alpha,t_{\mathrm{rep}})\}$ of the second moment of the two-dimensional cube from \Cref{ex:algebraic_2cube_2moment}, where $(a_1,a_2) = (\cos \alpha, \sin \alpha)$ and $t_{\mathrm{rep}} = 0.2, 0.8, 1.2$ from left to right. The green part of each curve is its restriction to $C_{1,1}$, and the red part is its restriction to $C_{1,2}$. Only in the central figure is there a maximum in the interior of a slicing chamber.}
    \label{fig:moment2D_fixed_t}
\end{figure}
\end{example}

One can repeat analogous computations for all the rational functions produced by our algorithm. \Cref{tab:data_dim3} summarizes these computations for the rational functions describing the volume of slices of the three-dimensional cube $\hypercube^3$, which are listed explicitly in \Cref{appendix:3dslice}. The table was obtained using \texttt{Macaulay2}. 
For each rational function, we compute the dimension and degree of the associated radical ideal, as well as its decomposition into irreducible components. This decomposition reveals how many irreducible components the ideal has, corresponding to distinct families of critical points. For each such component, we also record its dimension and degree. Note that when components of different dimensions are present, the reported degree of the ideal refers to the degree of the top-dimensional component.
Analogous computations can be carried out for the remaining rational functions listed in the Appendix.
\begin{table}[h]
    \centering
    \begin{tabular}{|c|c|c|c|c|}
        \hline
        \textbf{Volume formulas} & \textbf{$\dim \mathcal{I}$} & \textbf{$\deg \mathcal{I}$} & \textbf{$\dim$ of irr. components} & \textbf{$\deg$ of irr. components} \\
        \hline
        $f_1$ & 1 & 2 & (1,1) & (1,1) \\
        \hline
        $f_2$ & 1 & 8 & (1,1,1) & (2,2,4) \\
        \hline
        $f_3$ & 2 & 2 & (2,1,1,1,1) & (2,2,4,4,4) \\
        \hline
        $f_4$ & 1 & 24 & (1,1,1,1,1) & (2,4,6,6,6) \\
        \hline
        $f_5$ & 1 & 20 & (1,1,1,1) & (2,2,8,8) \\
        \hline
    \end{tabular}
    \caption{$\mathcal{I}$ is the ideal of the critical points of the corresponding volume formula for the slices of $\hypercube^3$. The irreducible components are over $\Q$. The functions $f_i$ are given in \Cref{appendix:3dslice}.}
    \label{tab:data_dim3}
\end{table}

\begin{remark}\label{rmk:nid}
    For larger ideals (in terms of the number of variables or the degrees of the generators), some \texttt{Macaulay2} computations may fail to terminate. In such cases, one can instead exploit the \emph{numerical irreducible decomposition} implemented in \texttt{HomotopyContinuation.jl}~\cite{Breiding2018} to obtain information about the irreducible components of the ideals over $\mathbb{R}$. This decomposition may differ from the one over $\mathbb{Q}$. 
    Indeed, the ideal generated by the univariate polynomial $x^2-2$ is irreducible over the field of rational numbers, but it decomposes over the reals into the two prime ideals $(x-\sqrt{2})$ and $(x+\sqrt{2})$. For this reason, if the decomposition were performed over $\mathbb{R}$, the last two columns corresponding to $f_2$, $f_3$, and $f_4$ in \Cref{tab:data_dim3} would change. In particular, their irreducible one-dimensional components over $\mathbb{R}$ would be more numerous and have degrees $(1,1,2,4)$, $(1,1,4,4,4)$, and $(1,1,2,2,6,6,6)$, respectively.
\end{remark}
We leave as a possible direction for future research the problem of determining which of these components actually contribute to real critical points of the volume or moment functions in each chamber.

%===========================================================

\section{Slices and Slabs of Low-Dimensional Cubes}
\label{sec:volumes}

In this section, we present computational experiments investigating rational-function expressions for the volumes and moments of slices and slabs of the infinity norm ball, the $d$-dimensional cube $\hypercube^d$ for $d=2,3,4$ and moments $M=0,1,2,3,4$.
Moreover, in the case of slices of the $2$-dimensional cube, we give a complete theoretical description of all moments of slices, as well as their critical points, for all $M \in \mathbb Z_{\geq 0}$. All computations were carried out symbolically in \texttt{SageMath} \cite{sagemath}, following the method introduced in \Cref{sec:methodology}.
All rational functions are printed in the \hyperref[appendix]{Appendix}.
The code and rational functions in digital form are publicly available at \url{https://github.com/RainCamel/slab_of_the_poly_norms}.

Fixing a region $R_{i}\subset S^{d-1}$ and a representative direction \(a_{\mathrm{rep}} \in R_{i}\), one obtains \(2^{d-1}\) distinct $t$-ranges $I_{i,j}(a_{\mathrm{rep}})$. Let \(t_{\mathrm{rep}}\) be the midpoint of such an interval, as described in \Cref{sec:equiClass_vertexOrdering}. Since slices and slabs share the same maximal chamber $C_{i,j}$, each such case gives rise to one corresponding volume formula and a family of moment formulas, indexed by $M$. Due to the symmetry of the infinity norm ball, it suffices to compute all chambers containing points $a\in S^{d-1}$ satisfying $a_1 \geq a_2 \geq \dots \geq a_d \geq 0$. But even after this restriction, we will see 
that some rational function expressions are defined on multiple chambers. For example, the expressions $f_5$ and $g_5$ (see the next paragraph for their definitions) in \Cref{table:3d} appear in some interval of all six regions $R_i$. This happens because in all those chambers $C_{i,j}$, the hyperplane $H(a,t)$ separates one vertex of the cube from all the others, resulting in an equivalent family of slices or slabs. Other repetitions arise in a similar manner. 

\begin{remark}
We note that in highly symmetric situations, such as the unit cube, there is repetition of the rational function expressions on different chambers, up to permutation or reflection of variables. Ultimately it is the combinatorial type that controls the volume/moment formulas, but enumerating through all maximal chambers gives a systematic way to find all possible combinatorial types of slices or slabs of a given polyhedral norm ball.
\end{remark}

\noindent \textbf{Notation.} For notation, we use $f_k^{(M,d)}$ to denote the $k$-th rational expression of the $M$-th moment of the slice of the $d$-dimensional cube $\hypercube^d$, and use $g_k^{(M,d)}$ to denote the $k$-th rational expression of the $M$-th moment of the slab of the $d$-dimensional cube $\hypercube^d$. 

\noindent \textbf{Warning.} In this section, we sometimes omit the superscripts $(M,d)$ from the formulas listed in the tables when the context is clear, i.e. we use $f_k,g_k$ instead of  $f_k^{(M,2)},g_k^{(M,2)}$. When ambiguity arises, please refer to the table captions or closest explanation of dimension and moment.

%--------------------------------------------------------------

\subsection{$B_\infty^2$}\label{sec:2d}

We give explicit formulas for volumes, as well as higher moments of slices and slabs of $B_\infty^2$ for moment $M=1,2,3,4$. They appear in \Cref{appendix:2dvol,appendix:2Dmoment} and were computed with the method explained in \Cref{sec:methodology}. Restricted to $a_1 \geq a_2 \geq 0$, \Cref{table:2d_slice_formulas} follows K\"onig and Koldobsky \cite{KK11}, and identifies the largest range of $t$ as a function of $a$ in which a certain rational function is valid (see \Cref{table:2d_slice_formulas}). This table further indicates for each case which of the volume formulas given in \Cref{appendix:2dvol} are valid. The given volume formulas in dimension 2 agree with those in \cite{KK11}. We present them here for completeness. We extend their results by providing similar formulas for higher moments ($M=1,2,3,4$) in \Cref{appendix:2Dmoment}. By construction, the formulas for higher moments are defined on the same regions as the volumes. \Cref{table:2d_slice_formulas} is therefore also a valid table for the moment formulas in \Cref{appendix:2Dmoment}.

\begin{table}[H]
\centering
\renewcommand{\arraystretch}{1.15}
\begin{tabular}{|l|c|}
\hline
\textbf{t-range} & \textbf{Formulas} \\
\hline

$0 \le t \le a_1-a_2$
& $f_1, g_1$ \\ \hline

$a_1 - a_2 \le t \le a_1+a_2$
& $f_2, g_2$ \\ \hline
\end{tabular}

\caption{In the two-dimensional setting, where the polyhedral norm ball is the unit square centered at the origin, this table records the piecewise rational formulas for the slice/slab volume/moment as functions of $(a,t)$. For each interval of $t$ shown in the left column, the corresponding formulas in the right column are valid for that interval. Note that here we use $f_i^{(M,2)}$ and $g_i^{(M,2)}$ to denote, respectively, the $i$-th rational expressions for the $M$-th moment of the slice and slab of the square. For the actual formulas, please check \Cref{appendix:2dvol,appendix:2Dmoment}.}
\label{table:2d_slice_formulas}
\end{table}

Focusing on slices, we now establish closed-form formulas for the $M$-th moments of slices of the two-dimensional cube, valid for all $M \in \mathbb{Z}_{\ge 0}$. This includes the volume as a special case when $M = 0$. Furthermore, we describe their critical points explicitly. 
These closed formulas are based on the fact that no triangulation is necessary
for line segments, and the integral reduces to evaluating elementary one-dimensional polynomial expressions along explicitly parametrized vertices. As a consequence, the derivation depends only on the algebraic form of the vertices of the line segment and not on any combinatorial decomposition, allowing the same method to extend to the family of slices of any polyhedral norm ball in dimension $2$. 

\begin{proposition}\label{prop:poly_moment_2D}
    Consider the fundamental region $R_1 = \{(a_1,a_2)\in S^1 \mid a_1 > a_2 > 0\}$. The $M$-th moment of the slices of the square $B_\infty^2$, for $M\in \mathbb Z_{\geq 0}$, is the piecewise rational function
   \begin{align*}
    f_1^{(M,2)} &= \frac{1}{(M+1) 2^{M}} \cdot 
    \begin{cases}
        \frac{1}{a_1} + \frac{(t+a_2)^{M+1} - (t-a_2)^{M+1}}{2 a_1^{M+1} a_2} &  M \text{ even},\\
        \frac{(t+a_2)^{M+1} - (t-a_2)^{M+1}}{2 a_1^{M+1} a_2} & M \text{ odd},
    \end{cases}
     && a \in R_1, \; 0\leq t \leq a_1-a_2, \\
    f_2^{(M,2)} &= \frac{1}{(M+1) 2^{M+1}} \left( \frac{1}{a_1} + \frac{1}{a_2} - \frac{(t-a_1)^{M+1}}{a_1 a_2^{M+1}} - \frac{(t-a_2)^{M+1}}{a_1^{M+1} a_2} \right),
    && a \in R_1, \; a_1-a_2 \leq t \leq a_1+a_2.
    \end{align*}
\end{proposition}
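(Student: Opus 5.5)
The plan is to compute the moment by hand. A slice of $B_\infty^2$ is a line segment, so no triangulation is needed and the integral is one-dimensional. For $(a,t)$ in a maximal chamber $C_{1,j}$, \Cref{lemma:summaryofBDLMpaper} guarantees that $H(a,t)$ meets a fixed pair of edges of the square. I will therefore:
\begin{enumerate}[(i)]
\item identify the two endpoints of the segment using \eqref{formula:symbolic_vertex};
\item integrate $x_1^M+x_2^M$ against the one-dimensional Lebesgue (arc-length) measure on the segment.
\end{enumerate}
Validity on the closed $t$-ranges, including the endpoints $t=a_1-a_2$ and $t=a_1+a_2$, then follows from continuity, exactly as argued at the end of the proof of \Cref{lem:slab-chambers}.

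The key computational device replaces arc length by a coordinate. On the line $a_1x_1+a_2x_2=\tfrac t2$ with $a\in S^1$ and $a_1>a_2>0$, arc length $s$ satisfies $|dx_1| = a_2\,ds$ and $|dx_2| = a_1\,ds$. Hence
\[
\int_{\text{seg}} x_1^M\,ds = \frac{1}{a_2}\int_{\alpha}^{\beta} x_1^M\,dx_1 = \frac{\beta^{M+1}-\alpha^{M+1}}{(M+1)\,a_2},
\]
where $\alpha\le\beta$ is the range of $x_1$ over the segment. The analogous formula holds for $x_2$, with $1/a_1$ and the range of $x_2$. Taking each range with $\alpha\le\beta$ avoids any orientation or sign issues. (Alternatively one could apply \eqref{eqn:moment} with $n=1$, but the substitution is cleaner.)

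Next I locate the endpoints. The vertex values are $\langle a,v\rangle = \pm\tfrac{a_1+a_2}{2}$ and $\pm\tfrac{a_1-a_2}{2}$.
\begin{itemize}
\item In $C_{1,1}$, where $0\le t\le a_1-a_2$, the line crosses the edges $x_2=\pm\tfrac12$. The endpoints are $\bigl(\tfrac{t\mp a_2}{2a_1},\pm\tfrac12\bigr)$.
\begin{itemize}
\item The $x_2$-integral is $\tfrac1{a_1}\int_{-1/2}^{1/2}x_2^M\,dx_2$. This equals $\tfrac{1}{(M+1)2^M a_1}$ for $M$ even and $0$ for $M$ odd, which produces the parity split in $f_1^{(M,2)}$.
\item The $x_1$-integral over $\bigl[\tfrac{t-a_2}{2a_1},\tfrac{t+a_2}{2a_1}\bigr]$ gives $\tfrac{(t+a_2)^{M+1}-(t-a_2)^{M+1}}{(M+1)2^{M+1}a_1^{M+1}a_2}$.
\end{itemize}
\item In $C_{1,2}$, where $a_1-a_2\le t\le a_1+a_2$, the line crosses $x_1=\tfrac12$ at $x_2=\tfrac{t-a_1}{2a_2}$ and $x_2=\tfrac12$ at $x_1=\tfrac{t-a_2}{2a_1}$. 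The ranges are $x_1\in\bigl[\tfrac{t-a_2}{2a_1},\tfrac12\bigr]$ and $x_2\in\bigl[\tfrac{t-a_1}{2a_2},\tfrac12\bigr]$. Each integral gives one $\tfrac{1}{2^{M+1}}$ term minus one $(t-a_i)^{M+1}$ term. Summing them yields $f_2^{(M,2)}$ after factoring out $\tfrac{1}{(M+1)2^{M+1}}$.
\end{itemize}

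The only real obstacle is bookkeeping: I must confirm which edges are cut in each chamber, and check that the endpoint coordinates are ordered as claimed. For example, in $C_{1,2}$ the values $\tfrac{t-a_2}{2a_1}\le\tfrac12$ and $\tfrac{t-a_1}{2a_2}\le \tfrac12$ follow from $t\le a_1+a_2$ together with $a_1\ge a_2$. Getting this ordering right is what ensures positive lengths and correct signs. I would also sanity-check the result at $M=0$ against the known volume formulas of \cite{KK11}.
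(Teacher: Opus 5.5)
Your proposal is correct and takes essentially the paper's route: the same two chambers, the same segment endpoints, and a coordinate-by-coordinate one-dimensional integration of $x_1^M$ and $x_2^M$. The only difference is cosmetic. The paper applies \eqref{eqn:moment} with $n=1$, i.e.\ segment length times $\frac{1}{M+1}\sum_{k=0}^M \beta^k\alpha^{M-k}$, and then telescopes the geometric sum. Your substitution $ds = dx_1/a_2$ (resp.\ $dx_2/a_1$) gives the telescoped form $\frac{\beta^{M+1}-\alpha^{M+1}}{(M+1)a_2}$ directly.
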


\begin{proof}
We already described the maximal slicing chambers of the square $B_\infty^2$ (see \Cref{ex:2cube_2moment} for details) as in \eqref{eq:chambers_square}.
The chamber $C_{1,1}$ corresponds to slices of the square (namely segments) with vertices $v_1 = ( \frac{t+a_2}{2 a_1}, -\frac{1}{2})$, $v_2 = ( \frac{t-a_2}{2 a_1}, \frac{1}{2})$ on the top and bottom edges of $B_\infty^2$, whereas $C_{1,2}$ corresponds to segments with vertices $v_1 = ( \frac{1}{2} , \frac{t-a_1}{2 a_2})$, $v_2 = (\frac{t-a_2}{2 a_1}, \frac{1}{2})$ on the top and right edges of $B_\infty^2$. See 
\Cref{fig:slices_square} left and right, respectively.
\begin{figure}[h]
    \centering
    \input{Images/2D_slices}
    \caption{A representative for each maximal slicing chamber of the square, up to symmetry. Left: $(a,t)\in C_{1,1}$. Right: $(a,t)\in C_{1,2}$.}
    \label{fig:slices_square}
\end{figure}

We can use \eqref{eqn:moment} to compute the $M$-th moment in each chamber. We start with the chamber $C_{1,1}$, coordinate by coordinate:
\begin{align*}
    \int_{[v_1,v_2]} x_1^M \mathrm{d}x &= \frac{1}{(M+1)} \frac{1}{a_1} \sum_{k=0}^M \left( \frac{t+a_2}{2 a_1} \right)^k \left( \frac{t-a_2}{2 a_1} \right)^{M-k} =\\ 
    &= \frac{1}{(M+1) 2^M}\frac{\sum_{k=0}^M \left( t+a_2 \right)^k \left( t-a_2 \right)^{M-k}}{a_1^{M+1}}  \\
    &= \frac{1}{(M+1)2^{M+1}}\frac{\left( t+a_2 \right)^{M+1} - \left( t-a_2 \right)^{M+1}}{a_1^{M+1} a_2}, \\
    \int_{[v_1,v_2]} x_2^M \mathrm{d}x &= \frac{1}{(M+1)} \frac{1}{a_1} \sum_{k=0}^M \left( -\frac{1}{2} \right)^k \left( \frac{1}{2} \right)^{M-k} 
    = \begin{cases}
        \frac{1}{(M+1) 2^M} \frac{1}{a_1} & M \text{ even}, \\
        0 & M \text{ odd}.
    \end{cases}
\end{align*}
Therefore, in this chamber the $M$-th moment coincides with the function $f_1^{(M,2)}$ in the statement.
We do the same computation for $C_{1,2}$ and we obtain
\begin{align*}
    \int_{[v_1,v_2]} x_1^M \mathrm{d}x &= 
    \frac{1}{(M+1)} \frac{a_1+a_2-t}{2 a_1 a_2} \sum_{k=0}^M \left( \frac{t-a_2}{2 a_1} \right)^k \left( \frac{1}{2} \right)^{M-k}  \\
    % &= \frac{a_1+a_2-t}{M! (M+1)! 2^{M+1}} \frac{1}{a_1^{M+1} a_2} \sum_{k=0}^M \left( t-a_2 \right)^k a_1^{M-k}  \\
    &= \frac{1}{(M+1) 2^{M+1}} \frac{a_1^{M+1} - (t-a_2)^{M+1}}{a_1^{M+1} a_2}, \\
    \int_{[v_1,v_2]} x_2^M \mathrm{d}x &= 
    % \frac{1}{M! (M+1)!} \frac{a_1+a_2-t}{2 a_1 a_2} \sum_{k=0}^M \left( \frac{t-a_1}{2 a_2} \right)^k \left( \frac{1}{2} \right)^{M-k}  \\
    % &= \frac{a_1+a_2-t}{M! (M+1)! 2^{M+1}} \frac{1}{a_1 a_2^{M+1}} \sum_{k=0}^M \left( t-a_1 \right)^k a_2^{M-k}  \\
    % &=
    \frac{1}{(M+1) 2^{M+1}} \frac{a_2^{M+1} - (t-a_1)^{M+1}}{a_1 a_2^{M+1}}.
\end{align*}
Therefore, the $M$-th moment in this chamber coincides with $f_2^{(M,2)}$ in the statement.
\end{proof}

With this, we can now provide formulas for the critical points of the moments of the slices of the square.
\begin{proof}[Proof of \Cref{thm:2Dslicemoment}]
    In order to find the critical points inside the chambers, we need to compute the spherical gradients of the rational function in \Cref{prop:poly_moment_2D}, as explained in \Cref{sec:algebraic}. For odd $M$, the spherical gradient of $f_1^{(M,2)}$ is a vector with entries
    \begin{align*}
         &\tfrac{a_{2}\,(a_{1}^{2}-(M+1))\, \bigl((t-a_{2})^{M} + (t+a_{2})^{M}\bigr)-t\,(a_{1}^{2}(M+2)-(M+1))\, \bigl((t-a_{2})^{M} - (t+a_{2})^{M}\bigr)}{(M+1) 2^{M+1} \, a_{1}^{M+2} a_{2}}, \\
        &\tfrac{
          a_{2}\,\bigl(M + a_{2}^{2}\bigr)\,\bigl((t-a_{2})^{M} + (t+a_{2})^{M}\bigr) - t\,\bigl((M+2)\,a_{2}^{2} - 1\bigr)\,\bigl((t-a_{2})^{M} - (t+a_{2})^{M}\bigr)}{(M+1)\,2^{M+1} \, a_{1}^{M+1}\,a_{2}^{2}}.
    \end{align*}
    Denote by 
    \[
    A_+(a_2,t) = (t-a_{2})^{M} + (t+a_{2})^{M}, \qquad A_-(a_2,t) = \frac{(t-a_{2})^{M} - (t+a_{2})^{M}}{a_2}, 
    \]
    as in the statement. For $M\geq 0$, these are both polynomials.
    Using $a_1^2+a_2^2=1$, the zeros of the spherical gradient are either $(1,0)$ or they are the points $a \in S^1$ satisfying
    \[
        \bigl(a_{2}^{2}+M\bigr)\,A_+(a_2,t) - t\,\bigl((M+2)\,a_{2}^{2} - 1\bigr)\,A_-(a_2,t) = 0,
    \]
    since both equations reduce to this. This equation is nontrivial and possibly provides additional solutions.
    
    For even $M$, the spherical gradient of $f_1^{(M,2)}$, using the polynomials $A_{\pm}$ defined above, reads
    \begin{appendixformulas}
    \begin{align*}
    &
    \tfrac{
    -\,2 a_1^M a_{2}
    + 2a_{1}^{M+2} a_{2}
    +
    a_{2}(a_{1}^{2} - (M+1))A_+(a_2,t) 
    -t (a_1^2(M+2) - (M+1))a_2 A_-(a_2,t)
    +2 a_{1}^{M} a_{2}^{3} +
     a_2(a_2^2+M)A_+(a_2,t)
      -t (a_2^2(M+2)-1)a_2 A_-(a_2,t)}{(M+1)\,2^{M+1}\,
    a_{1}^{M+2} a_{2}
    }.
    \end{align*}
    \end{appendixformulas}
 Similar to the odd case, $(1,0)$ is always an admissible solution, and we reduce the system to 
    \[
    2 a_1^M a_2^2
    +(a_2^2+M)A_+(a_2,t)
    -t (a_2^2(M+2)-1)A_-(a_2,t) = 0,
    \]
    for $a\in S^1$.
    For $M=0,2$, the only admissible solution is $(1,0)$, but for $M\geq 4$ the equation is nontrivial and possibly provides additional real solutions depending on the value of $t$.

    Moving on to $f_2^{(M,2)}$, we compute its spherical gradient as above.
    Also in this case, both entries give the same condition on the sphere, namely
    \[
    a_1^M (t - a_1)^M p(a_1,t,M) + 
    - a_2^M (t - a_2)^M p(a_2,t,M) + a_1^M a_2^M (a_1^3 - a_2^3) = 0,
    \]
    where $p(\alpha,t,M) = \alpha^3 -t(M+2)\alpha^2+M\alpha+t$. When $M=0$, this reduces to 
    \[
    a_1^3-a_2^3+2 a_2^2 t-t = 0,
    \]
    which has nontrivial solutions in the chamber $C_{1,2}$ when $t\in [1,\frac{3}{2 \sqrt{2}}]$. For $M\geq 1$ this equation provides additional real critical points.
\end{proof}

%---------------------------------------------------------------------------

\subsection{$B_{\infty}^3$}
\label{sec:3d_tables}

In this subsection, we provide two different descriptions of the piecewise rational functions for volumes and moments of slices and slabs of $B_\infty^3$. The first one follows K\"onig and Koldobsky \cite{KK11}, and identifies the largest range of $t$ as a function of $a$ in which a certain rational function is valid (see \Cref{table:3d_slice_formulas}).
The second description is coherent with the output of our algorithm, described in \Cref{sec:methodology}.  Indeed, for each region $R_i$ on the sphere we find the associated intervals $I_{i,j}(a)$ and compute the rational function for each chamber $C_{i,j}$ (see \Cref{table:3d}).
As in the two-dimensional case, by construction, the formulas for higher moments are defined on the same regions as the volumes. Dropping the superscripts and using $f_k,g_k$ to denote  $f_k^{(M,3)},g_k^{(M,3)}$, \Cref{table:3d_slice_formulas} is therefore also a valid table for the moment formulas for $M=1,2,3,4$.
The explicit formulas are given in \Cref{appendix:3dvol,appendix:3Dmoment}.

\begin{table}[h!]
\renewcommand{\arraystretch}{1.08}
\setlength{\tabcolsep}{3pt}
\centering
\begin{tabular}{|c|c|c|}
\hline
$\bar a \ \text{s.t.}\ \frac{\bar a}{\|\bar a\|}\in R_i$ & $I_{i,j}(\bar a)$ & Formulas \\
\hline
\multirow{4}{*}{$(4,2,1)$} & $[0, 1]$ & $f_{1}$ \\
\cline{2-3}
 & $[1, 3]$ & $f_{3}$ \\
\cline{2-3}
 & $[3, 5]$ & $f_{4}$ \\
\cline{2-3}
 & $[5, 7]$ & $f_{5}$ \\
\hline
\multirow{4}{*}{$(4,3,2)$} & $[0, 1]$ & $f_{1}$ \\
\cline{2-3}
 & $[1, 3]$ & $f_{2}$ \\
\cline{2-3}
 & $[3, 5]$ & $f_{4}$ \\
\cline{2-3}
 & $[5, 9]$ & $f_{5}$ \\
\hline
\end{tabular}
\caption{
In the three-dimensional setting, where the polyhedral norm ball is the unit cube
centered at the origin, this table records the piecewise rational formulas of slices ($f_i$) and slabs ($g_i$) obtained from each maximal open chamber $(a,t) \in R_i \times I_{i,j}(a)$.
Note that here we omit the superscripts. Instead of using $f_k^{(M,3)}$ and $g_k^{(M,3)}$, we use $f_k$ and $g_k$ to denote, respectively, the $k$-th rational function of the $M$-th moment of the slice and slab of the 3-cube.
}\label{table:3d}
\end{table}

\begin{table}[ht]
\centering
\renewcommand{\arraystretch}{1.15}
\begin{tabular}{|l|c|}
\hline
\textbf{$t$-range} & \textbf{Formulas} \\
\hline

$0 \le t \le a_1-(a_2+a_3)$
& $f_1, g_1$ \\ \hline

$0 \le t \le (a_2+a_3)-a_1$
& $f_2, g_2$ \\ \hline

$\lvert a_1-(a_2+a_3)\rvert \le t \le a_1-a_2+a_3$
& $f_3, g_3$ \\ \hline

$a_1-a_2+a_3 \le t \le a_1+a_2-a_3$
& $f_4, g_4$ \\ \hline

$a_1+a_2-a_3 \le t \le a_1+a_2+a_3$
& $f_5, g_5$ \\ \hline
\end{tabular}
\caption{In the three-dimensional setting, where the polyhedral norm ball is the unit cube centered at the origin, this table records the piecewise rational formulas for the slice/slab volume and moment as functions of $(a,t)$. For each interval of $t$ shown in the left column, the corresponding formulas in the right column are valid for that interval. Note that we omit the superscripts here and use $f_i$ and $g_i$ to denote, respectively, the $i$-th rational expressions for the $M$-th moment of the slice and slab of the cube.}
\label{table:3d_slice_formulas}
\end{table}

\begin{remark}
    Recall that the formulas $g_k^{(M,d)}$ for slabs are valid for all $a \in \R^d$, while the formulas $f_k^{(M,d)}$ for slices are only valid for $a \in S^{d-1}$.
    For the case of slices, \Cref{table:3d,table:4d_chamber} thus require a more careful interpretation: vectors $\bar{a}$ from these tables are not contained in $S^{d-1}$, but, to be more specific, their normalization $\frac{\bar{a}}{\|\bar{a}\|} \in R_i \subset S^{d-1}$. The interval $I_{i,j}(\bar{a})$ in these tables is given for $\bar{a}$ and not for its normalization. However, we have the equality of hyperplanes $H(\bar{a},t) = H\left(\tfrac{\bar{a}}{\|\bar{a}\|},\tfrac{t}{\|\bar{a}\|}\right)$. Thus, in order to obtain the volume of the slice  $\hypercube^d \cap H(\bar{a},t)$ for $t \in I_{i,j}(\bar{a})$, one needs to evaluate the corresponding rational function $f_k^{(M,d)}$ as
    \[
    \vol_{d-1}(B_\infty^d \cap H(\bar{a},t)) = f_k^{(M,d)}\left(\frac{\bar{a}}{\|\bar{a}\|}, \frac{t}{\|\bar{a}\|}\right).
    \]
\end{remark}

\subsection{$B_{\infty}^4$}
\label{sec:4d_tables}

We present here the piecewise rational functions of volumes and higher moments of slices and slabs of $B_\infty^4$ (\Cref{table:4d_kkchamber,table:4d_chamber}). The explicit rational functions for the volume can be found in \Cref{appendix:4dvol}. Due to their length, the explicit functions for higher moments of order $M=1,2,3,4$ can be found in our repository\footnote{\url{https://github.com/RainCamel/slab_of_the_poly_norms}}.

\begin{proof}[Proof of \Cref{thm:4dvol}]
    \Cref{lemma:summaryofBDLMpaper,lem:slab-chambers} imply that the volumes of slices and slabs are piecewise rational functions, whose pieces are supported on maximal slicing chambers. 
    The vertex-ordering method from \Cref{sec:methodology} identifies \textit{fourteen} regions $C_{i,j}$ for which $a_1\geq\dots\geq a_4\geq0$, up to signed permutation of $a_1,a_2,a_3,a_4$. Because the moment computations use the same underlying polytopes coming from the same maximal chamber, the number of moment formulas should match the number of volume formulas. In fact, the computational results in \Cref{appendix:4dvol} show that there are also \textit{fourteen} distinct rational-function expressions for the volumes of the corresponding slices and slabs. By the argument in \Cref{lem:slab-chambers}, these volume formulas are also valid on the boundary of the closure of the open maximal slicing chambers, and are hence valid on all maximal closed chambers of the associated sweep arrangement.
\end{proof}

From the data obtained, one can observe that $f_1^{(0,4)}$, the simplest volume formula for slices of $B_\infty^4$, is the same as $f_1^{(0,2)}$ and $f_1^{(0,3)}$, the simplest volume formulas of $B_\infty^2$ and $B_\infty^3$, respectively. The same phenomenon can be observed for $g_1^{(0,2)},g_1^{(0,3)}$, and $g_1^{(0,4)}$, the simplest volume formulas for slabs. We show that this holds in all dimensions, as these functions parametrize the volume of parallelepipeds of dimensions $(d-1)$ and $d$, respectively.
\begin{lemma}
    Let $C_{1,1} = \{(a,t) \in S^{d-1}\times\R \mid a_1 \geq \dots \geq a_d\geq 0, \, t \leq a_1 - \sum_{i=2}^d a_i\}$. The volume formulas for $\slicetxt(a,t,\|\cdot\|_\infty)$ and $\slabtxt(a,t,\|\cdot\|_\infty)$ of $\hypercube^d$ for $(a,t)\in C_{1,1}$ satisfy respectively
    \[
    f_1^{(0,d)} = \frac{1}{a_1}, \qquad g_1^{(0,d)} = \frac{t}{a_1}.
    \]
\end{lemma}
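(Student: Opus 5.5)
The plan is to show directly that for $(a,t)\in C_{1,1}$ the hyperplane $H(a,t)$, and hence every hyperplane $H(a,s)$ with $|s|\le t$, meets only the edges of $\hypercube^d$ parallel to $e_1$, and every such edge exactly once. The slice is then the graph of an affine function over the facet $[-\tfrac12,\tfrac12]^{d-1}$ in the coordinates $x'=(x_2,\dots,x_d)$, and the slab is the region between two such graphs.

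\emph{Step 1 (combinatorics).} Write $x=(x_1,x')$. On $H(a,t)$ we have
\[
x_1=\frac{t/2-\langle a',x'\rangle}{a_1},\qquad a'=(a_2,\dots,a_d).
\]
For $x'\in[-\tfrac12,\tfrac12]^{d-1}$ we have $|\langle a',x'\rangle|\le \tfrac12\sum_{i\ge2}a_i$. Combined with $0\le t\le a_1-\sum_{i\ge 2}a_i$, this gives $|t/2-\langle a',x'\rangle|\le a_1/2$, that is, $x_1\in[-\tfrac12,\tfrac12]$. Hence the projection $\pi:x\mapsto x'$ maps $\slicetxt(a,t,\|\cdot\|_\infty)$ bijectively onto the full $(d-1)$-cube. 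This is the same kind of reasoning as \Cref{lemma:summaryofBDLMpaper}(i): the intersected edges are exactly the $2^{d-1}$ edges parallel to $e_1$, with vertices parametrized by \eqref{formula:symbolic_vertex}. The slice is thus a parallelepiped, the affine image of the unit $(d-1)$-cube. I will assume $a_1>0$, which follows from $a\in S^{d-1}$ with $a_1\ge a_i\ge0$.

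\emph{Step 2 (slice volume).} The graph map $x'\mapsto(x_1(x'),x')$ has area element $\sqrt{1+|\nabla x_1|^2}=\sqrt{1+|a'|^2/a_1^2}=\|a\|/a_1=1/a_1$, using $\|a\|=1$. Since the base has volume $1$, the slice volume is $1/a_1$. Alternatively, one can check this with \eqref{eqn:determinant_slice} on a decomposition of the parallelepiped: its volume equals the volume of the projected cube divided by $|\langle a,e_1\rangle|$.

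\emph{Step 3 (slab volume).} By Step 1 applied to both $t$ and $-t$ (central symmetry, as in \Cref{lem:slab-chambers}), for each $x'$ the fiber of the slab in the $x_1$-direction is the interval
\[
\frac{-t/2-\langle a',x'\rangle}{a_1}\le x_1\le\frac{t/2-\langle a',x'\rangle}{a_1},
\]
which lies inside $[-\tfrac12,\tfrac12]$ and has length $t/a_1$. Fubini over the unit cube in $x'$ then gives $t/a_1$. As a consistency check, this equals $\int_{-t/2}^{t/2}$ of the slice volume in the normal direction, $\int_{-t/2}^{t/2}\frac{1}{a_1}\,ds=t/a_1$.

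The only real obstacle is Step 1, namely verifying that the fiber stays inside the cube for all $x'$ and with both signs of $t$. It reduces to the triangle inequality above. The boundary case $t=a_1-\sum_{i\ge2}a_i$ is covered by the closed inequalities, or alternatively by continuity as in \Cref{lem:slab-chambers}.
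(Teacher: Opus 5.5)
Your proof is correct and follows essentially the same route as the paper. Both arguments view the slab as the shear of the unit cube along $e_1$ over the $(x_2,\dots,x_d)$-coordinates, and your Fubini computation with fibers of length $t/a_1$ is the same calculation as the paper's Jacobian determinant $t/a_1$. The one difference is in the slice: the paper divides the slab volume by the height $t$, whereas you compute the slice directly from the graph area element $\|a\|/a_1$, which also covers $t=0$ without a continuity step. Your triangle-inequality check in Step 1 proves the combinatorial claim that the paper only asserts.
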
 
\begin{proof}
Let $(a,t) \in C_{1,1}$. Each of the hyperplanes $H(a,t)$ and $H(a,-t)$ intersects all and only the edges of the hypercube parallel to the $a_1$-axis, and there is no vertex of the cube in the slab. Hence, $\sliceinf$ is a $(d-1)$-dimensional parallelepiped and $\slabinf$ is a $d$-dimensional parallelepiped. Each vertex $v$ of the slab has coordinates 
\[
v_1 = \frac{ \pm \frac{t}{2}-\sum_{i=2}^d a_i v_i}{a_1}, \qquad v_2,\ldots,v_d\in\{-\frac{1}{2},\frac{1}{2}\}.
\]
In other words, the slab is the affine image of $[-\frac{1}{2},\frac{1}{2}]^d$ via the map
\[
u \mapsto \left( \frac{ t u_1-\sum_{i=2}^d a_i u_i}{a_1}, u_2, \ldots, u_d \right),
\]
and the slice is one of its facets, i.e., the image of $\{\frac{1}{2}\}\times [-\frac{1}{2},\frac{1}{2}]^{d-1}$ via the same map. The Jacobian of this map reads
\[
J
=
\begin{pmatrix}
\tfrac{t}{a_1} & -\tfrac{a_2}{a_1} &  \cdots & -\tfrac{a_d}{a_1} \\[2pt]
0 & 1 & \cdots & 0 \\
\vdots &  \vdots & \ddots & \vdots \\
0 &  0 & \cdots & 1
\end{pmatrix},
\]
hence the volume of the $d$-dimensional slab is the determinant of $J$, namely $g_1^{(0,d)} = \frac{t}{a_1}$. Since the height of the parallelepiped with respect to its facet given by the slice obtained from $H(a,t)$ is $t$, we get that the volume of the slice is $f_1^{(0,d)} = \frac{1}{a_1}$.
\end{proof}

\begin{table}[H]
\footnotesize
\centering
\renewcommand{\arraystretch}{1.08}
\setlength{\tabcolsep}{3pt}
\begin{minipage}{0.49\textwidth}
\centering
\begin{tabular}{|c|c|c|}
\hline
$\bar a \ \text{s.t.}\ \frac{\bar a}{\|\bar a\|}\in R_i$ & $I_{i,j}(\bar a)$ & Formulas \\
\hline
\multirow{8}{*}{$(7, 6, 5, 3)$} & $[0, 1]$ & $f_{4}$ \\
\cline{2-3}
 & $[1, 3]$ & $f_{13}$ \\
\cline{2-3}
 & $[3, 5]$ & $f_{8}$ \\
\cline{2-3}
 & $[5, 7]$ & $f_{14}$ \\
\cline{2-3}
 & $[7, 9]$ & $f_{7}$ \\
\cline{2-3}
 & $[9, 11]$ & $f_{12}$ \\
\cline{2-3}
 & $[11, 15]$ & $f_{5}$ \\
\cline{2-3}
 & $[15, 21]$ & $f_{3}$ \\
\hline
\multirow{8}{*}{$(8, 4, 2, 1)$} & $[0, 1]$ & $f_{1}$ \\
\cline{2-3}
 & $[1, 3]$ & $f_{10}$ \\
\cline{2-3}
 & $[3, 5]$ & $f_{6}$ \\
\cline{2-3}
 & $[5, 7]$ & $f_{11}$ \\
\cline{2-3}
 & $[7, 9]$ & $f_{2}$ \\
\cline{2-3}
 & $[9, 11]$ & $f_{12}$ \\
\cline{2-3}
 & $[11, 13]$ & $f_{5}$ \\
\cline{2-3}
 & $[13, 15]$ & $f_{3}$ \\
\hline
\multirow{8}{*}{$(8, 6, 4, 1)$} & $[0, 1]$ & $f_{4}$ \\
\cline{2-3}
 & $[1, 3]$ & $f_{13}$ \\
\cline{2-3}
 & $[3, 5]$ & $f_{6}$ \\
\cline{2-3}
 & $[5, 7]$ & $f_{11}$ \\
\cline{2-3}
 & $[7, 9]$ & $f_{2}$ \\
\cline{2-3}
 & $[9, 11]$ & $f_{12}$ \\
\cline{2-3}
 & $[11, 17]$ & $f_{5}$ \\
\cline{2-3}
 & $[17, 19]$ & $f_{3}$ \\
\hline
\multirow{8}{*}{$(8, 6, 4, 3)$} & $[0, 1]$ & $f_{9}$ \\
\cline{2-3}
 & $[1, 3]$ & $f_{13}$ \\
\cline{2-3}
 & $[3, 5]$ & $f_{8}$ \\
\cline{2-3}
 & $[5, 7]$ & $f_{11}$ \\
\cline{2-3}
 & $[7, 9]$ & $f_{7}$ \\
\cline{2-3}
 & $[9, 13]$ & $f_{12}$ \\
\cline{2-3}
 & $[13, 15]$ & $f_{5}$ \\
\cline{2-3}
 & $[15, 21]$ & $f_{3}$ \\
\hline
\multirow{8}{*}{$(10, 7, 4, 2)$} & $[0, 1]$ & $f_{9}$ \\
\cline{2-3}
 & $[1, 3]$ & $f_{13}$ \\
\cline{2-3}
 & $[3, 5]$ & $f_{6}$ \\
\cline{2-3}
 & $[5, 9]$ & $f_{11}$ \\
\cline{2-3}
 & $[9, 11]$ & $f_{2}$ \\
\cline{2-3}
 & $[11, 15]$ & $f_{12}$ \\
\cline{2-3}
 & $[15, 19]$ & $f_{5}$ \\
\cline{2-3}
 & $[19, 23]$ & $f_{3}$ \\
\hline
\multirow{8}{*}{$(10, 4, 3, 2)$} & $[0, 1]$ & $f_{1}$ \\
\cline{2-3}
 & $[1, 5]$ & $f_{10}$ \\
\cline{2-3}
 & $[5, 7]$ & $f_{6}$ \\
\cline{2-3}
 & $[7, 9]$ & $f_{11}$ \\
\cline{2-3}
 & $[9, 11]$ & $f_{7}$ \\
\cline{2-3}
 & $[11, 13]$ & $f_{12}$ \\
\cline{2-3}
 & $[13, 15]$ & $f_{5}$ \\
\cline{2-3}
 & $[15, 19]$ & $f_{3}$ \\
\hline
\multirow{8}{*}{$(10, 6, 3, 2)$} & $[0, 1]$ & $f_{9}$ \\
\cline{2-3}
 & $[1, 3]$ & $f_{10}$ \\
\cline{2-3}
 & $[3, 5]$ & $f_{6}$ \\
\cline{2-3}
 & $[5, 9]$ & $f_{11}$ \\
\cline{2-3}
 & $[9, 11]$ & $f_{2}$ \\
\cline{2-3}
 & $[11, 15]$ & $f_{12}$ \\
\cline{2-3}
 & $[15, 17]$ & $f_{5}$ \\
\cline{2-3}
 & $[17, 21]$ & $f_{3}$ \\
\hline
\end{tabular}
\end{minipage}
\hfill
\begin{minipage}{0.49\textwidth}
\centering
\begin{tabular}{|c|c|c|}
\hline
$\bar a \ \text{s.t.}\ \frac{\bar a}{\|\bar a\|}\in R_i$ & $I_{i,j}(\bar a)$ & Formulas \\
\hline
\multirow{8}{*}{$(8, 4, 3, 2)$} & $[0, 1]$ & $f_{9}$ \\
\cline{2-3}
 & $[1, 3]$ & $f_{10}$ \\
\cline{2-3}
 & $[3, 5]$ & $f_{6}$ \\
\cline{2-3}
 & $[5, 7]$ & $f_{11}$ \\
\cline{2-3}
 & $[7, 9]$ & $f_{7}$ \\
\cline{2-3}
 & $[9, 11]$ & $f_{12}$ \\
\cline{2-3}
 & $[11, 13]$ & $f_{5}$ \\
\cline{2-3}
 & $[13, 17]$ & $f_{3}$ \\
\hline
\multirow{8}{*}{$(8, 5, 4, 2)$} & $[0, 1]$ & $f_{9}$ \\
\cline{2-3}
 & $[1, 3]$ & $f_{13}$ \\
\cline{2-3}
 & $[3, 5]$ & $f_{6}$ \\
\cline{2-3}
 & $[5, 7]$ & $f_{11}$ \\
\cline{2-3}
 & $[7, 9]$ & $f_{7}$ \\
\cline{2-3}
 & $[9, 11]$ & $f_{12}$ \\
\cline{2-3}
 & $[11, 15]$ & $f_{5}$ \\
\cline{2-3}
 & $[15, 19]$ & $f_{3}$ \\
\hline
\multirow{8}{*}{$(8, 6, 5, 4)$} & $[0, 1]$ & $f_{9}$ \\
\cline{2-3}
 & $[1, 3]$ & $f_{13}$ \\
\cline{2-3}
 & $[3, 5]$ & $f_{8}$ \\
\cline{2-3}
 & $[5, 7]$ & $f_{14}$ \\
\cline{2-3}
 & $[7, 11]$ & $f_{7}$ \\
\cline{2-3}
 & $[11, 13]$ & $f_{12}$ \\
\cline{2-3}
 & $[13, 15]$ & $f_{5}$ \\
\cline{2-3}
 & $[15, 23]$ & $f_{3}$ \\
\hline
\multirow{8}{*}{$(8, 7, 4, 2)$} & $[0, 1]$ & $f_{4}$ \\
\cline{2-3}
 & $[1, 3]$ & $f_{13}$ \\
\cline{2-3}
 & $[3, 5]$ & $f_{8}$ \\
\cline{2-3}
 & $[5, 7]$ & $f_{11}$ \\
\cline{2-3}
 & $[7, 9]$ & $f_{2}$ \\
\cline{2-3}
 & $[9, 13]$ & $f_{12}$ \\
\cline{2-3}
 & $[13, 17]$ & $f_{5}$ \\
\cline{2-3}
 & $[17, 21]$ & $f_{3}$ \\
\hline
\multirow{8}{*}{$(10, 7, 6, 2)$} & $[0, 1]$ & $f_{4}$ \\
\cline{2-3}
 & $[1, 5]$ & $f_{13}$ \\
\cline{2-3}
 & $[5, 7]$ & $f_{6}$ \\
\cline{2-3}
 & $[7, 9]$ & $f_{11}$ \\
\cline{2-3}
 & $[9, 11]$ & $f_{7}$ \\
\cline{2-3}
 & $[11, 13]$ & $f_{12}$ \\
\cline{2-3}
 & $[13, 21]$ & $f_{5}$ \\
\cline{2-3}
 & $[21, 25]$ & $f_{3}$ \\
\hline
\multirow{8}{*}{$(10, 8, 4, 3)$} & $[0, 1]$ & $f_{9}$ \\
\cline{2-3}
 & $[1, 3]$ & $f_{13}$ \\
\cline{2-3}
 & $[3, 5]$ & $f_{8}$ \\
\cline{2-3}
 & $[5, 9]$ & $f_{11}$ \\
\cline{2-3}
 & $[9, 11]$ & $f_{2}$ \\
\cline{2-3}
 & $[11, 17]$ & $f_{12}$ \\
\cline{2-3}
 & $[17, 19]$ & $f_{5}$ \\
\cline{2-3}
 & $[19, 25]$ & $f_{3}$ \\
\hline
\multirow{8}{*}{$(10, 8, 6, 3)$} & $[0, 1]$ & $f_{4}$ \\
\cline{2-3}
 & $[1, 5]$ & $f_{13}$ \\
\cline{2-3}
 & $[5, 7]$ & $f_{8}$ \\
\cline{2-3}
 & $[7, 9]$ & $f_{11}$ \\
\cline{2-3}
 & $[9, 11]$ & $f_{7}$ \\
\cline{2-3}
 & $[11, 15]$ & $f_{12}$ \\
\cline{2-3}
 & $[15, 21]$ & $f_{5}$ \\
\cline{2-3}
 & $[21, 27]$ & $f_{3}$ \\
\hline
\end{tabular}
\end{minipage}
\caption{
In the four-dimensional setting, where the polyhedral norm ball is the unit 4-cube
centered at the origin, this table records the piecewise rational formulas of slices ($f_i$) and slabs ($g_i$) obtained from each maximal open chamber $(a,t) \in R_i \times I_{i,j}(a)$.
Note that here we omit the superscripts. Instead of using $f_k^{(M,4)}$ and $g_k^{(M,4)}$, we use $f_k$ and $g_k$ to denote, respectively, the $k$-th rational function of the $M$-th moment of the slice and slab of the 4-cube.}
\label{table:4d_chamber}
\end{table}

\begin{table}[h!]
\centering
\begin{tabular}{|l|c|}
\hline
\textbf{$t$-range} & \textbf{Formulas} \\ \hline
$0 \le t \le a_1-a_2-a_3-a_4$ 
& $f_1, g_1$ \\ \hline
$|a_1-(a_2+a_3+a_4)| \le t \le a_1-a_2-a_3+a_4$ 
& $f_{10}, g_{10}$ \\ \hline
$|a_1-a_2-a_3+a_4| \le t \le a_3-|a_1-a_2-a_4|$ 
& $f_{13}, g_{13}$ \\ \hline
$a_1+a_2-a_3-a_4 \le t \le -a_1+a_2+a_3+a_4$ 
& $f_{14}, g_{14}$ \\ \hline
$0 \le t \le -a_1+a_2+a_3-a_4$ 
& $f_4, g_4$ \\ \hline
$0 \le t \le a_4 - |a_1-a_2-a_3|$ 
& $f_9, g_9$ \\ \hline
$|a_1-a_2-a_3| + a_4 \le t \le a_1-a_2+a_3-a_4$ 
& $f_6, g_6$ \\ \hline
$a_1-a_2+a_3-a_4 \le t \le a_2 - |a_1-a_3-a_4|$ 
& $f_8, g_8$ \\ \hline
$a_3+|a_1-a_2-a_4| \le t \le a_1-|a_2-a_3-a_4|$ 
& $f_{11}, g_{11}$ \\ \hline
$a_2+|a_1-a_3-a_4| \le t \le a_1-a_2+a_3+a_4$ 
& $f_7, g_7$ \\ \hline
$a_1-a_2+a_3+a_4 \le t \le a_1+a_2-a_3-a_4$ 
& $f_2, g_2$ \\ \hline
$a_1 + |a_2-a_3-a_4| \le t \le a_1+a_2-a_3+a_4$ 
& $f_{12}, g_{12}$ \\ \hline
$a_1+a_2-a_3+a_4 \le t \le a_1+a_2+a_3-a_4$ 
& $f_5, g_5$ \\ \hline
$a_1+a_2+a_3-a_4 \le t \le a_1+a_2+a_3+a_4$ 
& $f_3, g_3$ \\ \hline
\end{tabular}
\caption{
In the four-dimensional setting, where the polyhedral norm ball is the unit 4-cube centered at the origin, this table records the piecewise formulas for the slice/slab volume and moment as functions of $(a,t)$. For each interval of $t$ shown in the left column, the corresponding formulas in the right column are valid on that interval. Note that we omit the superscripts here and use $f_i$ and $g_i$ to denote, respectively, the $i$-th rational expressions for the $M$-th moment of the slice and slab of the 4-cube.}
\label{table:4d_kkchamber}
\end{table}

\subsection{Conjectured extremal values}\label{subsec:extreme4cube}
After recovering the explicit piecewise rational functions giving the volume of slices and slabs of the cube, we used \texttt{Mathematica} to find the maximum and minimum numerically at any given height $t$. With this, we confirm the previously known formulas for the $2$- and $3$-dimensional cubes. 
To formulate \Cref{conj:extremal_slices,conj:extremal_slabs} we employ the algebraic method from \Cref{sec:algebraic}.
This gives an implicit description of all the critical points of the volume.
Unfortunately, we cannot turn this implicit description into an explicit one for all the chambers. This is due to the algebraic complexity of the set of critical points.
However, the contributions for the maxima and minima seem to be given by critical points for which we have an explicit description. This has been verified numerically and it is the origin of the conjectures. 
The colors used in \Cref{fig:4cube_maxmin} refer to the chambers the extremizer lies in, according to the numbering in \Cref{table:4d_kkchamber}. More specifically, red corresponds to the first chamber, green to the second, cyan to the third, purple to the fifth, yellow to the fourteenth. Moreover, we use blue for the common boundary between the first and the second chamber, and black when the extremal hyperplane does not intersect the cube at all.

\subsection{Proof of \Cref{thm:moment_formula}}
\label{sec:pf_of_thm4}

With the theory of \Cref{sec:methodology} and all the computational results of \Cref{sec:volumes} at hand, we are fully equipped to prove \Cref{thm:moment_formula}, namely, that the higher moments of slices and slabs of the cube are piecewise rational functions, whose pieces are supported on the same domain as the respective volume functions. Notably, for $d\leq 4$ and $M=1,2,3,4$, we observe that also the degrees of these rational functions on each maximal slicing chamber agree.

\begin{definition}
\label{def:rational_fn_degree}
Let \(f=p/q\) be a rational function in the variables \((a_1,\dots,a_d,t)\), written in reduced form with \(p\) and \(q\) polynomials having no common factors, and having respective degrees $\deg(p)$ and $\deg(q)$. We define the \emph{degree} of $f$ as \(\deg(f):=\deg(p)-\deg(q)\). 
\end{definition}

\begin{proof}[Proof of \Cref{thm:moment_formula}]
\Cref{lemma:summaryofBDLMpaper,lem:slab-chambers} combined with \Cref{eqn:moment} imply that the higher moments are piecewise rational functions and that the domains of their pieces are supported on the same domains as the pieces of the rational function expressions of the volumes. For the degrees, we observe that for any fixed $d \in \{2,3,4\}$ and any fixed $k$ we have
\[
    \deg(f_k^{(M,d)} )= \deg(f_k^{(0,d)}) \qquad \text{for } M=1,2,3,4
\]
for the moments of slices of the $d$-dimensional cube $\hypercube^d$, and 
\[
    \deg(g_k^{(M,d)} )= \deg(g_k^{(0,d)}) \qquad \text{for } M=1,2,3,4
\]
for the moments of slabs of the $d$-dimensional cube $\hypercube^d$.
\end{proof}

\section*{Conclusion}

In this paper, we developed a unified algebraic and combinatorial framework for computing volumes and higher-order moments of slices and slabs of arbitrary polyhedral norm balls. Our approach combines explicit barycentric triangulations, chamber decompositions determined by linear orderings of vertices of the polyhedral norm ball, and structural formulas showing that every moment expression contains the volume as a multiplicative factor. These methods also imply that the volume and higher moments of slices and slabs are piecewise rational functions in the same parameters that define the slicing hyperplanes.
We applied our methods computationally to obtain all piecewise rational functions for the volumes and first four moments of slabs and slices of the hypercubes $\hypercube^2, \hypercube^3$ and $\hypercube^4$, and closed form formulas for moments of arbitrary degree and their critical points in dimension $2$.

These results demonstrate that, despite the apparent analytic complexity of high-dimensional slices and slabs, the underlying algebraic structure exhibits strong regularity: volume and moment formulas are piecewise rational, and are determined entirely by finitely many maximal chambers in the parameter space. Our methods also provide a computational pipeline for symbolic integration of slabs of polytopes with parametric vertices and show that, for fixed dimension $d$, the computational complexity is polynomial in the input size.

\subsection*{Acknowledgments}
We are grateful to Alexander Koldobsky for bringing this research direction to our attention, and to him and Hermann König for stimulating email exchanges. We also thank Martin Henk for his useful comments.
JDL and YL were partially supported by NSF grants 2348578 and 2434665.
MB was supported by the SPP 2458 "Combinatorial Synergies", funded by the Deutsche Forschungsgemeinschaft (DFG, German Research Foundation).
CM is supported by Dr. Max R\"ossler, the Walter Haefner Foundation, and the ETH Z\"urich Foundation. 

\vspace{1cm}

\printbibliography

% $ $
\vskip 1in
\noindent
\begin{minipage}[t]{0.48\textwidth}
\centering
\textsc{Marie-Charlotte Brandenburg}\\
\textsc{Ruhr-Universität Bochum}\\
\url{marie-charlotte.brandenburg@rub.de}
\end{minipage}
\hfill
\begin{minipage}[t]{0.48\textwidth}
\centering
\textsc{Jesús A. De Loera}\\
\textsc{University of California, Davis}\\
\url{jadeloera@ucdavis.edu}
\end{minipage}

\vspace{1em}

\noindent
\begin{minipage}[t]{0.48\textwidth}
\centering
\textsc{Yu Luo}\\
\textsc{University of California, Davis}\\
\url{ayuluo@ucdavis.edu}
\end{minipage}
\hfill
\begin{minipage}[t]{0.48\textwidth}
\centering
\textsc{Chiara Meroni}\\
\textsc{ETH Institute for Theoretical Studies}\\
\url{chiara.meroni@eth-its.ethz.ch}
\end{minipage}
 
\newpage
\appendix

\section{Appendix}\label{appendix}

We present in the appendix all volume (for slices and slabs, in dimensions two, three, and four) and moment (for slices and slabs, in dimensions two and three, of order at most four) formulas appearing in the paper. All computations were carried out symbolically in \texttt{SageMath} \cite{sagemath}. 
The code and list of all computed volume and moment functions are publicly available at \\
{
\centering \url{https://github.com/RainCamel/slab_of_the_poly_norms}
}

 \etocsettocstyle{\subsection*{Appendix contents}}{}
 \setcounter{tocdepth}{2}
 \localtableofcontents

%===================================================================

%-----------------------------------------------------------------
\subsection{Volume formulas: dimension two}
\label{appendix:2dvol}

\subsubsection{Slices}
\label{appendix:2dslice}

\begin{appendixformulas}
\[
\sliceinf =
\begin{cases}
f_1^{(0,2)} = \dfrac{1}{a_1}, & 0 \le t \le a_1 - a_2, \\[8pt]
f_2^{(0,2)} = \dfrac{a_1 + a_2 - t}{2 a_1 a_2}, & a_1 - a_2 \le t \le a_1 + a_2.
\end{cases}
\]
\end{appendixformulas}

\subsubsection{Slabs}
\label{appendix:2dslab}

\begin{appendixformulas}
\[
\slabinf =
\begin{cases}
g_1^{(0,2)} = \dfrac{t}{a_1}, & 0 \le t \le a_1 - a_2, \\[8pt]
g_2^{(0,2)} = 1 - \dfrac{(a_1 + a_2 - t)^2}{4 a_1 a_2}, & a_1 - a_2 \le t \le a_1 + a_2.
\end{cases}
\]
\end{appendixformulas}

%-----------------------------------------------------------------
\subsection{Volume formulas: dimension three}
\label{appendix:3dvol}

\subsubsection{Slices}
\label{appendix:3dslice}

\begin{appendixformulas}
\[
\sliceinf =
\begin{cases}
f_1^{(0,3)} = \dfrac{1}{a_1}, & 0 \le t \le a_1 - (a_2 + a_3), \\[8pt]
f_2^{(0,3)} = \dfrac{1}{a_1} - \dfrac{t^2 + (a_2 + a_3 - a_1)^2}{4 a_1 a_2 a_3}, & 0 \le t \le (a_2 + a_3) - a_1, \\[8pt]
f_3^{(0,3)} = \dfrac{1}{a_1} - \dfrac{(t + a_2 + a_3 - a_1)^2}{8 a_1 a_2 a_3}, & |a_1 - (a_2 + a_3)| \le t \le a_1 - a_2 + a_3, \\[8pt]
f_4^{(0,3)} = \dfrac{a_1 + a_2 - t}{2 a_1 a_2}, & a_1 - a_2 + a_3 \le t \le a_1 + a_2 - a_3, \\[8pt]
f_5^{(0,3)} = \dfrac{(a_1 + a_2 + a_3 - t)^2}{8 a_1 a_2 a_3}, & a_1 + a_2 - a_3 \le t \le a_1 + a_2 + a_3.
\end{cases}
\]
\end{appendixformulas}

\subsubsection{Slabs}
\label{appendix:3dslab}

\begin{appendixformulas}
\[
\slabinf =
\begin{cases}
g_1^{(0,3)} = \dfrac{t}{a_1}, & 0 \le t \le a_1 - (a_2 + a_3), \\[8pt]
g_2^{(0,3)} = \dfrac{t}{a_1} - \dfrac{t^3 + 3(a_2 + a_3 - a_1)^2 t}{12 a_1 a_2 a_3}, & 0 \le t \le (a_2 + a_3) - a_1, \\[8pt]
g_3^{(0,3)} = \dfrac{t}{a_1} - \dfrac{(t + a_2 + a_3 - a_1)^3}{24 a_1 a_2 a_3}, & |a_1 - (a_2 + a_3)| \le t \le a_1 - a_2 + a_3, \\[8pt]
g_4^{(0,3)} = 1 - \dfrac{a_3^2}{12 a_1 a_2} - \dfrac{((a_1 + a_2) - t)^2}{4 a_1 a_2}, & a_1 - a_2 + a_3 \le t \le a_1 + a_2 - a_3, \\[8pt]
g_5^{(0,3)} = 1 - \dfrac{(a_1 + a_2 + a_3 - t)^3}{24 a_1 a_2 a_3}, & a_1 + a_2 - a_3 \le t \le a_1 + a_2 + a_3.
\end{cases}
\]
\end{appendixformulas}

%-------------------------------------------------------------

\subsection{Volume formulas: dimension four}
\label{appendix:4dvol}

%-------------------------------------------------------------

The ranges in which these rational functions are valid are indicated in \Cref{table:4d_kkchamber}.

\subsubsection{Slices}
\label{appendix:4dslice}

\begin{appendixformulas}
\begin{align*}
f_1^{(0,4)} &= \frac{1}{a_1},\\[6pt]
f_2^{(0,4)} &= \frac{1}{2}\,\frac{a_1 + a_2 - t}{a_1 a_2},\\[6pt]
f_3^{(0,4)} &= \frac{(a_1 + a_2 + a_3 + a_4 - t)^3}{48\,a_1 a_2 a_3 a_4}\\
f_4^{(0,4)} &= \frac{1}{a_3}
      - \frac{t^2 + (a_3 - a_1 - a_2)^2 + \tfrac{1}{3}a_4^2}{4\,a_1 a_2 a_3},\\[6pt]
f_5^{(0,4)} &= \frac{(t - (a_1 + a_2 + a_3))^2}{8\,a_1 a_2 a_3}
      + \frac{a_4^2}{24\,a_1 a_2 a_3},\\[6pt]
f_6^{(0,4)} &= \frac{1}{a_1}
      - \frac{(t - (a_1 - a_2 - a_3))^2}{8\,a_1 a_2 a_3}
      - \frac{a_4^2}{24\,a_1 a_2 a_3},\\[6pt]
f_7^{(0,4)} &= \frac{(t - a_1)^3}{24\,a_1 a_2 a_3 a_4}
      + \frac{\bigl((a_2 + a_3 - a_4)^2 - 4a_2 a_3\bigr)(t - a_1)}{8\,a_1 a_2 a_3 a_4}
      + \frac{1}{2a_1},\\[6pt]
f_8^{(0,4)} &= \frac{(t - (a_3 + a_4))^3}{24\,a_1 a_2 a_3 a_4}
      + \frac{\bigl((a_1 - a_2)^2 - 4a_3 a_4\bigr)(t - (a_3 + a_4))}{8\,a_1 a_2 a_3 a_4}
      + \frac{a_1 + a_2 - a_3 - a_4}{2\,a_1 a_2},\\[6pt]
f_9^{(0,4)} &= \frac{(a_1 - a_2 - a_3 - a_4)t^2}{8\,a_1 a_2 a_3 a_4}
      + \frac{(a_1 - a_2 - a_3 - a_4)^3}{24\,a_1 a_2 a_3 a_4}
      + \frac{1}{a_1},\\[6pt]
f_{10}^{(0,4)} &= -\frac{(t - (a_1 - a_2 - a_3 - a_4))^3}{48\,a_1 a_2 a_3 a_4}
      + \frac{1}{a_1},\\[6pt]
f_{11}^{(0,4)} &= \frac{(t - (a_1 - a_2 + a_3 + a_4))^3}{48\,a_1 a_2 a_3 a_4}
      - \frac{t - (a_1 - a_2 + a_3 + a_4)}{2\,a_1 a_2}
      + \frac{2a_2 - a_3 - a_4}{2\,a_1 a_2},\\[6pt]
f_{12}^{(0,4)} &= \frac{(t - (a_1 + a_2 - a_3 - a_4))^3}{48\,a_1 a_2 a_3 a_4}
      + \frac{a_1 + a_2 - t}{2\,a_1 a_2},\\[6pt]
f_{13}^{(0,4)} &= \frac{(t - (-a_1 + a_2 + a_3 + 3a_4))^3}{48\,a_1 a_2 a_3 a_4}
      + \frac{(a_1 - a_2 - a_3 - a_4)(t - (-a_1 + a_2 + a_3 + 3a_4))}{2\,a_1 a_2 a_3}\\
      &\quad + \frac{1}{a_2} + \frac{1}{a_3}
      + \frac{-a_1^2 + 3a_1 a_4 - a_2^2 - 3a_2 a_4 - a_3^2 - 3a_3 a_4 - 2a_4^2}{2\,a_1 a_2 a_3},\\[6pt]
f_{14}^{(0,4)} &= \frac{(t - \tfrac{a_1 + a_2 + a_3 + a_4}{3})^3}{16\,a_1 a_2 a_3 a_4}
      + \frac{(a_1^2 - a_1 a_2 - a_1 a_3 - a_1 a_4 + a_2^2 - a_2 a_3 - a_2 a_4 + a_3^2 - a_3 a_4 + a_4^2)
              (t - \tfrac{a_1 + a_2 + a_3 + a_4}{3})}{6\,a_1 a_2 a_3 a_4}\\
      &\quad + \frac{2}{9}\!\left(\frac{1}{a_1} + \frac{1}{a_2} + \frac{1}{a_3} + \frac{1}{a_4}\right)
      + \frac{\frac{a_1^3 + a_2^3 + a_3^3 + a_4^3}{27}
             - \frac{a_1^2(a_2 + a_3 + a_4) + a_2^2(a_1 + a_3 + a_4)
             + a_3^2(a_1 + a_2 + a_4) + a_4^2(a_1 + a_2 + a_3)}{18}}{a_1 a_2 a_3 a_4}.
\end{align*}
\end{appendixformulas}

\subsubsection{Slabs}
\label{appendix:4dslab}

\begin{appendixformulas}
\begin{align*}
g_1^{(0,4)} &= \frac{t}{a_1},\\
g_2^{(0,4)} &= 1 - \frac{a_3^{2} + a_4^{2}}{12\,a_1 a_2}
  - \frac{\bigl(t - a_1 - a_2\bigr)^{2}}{4\,a_1 a_2},\\
g_3^{(0,4)} &= \frac{t}{a_1} - \frac{(t^2 + a_4^{2})t}{12\,a_1 a_2 a_3}
  - \frac{\bigl(a_1 - a_2 - a_3\bigr)^{2}t}{4\,a_1 a_2 a_3},\\
g_4^{(0,4)} &= \frac{1}{24 a_1 a_2 a_3}\Bigl(
    a_1^3 - 3a_1^2 a_2 + 3a_1 a_2^2 - a_2^3 - 3a_1^2 a_3 + 6a_1 a_2 a_3 - 3a_2^2 a_3 + 3a_1 a_3^2 - 3a_2 a_3^2 - a_3^3 + a_1 a_4^2 - a_2 a_4^2 - a_3 a_4^2 \\
    &\qquad
    - 3a_1^2 t + 6a_1 a_2 t - 3a_2^2 t + 6a_1 a_3 t 
    + 18a_2 a_3 t - 3a_3^2 t - a_4^2 t + 3a_1 t^2 - 3a_2 t^2 - 3a_3 t^2 - t^3
  \Bigr),\\
g_5^{(0,4)} &= -\frac{1}{24 a_1 a_2 a_3}\Bigl(
    a_1^3 + 3a_1^2 a_2 + 3a_1 a_2^2 + a_2^3 + 3a_1^2 a_3 - 18a_1 a_2 a_3 
    + 3a_2^2 a_3 + 3a_1 a_3^2 + 3a_2 a_3^2 + a_3^3 + a_1 a_4^2 + a_2 a_4^2\\
&\qquad
     + a_3 a_4^2 - 3a_1^2 t - 6a_1 a_2 t - 3a_2^2 t - 6a_1 a_3 t - 6a_2 a_3 t - 3a_3^2 t - a_4^2 t + 3a_1 t^2 + 3a_2 t^2 + 3a_3 t^2 - t^3
  \Bigr),\\   
g_6^{(0,4)} &= \frac{1}{24 a_1 a_2 a_3 a_4}\Bigl(
    a_1^3 - 3a_1^2 a_2 + 3a_1 a_2^2 - a_2^3 - 3a_1^2 a_3 + 6a_1 a_2 a_3 - 3a_2^2 a_3 + 3a_1 a_3^2 - 3a_2 a_3^2 - a_3^3 - 3a_1^2 a_4 + 6a_1 a_2 a_4\\
&\qquad
     - 3a_2^2 a_4 + 6a_1 a_3 a_4 + 18a_2 a_3 a_4 - 3a_3^2 a_4 + 3a_1 a_4^2 - 3a_2 a_4^2 - 3a_3 a_4^2 - a_4^3 + a_1 t^2 - a_2 t^2 - a_3 t^2 - a_4 t^2
  \Bigr) t,\\
g_7^{(0,4)} &= \frac{1}{96 a_1 a_2 a_3 a_4}\Bigl(
        a_1^4 - 4 a_1^3 a_2 + 6 a_1^2 a_2^2 - 4 a_1 a_2^3 + a_2^4 + 6 a_1^2 a_3^2 - 12 a_1 a_2 a_3^2 + 6 a_2^2 a_3^2 + a_3^4 - 12 a_1^2 a_3 a_4 + 24 a_1 a_2 a_3 a_4  \\
    &\qquad
        - 12 a_2^2 a_3 a_4 - 4 a_3^3 a_4
        + 6 a_1^2 a_4^2 - 12 a_1 a_2 a_4^2 + 6 a_2^2 a_4^2 
        + 6 a_3^2 a_4^2 - 4 a_3 a_4^3 + a_4^4
        - 12 a_1^2 a_3 t + 24 a_1 a_2 a_3 t - 12 a_2^2 a_3 t \\
    &\qquad
        - 4 a_3^3 t - 12 a_1^2 a_4 t + 24 a_1 a_2 a_4 t
        - 12 a_2^2 a_4 t + 48 a_1 a_3 a_4 t + 48 a_2 a_3 a_4 t
        - 12 a_3^2 a_4 t - 12 a_3 a_4^2 t - 4 a_4^3 t
        + 6 a_1^2 t^2\\
    &\qquad
         - 12 a_1 a_2 t^2 + 6 a_2^2 t^2 
         + 6 a_3^2 t^2 - 12 a_3 a_4 t^2 + 6 a_4^2 t^2
        - 4 a_3 t^3 - 4 a_4 t^3 + t^4
      \Bigr),\\[4pt]
g_8^{(0,4)} &= \frac{1}{96 a_1 a_2 a_3 a_4}\Bigl(
        a_1^4 + 6 a_1^2 a_2^2 + a_2^4
        - 12 a_1^2 a_2 a_3 - 4 a_2^3 a_3
        + 6 a_1^2 a_3^2 + 6 a_2^2 a_3^2 - 4 a_2 a_3^3 + a_3^4
        - 12 a_1^2 a_2 a_4 - 4 a_2^3 a_4 \\
    &\qquad
        - 12 a_1^2 a_3 a_4 + 48 a_1 a_2 a_3 a_4
        - 12 a_2^2 a_3 a_4 - 12 a_2 a_3^2 a_4 - 4 a_3^3 a_4
        + 6 a_1^2 a_4^2 + 6 a_2^2 a_4^2 - 12 a_2 a_3 a_4^2
        + 6 a_3^2 a_4^2 - 4 a_2 a_4^3\\
    &\qquad
        - 4 a_1^3 t - 12 a_1 a_2^2 t + 24 a_1 a_2 a_3 t - 4 a_3 a_4^3 + a_4^4 
        - 12 a_1 a_3^2 t + 24 a_1 a_2 a_4 t
        + 24 a_1 a_3 a_4 t + 48 a_2 a_3 a_4 t
        - 12 a_1 a_4^2 t\\
    &\qquad
        - 12 a_2 a_3 t^2 + 6 a_3^2 t^2
        - 12 a_2 a_4 t^2 - 12 a_3 a_4 t^2 + 6 a_4^2 t^2 + 6 a_1^2 t^2 + 6 a_2^2 t^2 
        - 4 a_1 t^3 + t^4
      \Bigr),\\[4pt]
g_9^{(0,4)} &= -\frac{1}{192 a_1 a_2 a_3 a_4}\Bigl(
        a_1^4 - 4 a_1^3 a_2 + 6 a_1^2 a_2^2 - 4 a_1 a_2^3 + a_2^4
        - 4 a_1^3 a_3 + 12 a_1^2 a_2 a_3 - 12 a_1 a_2^2 a_3
        + 4 a_2^3 a_3 + 6 a_1^2 a_3^2 \\
    &\qquad
        - 12 a_1 a_2 a_3^2 + 6 a_2^2 a_3^2
        - 4 a_1 a_3^3 + 4 a_2 a_3^3 + a_3^4
        - 4 a_1^3 a_4 + 12 a_1^2 a_2 a_4 - 12 a_1 a_2^2 a_4
        + 4 a_2^3 a_4 + 12 a_1^2 a_3 a_4 \\
    &\qquad
        - 24 a_1 a_2 a_3 a_4 + 12 a_2^2 a_3 a_4
        - 12 a_1 a_3^2 a_4 + 12 a_2 a_3^2 a_4 + 4 a_3^3 a_4
        + 6 a_1^2 a_4^2 - 12 a_1 a_2 a_4^2 + 6 a_2^2 a_4^2
        - 12 a_1 a_3 a_4^2 + 12 a_2 a_3 a_4^2 \\
    &\qquad
        + 6 a_3^2 a_4^2 - 4 a_1 a_4^3 + 4 a_2 a_4^3 + 4 a_3 a_4^3 + a_4^4 
        - 4 a_1^3 t + 12 a_1^2 a_2 t - 12 a_1 a_2^2 t + 4 a_2^3 t
        + 12 a_1^2 a_3 t - 24 a_1 a_2 a_3 t \\
    &\qquad
        + 12 a_2^2 a_3 t - 12 a_1 a_3^2 t + 12 a_2 a_3^2 t + 4 a_3^3 t 
        + 12 a_1^2 a_4 t - 24 a_1 a_2 a_4 t + 12 a_2^2 a_4 t
        - 24 a_1 a_3 a_4 t - 168 a_2 a_3 a_4 t \\
    &\qquad
        + 12 a_3^2 a_4 t - 12 a_1 a_4^2 t + 12 a_2 a_4^2 t
        + 12 a_3 a_4^2 t + 4 a_4^3 t 
        + 6 a_1^2 t^2 - 12 a_1 a_2 t^2 + 6 a_2^2 t^2
        - 12 a_1 a_3 t^2 + 12 a_2 a_3 t^2 + 6 a_3^2 t^2 \\
    &\qquad
        - 12 a_1 a_4 t^2 + 12 a_2 a_4 t^2 + 12 a_3 a_4 t^2 + 6 a_4^2 t^2 
        - 4 a_1 t^3 + 4 a_2 t^3 + 4 a_3 t^3 + 4 a_4 t^3 + t^4
      \Bigr),\\
g_{10}^{(0,4)} &= -\frac{1}{192 a_1 a_2 a_3 a_4}\Bigl(
        a_1^4 + 4 a_1^3 a_2 + 6 a_1^2 a_2^2 + 4 a_1 a_2^3 + a_2^4
        + 4 a_1^3 a_3 + 12 a_1^2 a_2 a_3 + 12 a_1 a_2^2 a_3
        + 4 a_2^3 a_3 + 6 a_1^2 a_3^2 \\
    &\qquad
        + 12 a_1 a_2 a_3^2 + 6 a_2^2 a_3^2
        + 4 a_1 a_3^3 + 4 a_2 a_3^3 + a_3^4 
        + 4 a_1^3 a_4 + 12 a_1^2 a_2 a_4 + 12 a_1 a_2^2 a_4
        + 4 a_2^3 a_4 + 12 a_1^2 a_3 a_4 \\
    &\qquad
        - 168 a_1 a_2 a_3 a_4 + 12 a_2^2 a_3 a_4
        + 12 a_1 a_3^2 a_4 + 12 a_2 a_3^2 a_4 + 4 a_3^3 a_4 
        + 6 a_1^2 a_4^2 + 12 a_1 a_2 a_4^2 + 6 a_2^2 a_4^2
        + 12 a_1 a_3 a_4^2 + 12 a_2 a_3 a_4^2 \\
    &\qquad
        + 6 a_3^2 a_4^2 + 4 a_1 a_4^3 + 4 a_2 a_4^3 + 4 a_3 a_4^3 + a_4^4 
        - 4 a_1^3 t - 12 a_1^2 a_2 t - 12 a_1 a_2^2 t - 4 a_2^3 t
        - 12 a_1^2 a_3 t - 24 a_1 a_2 a_3 t \\
    &\qquad
        - 12 a_2^2 a_3 t - 12 a_1 a_3^2 t - 12 a_2 a_3^2 t - 4 a_3^3 t 
        - 12 a_1^2 a_4 t - 24 a_1 a_2 a_4 t - 12 a_2^2 a_4 t
        - 24 a_1 a_3 a_4 t - 24 a_2 a_3 a_4 t \\
    &\qquad
        - 12 a_3^2 a_4 t - 12 a_1 a_4^2 t - 12 a_2 a_4^2 t
        - 12 a_3 a_4^2 t - 4 a_4^3 t 
        + 6 a_1^2 t^2 + 12 a_1 a_2 t^2 + 6 a_2^2 t^2
        + 12 a_1 a_3 t^2 + 12 a_2 a_3 t^2 + 6 a_3^2 t^2 \\
    &\qquad
        + 12 a_1 a_4 t^2 + 12 a_2 a_4 t^2 + 12 a_3 a_4 t^2 + 6 a_4^2 t^2 
        - 4 a_1 t^3 - 4 a_2 t^3 - 4 a_3 t^3 - 4 a_4 t^3 + t^4
      \Bigr),\\[4pt]
g_{11}^{(0,4)} &= \frac{1}{192 a_1 a_2 a_3 a_4}\Bigl(
        a_1^4 - 4 a_1^3 a_2 + 6 a_1^2 a_2^2 - 4 a_1 a_2^3 + a_2^4 
        - 4 a_1^3 a_3 + 12 a_1^2 a_2 a_3 - 12 a_1 a_2^2 a_3
        + 4 a_2^3 a_3 + 6 a_1^2 a_3^2 \\
    &\qquad
        - 12 a_1 a_2 a_3^2 + 6 a_2^2 a_3^2
        - 4 a_1 a_3^3 + 4 a_2 a_3^3 + a_3^4 
        + 4 a_1^3 a_4 - 12 a_1^2 a_2 a_4 + 12 a_1 a_2^2 a_4
        - 4 a_2^3 a_4 - 12 a_1^2 a_3 a_4 \\
    &\qquad
        + 24 a_1 a_2 a_3 a_4 - 12 a_2^2 a_3 a_4
        + 12 a_1 a_3^2 a_4 - 12 a_2 a_3^2 a_4 - 4 a_3^3 a_4 
        + 6 a_1^2 a_4^2 - 12 a_1 a_2 a_4^2 + 6 a_2^2 a_4^2
        - 12 a_1 a_3 a_4^2 + 12 a_2 a_3 a_4^2 \\
    &\qquad
        + 6 a_3^2 a_4^2 + 4 a_1 a_4^3 - 4 a_2 a_4^3 - 4 a_3 a_4^3 + a_4^4 
        + 4 a_1^3 t - 12 a_1^2 a_2 t + 12 a_1 a_2^2 t - 4 a_2^3 t
        - 12 a_1^2 a_3 t + 24 a_1 a_2 a_3 t \\
    &\qquad
        - 12 a_2^2 a_3 t + 12 a_1 a_3^2 t - 12 a_2 a_3^2 t - 4 a_3^3 t 
        - 36 a_1^2 a_4 t + 72 a_1 a_2 a_4 t - 36 a_2^2 a_4 t
        + 72 a_1 a_3 a_4 t + 120 a_2 a_3 a_4 t \\
    &\qquad
        - 36 a_3^2 a_4 t + 12 a_1 a_4^2 t - 12 a_2 a_4^2 t
        - 12 a_3 a_4^2 t - 12 a_4^3 t 
        + 6 a_1^2 t^2 - 12 a_1 a_2 t^2 + 6 a_2^2 t^2
        - 12 a_1 a_3 t^2 + 12 a_2 a_3 t^2 + 6 a_3^2 t^2 \\
    &\qquad
        + 12 a_1 a_4 t^2 - 12 a_2 a_4 t^2 - 12 a_3 a_4 t^2 + 6 a_4^2 t^2 
        + 4 a_1 t^3 - 4 a_2 t^3 - 4 a_3 t^3 - 12 a_4 t^3 + t^4
      \Bigr),\\[4pt]
g_{12}^{(0,4)} &= \frac{1}{192 a_1 a_2 a_3 a_4}\Bigl(
        a_1^4 - 4 a_1^3 a_2 + 6 a_1^2 a_2^2 - 4 a_1 a_2^3 + a_2^4 
        + 4 a_1^3 a_3 - 12 a_1^2 a_2 a_3 + 12 a_1 a_2^2 a_3
        - 4 a_2^3 a_3 + 6 a_1^2 a_3^2 \\
    &\qquad
        - 12 a_1 a_2 a_3^2 + 6 a_2^2 a_3^2
        + 4 a_1 a_3^3 - 4 a_2 a_3^3 + a_3^4
        + 4 a_1^3 a_4 - 12 a_1^2 a_2 a_4 + 12 a_1 a_2^2 a_4
        - 4 a_2^3 a_4 - 36 a_1^2 a_3 a_4 \\
    &\qquad
        + 72 a_1 a_2 a_3 a_4 - 36 a_2^2 a_3 a_4
        + 12 a_1 a_3^2 a_4 - 12 a_2 a_3^2 a_4 - 12 a_3^3 a_4 
        + 6 a_1^2 a_4^2 - 12 a_1 a_2 a_4^2 + 6 a_2^2 a_4^2
        + 12 a_1 a_3 a_4^2 - 12 a_2 a_3 a_4^2 \\
    &\qquad
        + 6 a_3^2 a_4^2 + 4 a_1 a_4^3 - 4 a_2 a_4^3 - 12 a_3 a_4^3 + a_4^4 
        - 4 a_1^3 t + 12 a_1^2 a_2 t - 12 a_1 a_2^2 t + 4 a_2^3 t
        - 12 a_1^2 a_3 t + 24 a_1 a_2 a_3 t \\
    &\qquad
        - 12 a_2^2 a_3 t - 12 a_1 a_3^2 t + 12 a_2 a_3^2 t - 4 a_3^3 t 
        - 12 a_1^2 a_4 t + 24 a_1 a_2 a_4 t - 12 a_2^2 a_4 t
        + 72 a_1 a_3 a_4 t + 120 a_2 a_3 a_4 t \\
    &\qquad
        - 12 a_3^2 a_4 t - 12 a_1 a_4^2 t + 12 a_2 a_4^2 t
        - 12 a_3 a_4^2 t - 4 a_4^3 t 
        + 6 a_1^2 t^2 - 12 a_1 a_2 t^2 + 6 a_2^2 t^2
        + 12 a_1 a_3 t^2 - 12 a_2 a_3 t^2 + 6 a_3^2 t^2 \\
    &\qquad
        + 12 a_1 a_4 t^2 - 12 a_2 a_4 t^2 - 36 a_3 a_4 t^2 + 6 a_4^2 t^2
        - 4 a_1 t^3 + 4 a_2 t^3 - 4 a_3 t^3 - 4 a_4 t^3 + t^4
      \Bigr),\\[4pt]
g_{13}^{(0,4)} &= \frac{1}{192 a_1 a_2 a_3 a_4}\Bigl(
        a_1^4 + 4 a_1^3 a_2 + 6 a_1^2 a_2^2 + 4 a_1 a_2^3 + a_2^4 
        - 4 a_1^3 a_3 - 12 a_1^2 a_2 a_3 - 12 a_1 a_2^2 a_3
        - 4 a_2^3 a_3 + 6 a_1^2 a_3^2 \\
    &\qquad
        + 12 a_1 a_2 a_3^2 + 6 a_2^2 a_3^2
        - 4 a_1 a_3^3 - 4 a_2 a_3^3 + a_3^4 
        - 4 a_1^3 a_4 - 12 a_1^2 a_2 a_4 - 12 a_1 a_2^2 a_4
        - 4 a_2^3 a_4 - 36 a_1^2 a_3 a_4 \\
    &\qquad
        + 120 a_1 a_2 a_3 a_4 - 36 a_2^2 a_3 a_4
        - 12 a_1 a_3^2 a_4 - 12 a_2 a_3^2 a_4 - 12 a_3^3 a_4 
        + 6 a_1^2 a_4^2 + 12 a_1 a_2 a_4^2 + 6 a_2^2 a_4^2
        - 12 a_1 a_3 a_4^2 - 12 a_2 a_3 a_4^2 \\
    &\qquad
        + 6 a_3^2 a_4^2 - 4 a_1 a_4^3 - 4 a_2 a_4^3 - 12 a_3 a_4^3 + a_4^4 
        - 4 a_1^3 t - 12 a_1^2 a_2 t - 12 a_1 a_2^2 t - 4 a_2^3 t
        + 12 a_1^2 a_3 t + 24 a_1 a_2 a_3 t \\
    &\qquad
        + 12 a_2^2 a_3 t - 12 a_1 a_3^2 t - 12 a_2 a_3^2 t + 4 a_3^3 t 
        + 12 a_1^2 a_4 t + 24 a_1 a_2 a_4 t + 12 a_2^2 a_4 t
        + 72 a_1 a_3 a_4 t + 72 a_2 a_3 a_4 t \\
    &\qquad
        + 12 a_3^2 a_4 t - 12 a_1 a_4^2 t - 12 a_2 a_4^2 t
        + 12 a_3 a_4^2 t + 4 a_4^3 t 
        + 6 a_1^2 t^2 + 12 a_1 a_2 t^2 + 6 a_2^2 t^2
        - 12 a_1 a_3 t^2 - 12 a_2 a_3 t^2 + 6 a_3^2 t^2 \\
    &\qquad
        - 12 a_1 a_4 t^2 - 12 a_2 a_4 t^2 - 36 a_3 a_4 t^2 + 6 a_4^2 t^2 
        - 4 a_1 t^3 - 4 a_2 t^3 + 4 a_3 t^3 + 4 a_4 t^3 + t^4
      \Bigr),\\[4pt]
g_{14}^{(0,4)} &= \frac{1}{192 a_1 a_2 a_3 a_4}\Bigl(
        3 a_1^4 - 4 a_1^3 a_2 + 18 a_1^2 a_2^2 - 4 a_1 a_2^3 + 3 a_2^4 
        - 4 a_1^3 a_3 - 12 a_1^2 a_2 a_3 - 12 a_1 a_2^2 a_3 - 4 a_2^3 a_3 \\
    &\qquad
        + 18 a_1^2 a_3^2 - 12 a_1 a_2 a_3^2 + 18 a_2^2 a_3^2
        - 4 a_1 a_3^3 - 4 a_2 a_3^3 + 3 a_3^4 
        - 4 a_1^3 a_4 - 12 a_1^2 a_2 a_4 - 12 a_1 a_2^2 a_4 - 4 a_2^3 a_4 \\
    &\qquad
        - 12 a_1^2 a_3 a_4 + 72 a_1 a_2 a_3 a_4 - 12 a_2^2 a_3 a_4
        - 12 a_1 a_3^2 a_4 - 12 a_2 a_3^2 a_4 - 4 a_3^3 a_4 
        + 18 a_1^2 a_4^2 - 12 a_1 a_2 a_4^2 + 18 a_2^2 a_4^2 \\
    &\qquad
        - 4 a_1 a_4^3 - 4 a_2 a_4^3 - 4 a_3 a_4^3 + 3 a_4^4 
        - 4 a_1^3 t - 12 a_1^2 a_2 t - 12 a_1 a_2^2 t - 4 a_2^3 t 
        - 12 a_1 a_3 a_4^2 - 12 a_2 a_3 a_4^2 + 18 a_3^2 a_4^2 \\
    &\qquad
        - 12 a_1^2 a_3 t + 72 a_1 a_2 a_3 t - 12 a_2^2 a_3 t
        - 12 a_1 a_3^2 t - 12 a_2 a_3^2 t - 4 a_3^3 t 
        - 12 a_1^2 a_4 t + 72 a_1 a_2 a_4 t - 12 a_2^2 a_4 t
        + 72 a_1 a_3 a_4 t \\
    &\qquad
        - 12 a_3^2 a_4 t - 12 a_1 a_4^2 t - 12 a_2 a_4^2 t
        - 12 a_3 a_4^2 t - 4 a_4^3 t 
        + 18 a_1^2 t^2 - 12 a_1 a_2 t^2 + 18 a_2^2 t^2
        - 12 a_1 a_3 t^2 - 12 a_2 a_3 t^2 + 18 a_3^2 t^2 \\
    &\qquad
        - 12 a_1 a_4 t^2 - 12 a_2 a_4 t^2 - 12 a_3 a_4 t^2 + 18 a_4^2 t^2 
        - 4 a_1 t^3 - 4 a_2 t^3 - 4 a_3 t^3 - 4 a_4 t^3 + 72 a_2 a_3 a_4 t  + 3 t^4
      \Bigr).
\end{align*}
\end{appendixformulas}

%===================================================================

\subsection{Moment formulas: dimension two}
\label{appendix:2Dmoment}
%---------------------------------------------------------

The ranges of $(a,t)$ for which the moment formulas $f_k^{(M,d)}, g_k^{(M,d)}$ are valid agree with the ranges for which the corresponding volume formulas 
$f_k^{(0,d)}, g_k^{(0,d)}$ are valid. These ranges are given in \Cref{table:2d_slice_formulas}.

\subsubsection{Slices}
\label{appendix:2Dslicemoment}

\noindent $M=1$:
\begin{appendixformulas}
\begin{align*}
f_1^{(1,2)} &= \frac{t}{2a_1^2}, \\
f_2^{(1,2)} &= -\frac{(a_1 + a_2 - t)(a_1 - a_2 - t)}{8a_1 a_2^2}
      + \frac{(a_1 + a_2 - t)(a_1 - a_2 + t)}{8a_1^2 a_2}.
\end{align*}
\end{appendixformulas}

\medskip
\noindent $M=2$:
\begin{appendixformulas}
\begin{align*}
f_1^{(2,2)} &= \frac{1}{12a_1} + \frac{a_2^2 + 3t^2}{12a_1^3}, \\
f_2^{(2,2)} &= \frac{(a_1^2 - a_1 a_2 + a_2^2 - 2a_1 t + a_2 t + t^2)(a_1 + a_2 - t)}{24a_1 a_2^3}
     + \frac{(a_1^2 - a_1 a_2 + a_2^2 + a_1 t - 2a_2 t + t^2)(a_1 + a_2 - t)}{24a_1^3 a_2}.
\end{align*}
\end{appendixformulas}

\medskip
\noindent $M=3$:
\begin{appendixformulas}
 \begin{align*}
f_1^{(3,2)} &= \frac{(a_2^2 + t^2)t}{8a_1^4}, \\
f_2^{(3,2)} &= -\frac{(a_1^2 + a_2^2 - 2a_1 t + t^2)(a_1 + a_2 - t)(a_1 - a_2 - t)}{64a_1 a_2^4}
      + \frac{(a_1^2 + a_2^2 - 2a_2 t + t^2)(a_1 + a_2 - t)(a_1 - a_2 + t)}{64a_1^4 a_2}.
\end{align*}
\end{appendixformulas}

\medskip
\noindent $M=4$:
\begin{appendixformulas}
\begin{align*}
f_1^{(4,2)} &= 
    \frac{a_1^4+a_2^4 + 10a_2^2 t^2 + 5t^4}{80a_1^5}, \\[6pt]
f_2^{(4,2)} &= \frac{(a_1 + a_2 - t)}{160}
      \left(
      \frac{
        \begin{aligned}
        &(a_1^4 - a_1^3 a_2 + a_1^2 a_2^2 - a_1 a_2^3 + a_2^4 
        - 4a_1^3 t + 3a_1^2 a_2 t - 2a_1 a_2^2 t + a_2^3 t \\
        &\quad + 6a_1^2 t^2 - 3a_1 a_2 t^2 + a_2^2 t^2 
        - 4a_1 t^3 + a_2 t^3 + t^4)
        \end{aligned}
      }{a_1 a_2^5} \right.
      \\[6pt]
      &\quad +
      \left. \frac{
        \begin{aligned}
        &(a_1^4 - a_1^3 a_2 + a_1^2 a_2^2 - a_1 a_2^3 + a_2^4
        + a_1^3 t - 2a_1^2 a_2 t + 3a_1 a_2^2 t - 4a_2^3 t \\
        &\quad + a_1^2 t^2 - 3a_1 a_2 t^2 + 6a_2^2 t^2 
        + a_1 t^3 - 4a_2 t^3 + t^4)
        \end{aligned}
      }{a_1^5 a_2}
      \right).
\end{align*}
\end{appendixformulas}

\subsubsection{Slabs}
\label{appendix:2Dslabmoment}
Observe that since the slabs are centrally symmetric, all odd moments of slabs are $0$.

\medskip
\noindent $M=1$: 
$\quad 0$.

\medskip
\noindent $M=2$:
\begin{appendixformulas}
\begin{align*}
g_1^{(2,2)} &= 
    \frac{t}{12a_1}
    + \frac{(3a_2^2 + t^2)t}{48a_1^3}
    + \frac{(a_2^2 + 3t^2)t}{48a_1^3}, \\[6pt]
g_2^{(2,2)} &= 
    \frac{a_1 - a_2 + t}{32a_1}
    - \frac{a_1 - a_2 - t}{32a_2} \\[4pt]
&\quad
    + \frac{(a_1^2 + a_1 a_2 + a_2^2 - a_1 t - 2a_2 t + t^2)(a_1 - a_2 + t)}{96a_1^3}
    - \frac{(a_1^2 + a_1 a_2 + a_2^2 - 2a_1 t - a_2 t + t^2)(a_1 - a_2 - t)}{96a_2^3} \\[4pt]
&\quad
    + \frac{(a_1^2 - a_1 a_2 + a_2^2 - 2a_1 t + a_2 t + t^2)(a_1 + a_2 - t)t}{96a_1 a_2^3}
    + \frac{(a_1^2 - a_1 a_2 + a_2^2 + a_1 t - 2a_2 t + t^2)(a_1 + a_2 - t)t}{96a_1^3 a_2}.
\end{align*}
\end{appendixformulas}

\medskip
\noindent $M=3$:
$\quad 0$.

\medskip
\noindent $M=4$:
\begin{appendixformulas}
\begin{align*}
g_1^{(4,2)} &= 
    \frac{t}{80a_1}
    + \frac{(5a_2^4 + 10a_2^2 t^2 + t^4)t}{480a_1^5}
    + \frac{(a_2^4 + 10a_2^2 t^2 + 5t^4)t}{480a_1^5}, \\[6pt]
g_2^{(4,2)} &= 
    \frac{a_1 - a_2 + t}{192a_1}
    - \frac{a_1 - a_2 - t}{192a_2} \\[2pt]
&\quad
    + \frac{
      \begin{aligned}
      &(a_1^4 + a_1^3 a_2 + a_1^2 a_2^2 + a_1 a_2^3 + a_2^4 
      - a_1^3 t - 2a_1^2 a_2 t - 3a_1 a_2^2 t - 4a_2^3 t \\
      &\quad + a_1^2 t^2 + 3a_1 a_2 t^2 + 6a_2^2 t^2 
      - a_1 t^3 - 4a_2 t^3 + t^4)(a_1 - a_2 + t)
      \end{aligned}
    }{960a_1^5} \\[2pt]
&\quad
    - \frac{
      \begin{aligned}
      &(a_1^4 + a_1^3 a_2 + a_1^2 a_2^2 + a_1 a_2^3 + a_2^4 
      - 4a_1^3 t - 3a_1^2 a_2 t - 2a_1 a_2^2 t - a_2^3 t \\
      &\quad + 6a_1^2 t^2 + 3a_1 a_2 t^2 + a_2^2 t^2 
      - 4a_1 t^3 - a_2 t^3 + t^4)(a_1 - a_2 - t)
      \end{aligned}
    }{960a_2^5} \\[2pt]
&\quad
    + \frac{
      \begin{aligned}
      &(a_1^4 - a_1^3 a_2 + a_1^2 a_2^2 - a_1 a_2^3 + a_2^4 
      - 4a_1^3 t + 3a_1^2 a_2 t - 2a_1 a_2^2 t + a_2^3 t \\
      &\quad + 6a_1^2 t^2 - 3a_1 a_2 t^2 + a_2^2 t^2 
      - 4a_1 t^3 + a_2 t^3 + t^4)(a_1 + a_2 - t)t
      \end{aligned}
    }{960a_1 a_2^5} \\[2pt]
&\quad
    + \frac{
      \begin{aligned}
      &(a_1^4 - a_1^3 a_2 + a_1^2 a_2^2 - a_1 a_2^3 + a_2^4 
      + a_1^3 t - 2a_1^2 a_2 t + 3a_1 a_2^2 t - 4a_2^3 t \\
      &\quad + a_1^2 t^2 - 3a_1 a_2 t^2 + 6a_2^2 t^2 
      + a_1 t^3 - 4a_2 t^3 + t^4)(a_1 + a_2 - t)t
      \end{aligned}
    }{960a_1^5 a_2}.
\end{align*}
\end{appendixformulas}

%--------------------------------------------------------------------
\subsection{Moment formulas: dimension three}
\label{appendix:3Dmoment}

\subsubsection{Slice}
\label{appendix:3dmomentslice}

\noindent $M=1$:
\begin{appendixformulas}
\begin{align*}
f_1^{(1,3)} &= \frac{t}{2 a_1^{2}},
\\[1.2ex]
f_2^{(1,3)}
&= -\frac{(a_1 a_2 + (a_1 + a_2)a_3)\,t^{3}}{24\,a_1^{2}a_2^{2}a_3^{2}},
\\[-0.3ex]
&\phantom{={}}
   -\frac{
      3\bigl(
        a_1^{3} a_2
        - 2a_1^{2} a_2^{2}
        + a_1 a_2^{3}
        + (a_1 + a_2)a_3^{3}
        - (2a_1^{2}+a_1 a_2+2a_2^{2})a_3^{2}
        + (a_1^{3}-a_1^{2} a_2-a_1 a_2^{2}+a_2^{3})a_3
      \bigr)t
}{24\,a_1^{2}a_2^{2}a_3^{2}},
\\[1.3ex]
f_3^{(1,3)}
&= \frac{
      a_1^{4} a_2
      - 3a_1^{3} a_2^{2}
      + 3a_1^{2} a_2^{3}
      - a_1 a_2^{4}
      - (a_1+a_2)a_3^{4}
      + (3a_1^{2}+2a_1 a_2-3a_2^{2})a_3^{3}
}{48\,a_1^{2}a_2^{2}a_3^{2}}
\\[-0.3ex]
&\phantom{={}}
   -\frac{
      (a_1 a_2 + (a_1+a_2)a_3)t^{3}
}{48\,a_1^{2}a_2^{2}a_3^{2}}
\\[-0.3ex]
&\phantom{={}}
   -\frac{
      3(a_1^{3}-2a_1 a_2^{2}+a_2^{3})a_3^{2}
      + (a_1^{4}-2a_1^{3}a_2+2a_1 a_2^{3}-a_2^{4})a_3
}{48\,a_1^{2}a_2^{2}a_3^{2}}
\\[-0.3ex]
&\phantom{={}}
   +\frac{
      3\bigl(
        a_1^{2}a_2
        - a_1 a_2^{2}
        - (a_1+a_2)a_3^{2}
        + (a_1^{2}-a_2^{2})a_3
      \bigr)t^{2}
}{48\,a_1^{2}a_2^{2}a_3^{2}}
\\[-0.3ex]
&\phantom{={}}
   -\frac{
      3\bigl(
        a_1^{3} a_2
        - 2a_1^{2}a_2^{2}
        + a_1 a_2^{3}
        + (a_1+a_2)a_3^{3}
        - (2a_1^{2}+a_1 a_2+6a_2^{2})a_3^{2}
        + (a_1^{3}-a_1^{2}a_2-a_1 a_2^{2}+a_2^{3})a_3
      \bigr)t
}{48\,a_1^{2}a_2^{2}a_3^{2}},
\\[1.3ex]
f_4^{(1,3)}
&= -\frac{
      3a_1^{3}
      - 3a_1^{2} a_2
      - 3a_1 a_2^{2}
      + 3a_2^{3}
      - 2a_1 a_2 a_3
      + (a_1+a_2)a_3^{2}
}{24\,a_1^{2}a_2^{2}}
\\[-0.3ex]
&\phantom{={}}
   -\frac{
      3(a_1+a_2)t^{2}
      - 6(a_1^{2}+a_2^{2})t
}{24\,a_1^{2}a_2^{2}},
\\[1.3ex]
f_5^{(1,3)}
&= -\frac{
      a_1^{4}a_2
      + 3a_1^{3}a_2^{2}
      + 3a_1^{2}a_2^{3}
      + a_1 a_2^{4}
      + (a_1+a_2)a_3^{4}
      + (3a_1^{2}-2a_1 a_2+3a_2^{2})a_3^{3}
}{48\,a_1^{2}a_2^{2}a_3^{2}}
\\[-0.3ex]
&\phantom{={}}
   -\frac{
      (a_1 a_2+(a_1+a_2)a_3)t^{3}
}{48\,a_1^{2}a_2^{2}a_3^{2}}
\\[-0.3ex]
&\phantom{={}}
   -\frac{
      3(a_1^{3}-2a_1^{2}a_2-2a_1 a_2^{2}+a_2^{3})a_3^{2}
      + (a_1^{4}-2a_1^{3}a_2-6a_1^{2}a_2^{2}-2a_1 a_2^{3}+a_2^{4})a_3
}{48\,a_1^{2}a_2^{2}a_3^{2}}
\\[-0.3ex]
&\phantom{={}}
   -\frac{
      3\bigl(
        a_1^{2}a_2
        + a_1 a_2^{2}
        + (a_1+a_2)a_3^{2}
        + (a_1^{2}+a_2^{2})a_3
      \bigr)t^{2}
}{48\,a_1^{2}a_2^{2}a_3^{2}}
\\[-0.3ex]
&\phantom{={}}
   +\frac{
      3\bigl(
        a_1^{3} a_2
        + 2a_1^{2}a_2^{2}
        + a_1 a_2^{3}
        + (a_1+a_2)a_3^{3}
        + (2a_1^{2}-a_1 a_2+2a_2^{2})a_3^{2}
        + (a_1^{3}-a_1^{2}a_2-a_1 a_2^{2}+a_2^{3})a_3
      \bigr)t
}{48\,a_1^{2}a_2^{2}a_3^{2}}.
\end{align*}
\end{appendixformulas}

\medskip
\noindent $M=2$:
\begin{appendixformulas}
\begin{align*}
f_1^{(2,3)}
  &= \frac{2 a_1^{2} + a_2^{2} + a_3^{2} + 3 t^{2}}
          {12 a_1^{3}},
          \\[-0.3ex]
%
%======================  f2  ======================%
f_2^{(2,3)}
&= -\frac{
      a_1^{6} a_2^{2}
      - 4 a_1^{5} a_2^{3}
      + 6 a_1^{4} a_2^{4}
      - 4 a_1^{3} a_2^{5}
      + a_1^{2} a_2^{6}
      + (a_1^{2} + a_2^{2}) a_3^{6}
      - 4 (a_1^{3} + a_2^{3}) a_3^{5}
}{96\,a_1^{3} a_2^{3} a_3^{3}}
\\[-0.3ex]
&\phantom{={}}
   -\frac{
      3 (2 a_1^{4} + a_1^{2} a_2^{2} + 2 a_2^{4}) a_3^{4}
      - 4 (a_1^{5} + 2 a_1^{3} a_2^{2}
           + 2 a_1^{2} a_2^{3} + a_2^{5}) a_3^{3}
}{96\,a_1^{3} a_2^{3} a_3^{3}}
\\[-0.3ex]
&\phantom{={}}
   -\frac{
      (a_1^{6} + 3 a_1^{4} a_2^{2} - 8 a_1^{3} a_2^{3}
       + 3 a_1^{2} a_2^{4} + a_2^{6}) a_3^{2}+(a_1^{2} a_2^{2}
       + (a_1^{2} + a_2^{2}) a_3^{2}) t^{4}
}{96\,a_1^{3} a_2^{3} a_3^{3}}
\\[-0.3ex]
&\phantom{={}}
   -\frac{
      6\bigl(
        a_1^{4} a_2^{2}
        - 2 a_1^{3} a_2^{3}
        + a_1^{2} a_2^{4}
        + (a_1^{2} + a_2^{2}) a_3^{4}
        - 2 (a_1^{3} + a_2^{3}) a_3^{3}
        + (a_1^{4} + a_2^{4}) a_3^{2}
      \bigr) t^{2}
}{96\,a_1^{3} a_2^{3} a_3^{3}},
\\[2.0ex]
%
%======================  f3  ======================%
f_3^{(2,3)}
&= -\frac{
      a_1^{6} a_2^{2}
      - 4 a_1^{5} a_2^{3}
      + 6 a_1^{4} a_2^{4}
      - 4 a_1^{3} a_2^{5}
      + a_1^{2} a_2^{6}
      + (a_1^{2} + a_2^{2}) a_3^{6}
      - 4 (a_1^{3} + 3 a_2^{3}) a_3^{5}
      + (a_1^{2} a_2^{2} + (a_1^{2} + a_2^{2}) a_3^{2}) t^{4}
}{192\,a_1^{3} a_2^{3} a_3^{3}}
\\[-0.3ex]
&\phantom{={}}
   -\frac{
      3 (2 a_1^{4} + a_1^{2} a_2^{2} + 2 a_2^{4}) a_3^{4}
      - 4 (a_1^{5} + 2 a_1^{3} a_2^{2}
      + 6 a_1^{2} a_2^{3} + 3 a_2^{5}) a_3^{3}
    (a_1^{6} + 3 a_1^{4} a_2^{2} - 8 a_1^{3} a_2^{3}
       + 3 a_1^{2} a_2^{4} + a_2^{6}) a_3^{2}
}{192\,a_1^{3} a_2^{3} a_3^{3}}
\\[-0.3ex]
&\phantom{={}}
   -\frac{
      4\bigl(
        a_1^{3} a_2^{2}
        - a_1^{2} a_2^{3}
        - (a_1^{2} + a_2^{2}) a_3^{3}
        + (a_1^{3} - a_2^{3}) a_3^{2}
      \bigr) t^{3} + 4 t (a_1^{5} + a_1^{3} a_2^{2}
           - a_1^{2} a_2^{3}
           - a_2^{5}) a_3^{2}
}{192\,a_1^{3} a_2^{3} a_3^{3}}
\\[-0.3ex]
&\phantom{={}}
   -\frac{
      6\bigl(
        a_1^{4} a_2^{2}
        - 2 a_1^{3} a_2^{3}
        + a_1^{2} a_2^{4}
        + (a_1^{2} + a_2^{2}) a_3^{4}
        - 2 (a_1^{3} + 3 a_2^{3}) a_3^{3}
        + (a_1^{4} + a_2^{4}) a_3^{2}
      \bigr) t^{2}
}{192\,a_1^{3} a_2^{3} a_3^{3}}
\\[-0.3ex]
&\phantom{={}}
   -\frac{
      4\bigl(
        a_1^{5} a_2^{2}
        - 3 a_1^{4} a_2^{3}
        + 3 a_1^{3} a_2^{4}
        - a_1^{2} a_2^{5}
        - (a_1^{2} + a_2^{2}) a_3^{5}
        + 3 (a_1^{3} - a_2^{3}) a_3^{4}
        - (3 a_1^{4} + a_1^{2} a_2^{2} + 3 a_2^{4}) a_3^{3}
      \bigr) t
}{192\,a_1^{3} a_2^{3} a_3^{3}},
\\[2.0ex]
%
%======================  f4  ======================%
f_4^{(2,3)}
&= \frac{
      a_1^{5}
      + 2 a_1^{3} a_2^{2}
      + 2 a_1^{2} a_2^{3}
      + a_2^{5}
      - (3 a_1^{4} + a_1^{2} a_2^{2}
       + 3 a_2^{4} + (a_1^{2} + a_2^{2}) a_3^{2}) t
      - (a_1^{2} + a_2^{2}) t^{3}
      + 3 (a_1^{3} + a_2^{3}) t^{2}
}{24\,a_1^{3} a_2^{3}},
\\[2.0ex]
%
%======================  f5  ======================%
f_5^{(2,3)}
&= \frac{
      a_1^{6} a_2^{2}
      + 4 a_1^{5} a_2^{3}
      + 6 a_1^{4} a_2^{4}
      + 4 a_1^{3} a_2^{5}
      + a_1^{2} a_2^{6}
      + 3 (2 a_1^{4} + a_1^{2} a_2^{2} + 2 a_2^{4}) a_3^{4}
      + 4 (a_1^{5} + 2 a_1^{3} a_2^{2}
           + 2 a_1^{2} a_2^{3} + a_2^{5}) a_3^{3}
}{192\,a_1^{3} a_2^{3} a_3^{3}},
\\[-0.3ex]
&\phantom{={}}
   +\frac{
      (a_1^{2} + a_2^{2}) a_3^{6}
      + 4 (a_1^{3} + a_2^{3}) a_3^{5}
      + (a_1^{6} + 3 a_1^{4} a_2^{2}
       + 8 a_1^{3} a_2^{3}
       + 3 a_1^{2} a_2^{4}
       + a_2^{6}) a_3^{2}
       + (a_1^{2} a_2^{2}
       + (a_1^{2} + a_2^{2}) a_3^{2}) t^{4}
}{192\,a_1^{3} a_2^{3} a_3^{3}}
\\[-0.3ex]
&\phantom{={}}
   -\frac{
      4 (a_1^{3} a_2^{2}
         + a_1^{2} a_2^{3}
         + (a_1^{2} + a_2^{2}) a_3^{3}
         + (a_1^{3} + a_2^{3}) a_3^{2}) t^{3}
         + 4 t (a_1^{5} + a_1^{3} a_2^{2}
           + a_1^{2} a_2^{3}
           + a_2^{5}) a_3^{2}
}{192\,a_1^{3} a_2^{3} a_3^{3}}
\\[-0.3ex]
&\phantom{={}}
   +\frac{
      6\bigl(
        a_1^{4} a_2^{2}
        + 2 a_1^{3} a_2^{3}
        + a_1^{2} a_2^{4}
        + (a_1^{2} + a_2^{2}) a_3^{4}
        + 2 (a_1^{3} + a_2^{3}) a_3^{3}
        + (a_1^{4} + a_2^{4}) a_3^{2}
      \bigr) t^{2}
}{192\,a_1^{3} a_2^{3} a_3^{3}}
\\[-0.3ex]
&\phantom{={}}
   -\frac{
      4\bigl(
        a_1^{5} a_2^{2}
        + 3 a_1^{4} a_2^{3}
        + 3 a_1^{3} a_2^{4}
        + a_1^{2} a_2^{5}
        + (a_1^{2} + a_2^{2}) a_3^{5}
        + 3 (a_1^{3} + a_2^{3}) a_3^{4}
        + (3 a_1^{4} + a_1^{2} a_2^{2} + 3 a_2^{4}) a_3^{3}
      \bigr) t
}{192\,a_1^{3} a_2^{3} a_3^{3}}.
\end{align*}
\end{appendixformulas}

%%%%%%%%%%%%%%%%%%%%%%%%%%%%%%%%%%%%
% Moment = 3
%%%%%%%%%%%%%%%%%%%%%%%%%%%%%%%%%%%%

\medskip
\noindent $M=3$:
\begin{appendixformulas}
\begin{align*}
f_1^{(3,3)}
&= \frac{t^{3} + (a_2^{2} + a_3^{2})\,t}{8\,a_1^{4}},
\\[-0.2ex]
f_2^{(3,3)}
&= -\frac{1}{320\,a_1^{4} a_2^{4} a_3^{4}}\Bigl[
      \bigl(a_1^{3} a_2^{3} + (a_1^{3} + a_2^{3})a_3^{3}\bigr)\,t^{5} + 10\bigl(
        a_1^{5} a_2^{3}
        - 2a_1^{4} a_2^{4}
        + a_1^{3} a_2^{5}
        + (a_1^{3} + a_2^{3})a_3^{5}
\\[-0.2ex]
&\phantom{={}-\frac{1}{320\,a_1^{4} a_2^{4} a_3^{4}}\Bigl[\ + 10\bigl(}
        - 2(a_1^{4} + a_2^{4})a_3^{4}
        + (a_1^{5} + a_2^{5})a_3^{3}
      \bigr)t^{3} + 5\bigl(
        a_1^{7} a_2^{3}
        - 4a_1^{6} a_2^{4}
        + 6a_1^{5} a_2^{5}
        - 4a_1^{4} a_2^{6}
        + a_1^{3} a_2^{7}
\\[-0.2ex]
&\phantom{={}-\frac{1}{320\,a_1^{4} a_2^{4} a_3^{4}}\Bigl[\ + 5\bigl(}
        + (a_1^{3} + a_2^{3})a_3^{7}
        - 4(a_1^{4} + a_2^{4})a_3^{6}
        + 6(a_1^{5} + a_2^{5})a_3^{5}
\\[-0.2ex]
&\phantom{={}-\frac{1}{320\,a_1^{4} a_2^{4} a_3^{4}}\Bigl[\ + 5\bigl(}
        - \bigl(4a_1^{6} + a_1^{3} a_2^{3} + 4a_2^{6}\bigr)a_3^{4}
        + (a_1^{7} - a_1^{4} a_2^{3} - a_1^{3} a_2^{4} + a_2^{7})a_3^{3}
      \bigr)t
   \Bigr],
\\[-0.2ex]
f_3^{(3,3)}
&= \frac{1}{640\,a_1^{4} a_2^{4} a_3^{4}}\Bigl[
      a_1^{8} a_2^{3}
      - 5a_1^{7} a_2^{4}
      + 10a_1^{6} a_2^{5}
      - 10a_1^{5} a_2^{6}
      + 5a_1^{4} a_2^{7}
      - a_1^{3} a_2^{8}
        - (a_1^{3} + a_2^{3})a_3^{8}
      + 5(a_1^{4} - a_2^{4})a_3^{7}
\\[-0.2ex]
&\phantom{=\frac{1}{640\,a_1^{4} a_2^{4} a_3^{4}}\Bigl[}
      + 2\bigl(5a_1^{6} + 2a_1^{3} a_2^{3} - 5a_2^{6}\bigr)a_3^{5}
      - \bigl(a_1^{3} a_2^{3} + (a_1^{3} + a_2^{3})a_3^{3}\bigr)t^{5}
    - 5\bigl(a_1^{7} - 2a_1^{3} a_2^{4} + a_2^{7}\bigr)a_3^{4}
\\[-0.2ex]
&\phantom{=\frac{1}{640\,a_1^{4} a_2^{4} a_3^{4}}\Bigl[}
      + 5\bigl(
        a_1^{4} a_2^{3}
        - a_1^{3} a_2^{4}
        - (a_1^{3} + a_2^{3})a_3^{4}
        + (a_1^{4} - a_2^{4})a_3^{3}
      \bigr)t^{4}
        + (a_1^{8} - 4a_1^{5} a_2^{3} + 4a_1^{3} a_2^{5} - a_2^{8})a_3^{3}
\\[-0.2ex]
&\phantom{=\frac{1}{640\,a_1^{4} a_2^{4} a_3^{4}}\Bigl[}
      - 10\bigl(
        a_1^{5} a_2^{3}
        - 2a_1^{4} a_2^{4}
        + a_1^{3} a_2^{5}
        + (a_1^{3} + a_2^{3})a_3^{5}
        - 2(a_1^{4} + 3a_2^{4})a_3^{4}
        + (a_1^{5} + a_2^{5})a_3^{3}
      \bigr)t^{3}
\\[-0.2ex]
&\phantom{=\frac{1}{640\,a_1^{4} a_2^{4} a_3^{4}}\Bigl[}
      + 10\bigl(
        a_1^{6} a_2^{3}
        - 3a_1^{5} a_2^{4}
        + 3a_1^{4} a_2^{5}
        - a_1^{3} a_2^{6}
        - (a_1^{3} + a_2^{3})a_3^{6}
        + 3(a_1^{4} - a_2^{4})a_3^{5}
        - 3(a_1^{5} + a_2^{5})a_3^{4}
\\[-0.2ex]
&\phantom{=\frac{1}{640\,a_1^{4} a_2^{4} a_3^{4}}\Bigl[\ + 10\bigl(}
        + (a_1^{6} - a_2^{6})a_3^{3}
      \bigr)t^{2}
        - 5\bigl(
        a_1^{7} a_2^{3}
        - 4a_1^{6} a_2^{4}
        + 6a_1^{5} a_2^{5}
        - 4a_1^{4} a_2^{6}
        + a_1^{3} a_2^{7}
\\[-0.2ex]
&\phantom{=\frac{1}{640\,a_1^{4} a_2^{4} a_3^{4}}\Bigl[\ - 5\bigl(}
        + (a_1^{3} + a_2^{3})a_3^{7}
        - 4(a_1^{4} + 3a_2^{4})a_3^{6}
        + 6(a_1^{5} + a_2^{5})a_3^{5}
    - \bigl(4a_1^{6} + a_1^{3} a_2^{3} + 12a_2^{6}\bigr)a_3^{4}
\\[-0.2ex]
&\phantom{=\frac{1}{640\,a_1^{4} a_2^{4} a_3^{4}}\Bigl[\ - 5\bigl(}
    + (a_1^{7} - a_1^{4} a_2^{3} - a_1^{3} a_2^{4} + a_2^{7})a_3^{3}
      \bigr)t
   \Bigr],
\\[-0.2ex]
f_4^{(3,3)}
&= -\frac{1}{320\,a_1^{4} a_2^{4}}\Bigl[
      5a_1^{7}
      - 5a_1^{4} a_2^{3}
      - 5a_1^{3} a_2^{4}
      + 5a_2^{7}
      - 4a_1^{3} a_2^{3} a_3
         + (a_1^{3} + a_2^{3})a_3^{4}
      + 5(a_1^{3} + a_2^{3})t^{4}
      - 20(a_1^{4} + a_2^{4})t^{3}
\\[-0.2ex]
&\phantom{=-\frac{1}{320\,a_1^{4} a_2^{4}}\Bigl[}
      + 10(a_1^{5} + a_2^{5})a_3^{2}
      + 10\bigl(3a_1^{5} + 3a_2^{5} + (a_1^{3} + a_2^{3})a_3^{2}\bigr)t^{2}
     - 20\bigl(a_1^{6} + a_2^{6} + (a_1^{4} + a_2^{4})a_3^{2}\bigr)t
   \Bigr]
,
\\[-0.2ex]
f_5^{(3,3)}
&= -\frac{1}{640\,a_1^{4} a_2^{4} a_3^{4}}\Bigl[
      a_1^{8} a_2^{3}
      + 5a_1^{7} a_2^{4}
      + 10a_1^{6} a_2^{5}
      + 10a_1^{5} a_2^{6}
      + 5a_1^{4} a_2^{7}
      + a_1^{3} a_2^{8}
    + (a_1^{3} + a_2^{3})a_3^{8}
      + 5(a_1^{4} + a_2^{4})a_3^{7}
\\[-0.2ex]
&\phantom{=-\frac{1}{640\,a_1^{4} a_2^{4} a_3^{4}}\Bigl[}
    + 10(a_1^{5} + a_2^{5})a_3^{6}
      + 2\bigl(5a_1^{6} - 2a_1^{3} a_2^{3} + 5a_2^{6}\bigr)a_3^{5}
      - \bigl(a_1^{3} a_2^{3} + (a_1^{3} + a_2^{3})a_3^{3}\bigr)t^{5}
\\[-0.2ex]
&\phantom{=-\frac{1}{640\,a_1^{4} a_2^{4} a_3^{4}}\Bigl[}
      + 5\bigl(
        a_1^{4} a_2^{3}
        + a_1^{3} a_2^{4}
        + (a_1^{3} + a_2^{3})a_3^{4}
        + (a_1^{4} + a_2^{4})a_3^{3}
      \bigr)t^{4}
      + (a_1^{8} - 4a_1^{5} a_2^{3} - 10a_1^{4} a_2^{4} - 4a_1^{3} a_2^{5} + a_2^{8})a_3^{3}
\\[-0.2ex]
&\phantom{=-\frac{1}{640\,a_1^{4} a_2^{4} a_3^{4}}\Bigl[}
      - 10\bigl(
        a_1^{5} a_2^{3}
        + 2a_1^{4} a_2^{4}
        + a_1^{3} a_2^{5}
        + (a_1^{3} + a_2^{3})a_3^{5}
    + 2(a_1^{4} + a_2^{4})a_3^{4}
        + (a_1^{5} + a_2^{5})a_3^{3}
      \bigr)t^{3}
\\[-0.2ex]
&\phantom{=-\frac{1}{640\,a_1^{4} a_2^{4} a_3^{4}}\Bigl[}
      + 10\bigl(
        a_1^{6} a_2^{3}
        + 3a_1^{5} a_2^{4}
        + 3a_1^{4} a_2^{5}
        + a_1^{3} a_2^{6}
    + (a_1^{3} + a_2^{3})a_3^{6}
        + 3(a_1^{4} + a_2^{4})a_3^{5}
        + 3(a_1^{5} + a_2^{5})a_3^{4}
      \bigr)t^{2}
\\[-0.2ex]
&\phantom{=-\frac{1}{640\,a_1^{4} a_2^{4} a_3^{4}}\Bigl[}
      + 10\bigl(
    + (a_1^{6} + a_2^{6})a_3^{3}
      \bigr)t^{2}
       + 5\bigl(a_1^{7} - 2a_1^{4} a_2^{3} - 2a_1^{3} a_2^{4} + a_2^{7}\bigr)a_3^{4}
\\[-0.2ex]
&\phantom{=-\frac{1}{640\,a_1^{4} a_2^{4} a_3^{4}}\Bigl[}
      - 5\bigl(
        a_1^{7} a_2^{3}
        + 4a_1^{6} a_2^{4}
        + 6a_1^{5} a_2^{5}
        + 4a_1^{4} a_2^{6}
        + a_1^{3} a_2^{7}
    + (a_1^{3} + a_2^{3})a_3^{7}
        + 4(a_1^{4} + a_2^{4})a_3^{6}
        + 6(a_1^{5} + a_2^{5})a_3^{5}
\\[-0.2ex]
&\phantom{=-\frac{1}{640\,a_1^{4} a_2^{4} a_3^{4}}\Bigl[\ - 5\bigl(}
        + \bigl(4a_1^{6} - a_1^{3} a_2^{3} + 4a_2^{6}\bigr)a_3^{4}
        + (a_1^{7} - a_1^{4} a_2^{3} - a_1^{3} a_2^{4} + a_2^{7})a_3^{3}
      \bigr)t
   \Bigr].
\end{align*}
\end{appendixformulas}

%%%%%%%%%%%%%%%%%%%%%%%%%%%%%%%%%%%%
% Moment = 4
%%%%%%%%%%%%%%%%%%%%%%%%%%%%%%%%%%%%

\medskip
\noindent $M=4$:
\begin{appendixformulas}
\begin{align*}
f_1^{(4,3)}
&= \frac{1}{240\,a_1^{5}}
\Bigl(
   6a_1^{4}
   + 3a_2^{4}
   + 10a_2^{2}a_3^{2}
   + 3a_3^{4}
   + 15t^{4}
   + 30(a_2^{2}+a_3^{2})t^{2}
\Bigr)
,
\\[-0.2ex]
f_2^{(4,3)}
&= -\frac{1}{960\,a_1^{5}a_2^{5}a_3^{5}}
\Bigl(
   a_1^{10}a_2^{4}
   - 6a_1^{9}a_2^{5}
   + 15a_1^{8}a_2^{6}
   - 20a_1^{7}a_2^{7}
   + 15a_1^{6}a_2^{8}
   - 6a_1^{5}a_2^{9}
   + a_1^{4}a_2^{10}
\\ &\quad
   + (a_1^{4}+a_2^{4})a_3^{10}
   - 6(a_1^{5}+a_2^{5})a_3^{9}
   + 15(a_1^{6}+a_2^{6})a_3^{8}
   - 20(a_1^{7}+a_2^{7})a_3^{7}
   + 5(3a_1^{8}+a_1^{4}a_2^{4}+3a_2^{8})a_3^{6}
\\ &\quad
   + (a_1^{4}a_2^{4}+(a_1^{4}+a_2^{4})a_3^{4})t^{6}
   - 6(a_1^{9}+2a_1^{5}a_2^{4}+2a_1^{4}a_2^{5}+a_2^{9})a_3^{5}+(a_1^{10}+5a_1^{6}a_2^{4}-12a_1^{5}a_2^{5}+5a_1^{4}a_2^{6}+a_2^{10})a_3^{4}
\\ &\quad
   + 15\bigl(
        a_1^{6}a_2^{4}
        - 2a_1^{5}a_2^{5}
        + a_1^{4}a_2^{6}
        + (a_1^{4}+a_2^{4})a_3^{6}
        - 2(a_1^{5}+a_2^{5})a_3^{5}
        + (a_1^{6}+a_2^{6})a_3^{4}
     \bigr)t^{4}
\\ &\quad
   + 15\bigl(
        a_1^{8}a_2^{4}
        - 4a_1^{7}a_2^{5}
        + 6a_1^{6}a_2^{6}
        - 4a_1^{5}a_2^{7}
        + a_1^{4}a_2^{8}
        + (a_1^{4}+a_2^{4})a_3^{8}
        - 4(a_1^{5}+a_2^{5})a_3^{7}
\\ &\qquad
        + 6(a_1^{6}+a_2^{6})a_3^{6}
        - 4(a_1^{7}+a_2^{7})a_3^{5}
        + (a_1^{8}+a_2^{8})a_3^{4}
     \bigr)t^{2}
\Bigr),
\\[-0.2ex]
f_3^{(4,3)}
&= -\frac{1}{1920\,a_1^{5}a_2^{5}a_3^{5}}
\Bigl[
   a_1^{10}a_2^{4}
   - 6a_1^{9}a_2^{5}
   + 15a_1^{8}a_2^{6}
   - 20a_1^{7}a_2^{7}
   + 15a_1^{6}a_2^{8}
   - 6a_1^{5}a_2^{9}
   + a_1^{4}a_2^{10}
\\ &\quad
   + (a_1^{4}+a_2^{4})a_3^{10}
   - 6(a_1^{5}+3a_2^{5})a_3^{9}
   + 15(a_1^{6}+a_2^{6})a_3^{8}
   - 20(a_1^{7}+3a_2^{7})a_3^{7}
\\ &\quad
   + 5(3a_1^{8}+a_1^{4}a_2^{4}+3a_2^{8})a_3^{6}
   + (a_1^{4}a_2^{4}+(a_1^{4}+a_2^{4})a_3^{4})t^{6}
   - 6(a_1^{5}a_2^{4}-a_1^{4}a_2^{5}-(a_1^{4}+a_2^{4})a_3^{5}+(a_1^{5}-a_2^{5})a_3^{4})t^{5}
\\ &\quad
   - 6(a_1^{9}+2a_1^{5}a_2^{4}+6a_1^{4}a_2^{5}+3a_2^{9})a_3^{5}
   + (a_1^{10}+5a_1^{6}a_2^{4}-12a_1^{5}a_2^{5}+5a_1^{4}a_2^{6}+a_2^{10})a_3^{4}
\\ &\quad
   + 15\bigl(
        a_1^{6}a_2^{4}
        - 2a_1^{5}a_2^{5}
        + a_1^{4}a_2^{6}
        + (a_1^{4}+a_2^{4})a_3^{6}
        - 2(a_1^{5}+3a_2^{5})a_3^{5}
        + (a_1^{6}+a_2^{6})a_3^{4}
     \bigr)t^{4}
\\ &\quad
   - 20\bigl(
        a_1^{7}a_2^{4}
        - 3a_1^{6}a_2^{5}
        + 3a_1^{5}a_2^{6}
        - a_1^{4}a_2^{7}
        - (a_1^{4}+a_2^{4})a_3^{7}
        + 3(a_1^{5}-a_2^{5})a_3^{6}
        - 3(a_1^{6}+a_2^{6})a_3^{5}
        + (a_1^{7}-a_2^{7})a_3^{4}
     \bigr)t^{3}
\\ &\quad
   + 15\bigl(
        a_1^{8}a_2^{4}
        - 4a_1^{7}a_2^{5}
        + 6a_1^{6}a_2^{6}
        - 4a_1^{5}a_2^{7}
        + a_1^{4}a_2^{8}
        + (a_1^{4}+a_2^{4})a_3^{8}
        - 4(a_1^{5}+3a_2^{5})a_3^{7}
\\ &\qquad
        + 6(a_1^{6}+a_2^{6})a_3^{6}
        - 4(a_1^{7}+3a_2^{7})a_3^{5}
        + (a_1^{8}+a_2^{8})a_3^{4}
     \bigr)t^{2}
   - 6\bigl(
        a_1^{9}a_2^{4}
        - 5a_1^{8}a_2^{5}
        + 10a_1^{7}a_2^{6}
        - 10a_1^{6}a_2^{7}
        + 5a_1^{5}a_2^{8}
        - a_1^{4}a_2^{9}
\\ &\qquad
        - (a_1^{4}+a_2^{4})a_3^{9}
        + 5(a_1^{5}-a_2^{5})a_3^{8}
        - 10(a_1^{6}+a_2^{6})a_3^{7}
        + 10(a_1^{7}-a_2^{7})a_3^{6}
        - (5a_1^{8}+a_1^{4}a_2^{4}+5a_2^{8})a_3^{5}
     \bigr)t
\\ &\qquad
     + (a_1^{9}+a_1^{5}a_2^{4}-a_1^{4}a_2^{5}-a_2^{9})a_3^{4}t
\Bigr],
\\[-0.2ex]
f_4^{(4,3)}
&= \frac{1}{480\,a_1^{5}a_2^{5}}
\Bigl(
   3a_1^{9}
   + 6a_1^{5}a_2^{4}
   + 6a_1^{4}a_2^{5}
   + 3a_2^{9}
\\ &\quad
   + 3(a_1^{5}+a_2^{5})a_3^{4}
   - 3(a_1^{4}+a_2^{4})t^{5}
   + 15(a_1^{5}+a_2^{5})t^{4}
\\ &\quad
   - 10\bigl(3a_1^{6}+3a_2^{6}+(a_1^{4}+a_2^{4})a_3^{2}\bigr)t^{3}
\\ &\quad
   + 10(a_1^{7}+a_2^{7})a_3^{2}
   + 30(a_1^{7}+a_2^{7}+(a_1^{5}+a_2^{5})a_3^{2})t^{2}
\\ &\quad
   - 3\bigl(
        5a_1^{8}
        + a_1^{4}a_2^{4}
        + 5a_2^{8}
        + (a_1^{4}+a_2^{4})a_3^{4}
        + 10(a_1^{6}+a_2^{6})a_3^{2}
     \bigr)t
\Bigr),
\\[-0.2ex]
f_5^{(4,3)}
&= \frac{1}{1920\,a_1^{5} a_2^{5} a_3^{5}}\Bigl(
   a_1^{10} a_2^{4}
   + 6a_1^{9} a_2^{5}
   + 15a_1^{8} a_2^{6}
   + 20a_1^{7} a_2^{7}
   + 15a_1^{6} a_2^{8}
   + 6a_1^{5} a_2^{9}
   + a_1^{4} a_2^{10}
\\ &\quad
   + (a_1^{4}+a_2^{4}) a_3^{10}
   + 6(a_1^{5}+a_2^{5}) a_3^{9}
   + 15(a_1^{6}+a_2^{6}) a_3^{8}
   + 20(a_1^{7}+a_2^{7}) a_3^{7}
   + 5(3a_1^{8}+a_1^{4}a_2^{4}+3a_2^{8}) a_3^{6}
\\ &\quad
   + (a_1^{4}a_2^{4} + (a_1^{4}+a_2^{4}) a_3^{4}) t^{6}
   + 6(a_1^{9} + 2a_1^{5}a_2^{4} + 2a_1^{4}a_2^{5} + a_2^{9}) a_3^{5}- 6\bigl(
       a_1^{5}a_2^{4}
       + a_1^{4}a_2^{5}
       + (a_1^{4}+a_2^{4}) a_3^{5}
       + (a_1^{5}+a_2^{5}) a_3^{4}
     \bigr) t^{5} 
\\ &\quad
   + (a_1^{10} + 5a_1^{6}a_2^{4} + 12a_1^{5}a_2^{5} + 5a_1^{4}a_2^{6} + a_2^{10}) a_3^{4} + 15 t^2 + (a_1^{8}+a_2^{8}) a_3^{4}
\\ &\quad
   + 15\bigl(
       a_1^{6}a_2^{4}
       + 2a_1^{5}a_2^{5}
       + a_1^{4}a_2^{6}
       + (a_1^{4}+a_2^{4}) a_3^{6}
       + 2(a_1^{5}+a_2^{5}) a_3^{5}
       + (a_1^{6}+a_2^{6}) a_3^{4}
     \bigr) t^{4}
\\ &\quad
   - 20\bigl(
       a_1^{7}a_2^{4}
       + 3a_1^{6}a_2^{5}
       + 3a_1^{5}a_2^{6}
       + a_1^{4}a_2^{7}
       + (a_1^{4}+a_2^{4}) a_3^{7}
       + 3(a_1^{5}+a_2^{5}) a_3^{6}
       + 3(a_1^{6}+a_2^{6}) a_3^{5}
       + (a_1^{7}+a_2^{7}) a_3^{4}
     \bigr) t^{3}
\\ &\quad
   + 15\bigl(
       a_1^{8}a_2^{4}
       + 4a_1^{7}a_2^{5}
       + 6a_1^{6}a_2^{6}
       + 4a_1^{5}a_2^{7}
       + a_1^{4}a_2^{8}
       + (a_1^{4}+a_2^{4}) a_3^{8}
       + 4(a_1^{5}+a_2^{5}) a_3^{7}
       + 6(a_1^{6}+a_2^{6}) a_3^{6}
       + 4(a_1^{7}+a_2^{7}) a_3^{5}
     \bigr) t^{2}
\\ &\quad
   - 6\bigl(
       a_1^{9}a_2^{4}
       + 5a_1^{8}a_2^{5}
       + 10a_1^{7}a_2^{6}
       + 10a_1^{6}a_2^{7}
       + 5a_1^{5}a_2^{8}
       + a_1^{4}a_2^{9}
       + (a_1^{4}+a_2^{4}) a_3^{9}
       + 5(a_1^{5}+a_2^{5}) a_3^{8}
     \bigr) t
\\ &\quad
   - 6\bigl(
    + 10(a_1^{6}+a_2^{6}) a_3^{7}
       + 10(a_1^{7}+a_2^{7}) a_3^{6}
       + (5a_1^{8}+a_1^{4}a_2^{4}+5a_2^{8}) a_3^{5}
     + (a_1^{9}+a_1^{5}a_2^{4}+a_1^{4}a_2^{5}+a_2^{9}) a_3^{4}
     \bigr) t
\Bigr).
\end{align*}
\end{appendixformulas}

%----------------------------------------------------------

\subsubsection{Slabs}
\label{appendix:3dmomentslab}

\noindent $M=1$:
$\quad 0$.

%%%%%%%%%%%%%%%%%%%%%%%%%%%%%%%%%%%%
% Moment = 2
%%%%%%%%%%%%%%%%%%%%%%%%%%%%%%%%%%%%
\medskip
\noindent $M=2$:
\begin{appendixformulas}
\begin{align*}
g_1^{(2,3)}
&= \frac{t^{3} + \bigl(2a_1^{2} + a_2^{2} + a_3^{2}\bigr)\,t}{12\,a_1^{3}}
,
\\[-0.2ex]
g_2^{(2,3)}
&= -\frac{1}{480\,a_1^{3} a_2^{3} a_3^{3}}\Bigl[
      \bigl(a_1^{2} a_2^{2} + (a_1^{2} + a_2^{2})a_3^{2}\bigr)t^{5}
     + 10\bigl(
        a_1^{4} a_2^{2}
        - 2a_1^{3} a_2^{3}
        + a_1^{2} a_2^{4}
        + (a_1^{2} + a_2^{2})a_3^{4}
\\[-0.2ex]
&\phantom{={}-\frac{1}{480\,a_1^{3} a_2^{3} a_3^{3}}\Bigl[\ + 10\bigl(}
        - 2(a_1^{3} + a_2^{3})a_3^{3}
        + (a_1^{4} + a_2^{4})a_3^{2}
      \bigr)t^{3}
     + 5\bigl(
        a_1^{6} a_2^{2}
        - 4a_1^{5} a_2^{3}
        + 6a_1^{4} a_2^{4}
        - 4a_1^{3} a_2^{5}
        + a_1^{2} a_2^{6}
\\[-0.2ex]
&\phantom{={}-\frac{1}{480\,a_1^{3} a_2^{3} a_3^{3}}\Bigl[\ + 5\bigl(}
        + (a_1^{2} + a_2^{2})a_3^{6}
        - 4(a_1^{3} + a_2^{3})a_3^{5}
        + 3\bigl(2a_1^{4} + a_1^{2} a_2^{2} + 2a_2^{4}\bigr)a_3^{4}
    - 4\bigl(a_1^{5} + 2a_1^{3} a_2^{2}
\\[-0.2ex]
&\phantom{={}-\frac{1}{480\,a_1^{3} a_2^{3} a_3^{3}}\Bigl[\ + 5\bigl(}
     + 2a_1^{2} a_2^{3} + a_2^{5}\bigr)a_3^{3}
        + \bigl(a_1^{6} + 3a_1^{4} a_2^{2} - 8a_1^{3} a_2^{3} + 3a_1^{2} a_2^{4} + a_2^{6}\bigr)a_3^{2}
      \bigr)t
   \Bigr]
,
\\[-0.2ex]
g_3^{(2,3)}
&= \frac{1}{960\,a_1^{3} a_2^{3} a_3^{3}}\Bigl[
      a_1^{7} a_2^{2}
      - 5a_1^{6} a_2^{3}
      + 10a_1^{5} a_2^{4}
      - 10a_1^{4} a_2^{5}
      + 5a_1^{3} a_2^{6}
      - a_1^{2} a_2^{7}
\\[-0.2ex]
&\phantom{=\frac{1}{960\,a_1^{3} a_2^{3} a_3^{3}}\Bigl[}
      - (a_1^{2} + a_2^{2})a_3^{7}
      + 5(a_1^{3} - a_2^{3})a_3^{6}
      - 2\bigl(5a_1^{4} + 3a_1^{2} a_2^{2} + 5a_2^{4}\bigr)a_3^{5}
    - \bigl(a_1^{2} a_2^{2} + (a_1^{2} + a_2^{2})a_3^{2}\bigr)t^{5}
\\[-0.2ex]
&\phantom{=\frac{1}{960\,a_1^{3} a_2^{3} a_3^{3}}\Bigl[}
      + 5\bigl(
        2a_1^{5}
        + 5a_1^{3} a_2^{2}
        - 5a_1^{2} a_2^{3}
        - 2a_2^{5}
      \bigr)a_3^{4}
     + 5\bigl(
        a_1^{3} a_2^{2}
        - a_1^{2} a_2^{3}
        - (a_1^{2} + a_2^{2})a_3^{3}
        + (a_1^{3} - a_2^{3})a_3^{2}
      \bigr)t^{4}
\\[-0.2ex]
&\phantom{=\frac{1}{960\,a_1^{3} a_2^{3} a_3^{3}}\Bigl[}
      - 5\bigl(
        a_1^{6}
        + 5a_1^{4} a_2^{2}
        - 12a_1^{3} a_2^{3}
        + 5a_1^{2} a_2^{4}
        + a_2^{6}
      \bigr)a_3^{3}
- 10\bigl(
        a_1^{4} a_2^{2}
        - 2a_1^{3} a_2^{3}
        + a_1^{2} a_2^{4}
        + (a_1^{2} + a_2^{2})a_3^{4}
\\[-0.2ex]
&\phantom{=\frac{1}{960\,a_1^{3} a_2^{3} a_3^{3}}\Bigl[\ - 10\bigl(}
        - 2(a_1^{3} + 3a_2^{3})a_3^{3}
        + (a_1^{4} + a_2^{4})a_3^{2}
      \bigr)t^{3}
     + \bigl(
        a_1^{7}
        + 6a_1^{5} a_2^{2}
        - 25a_1^{4} a_2^{3}
        + 25a_1^{3} a_2^{4}
        - 6a_1^{2} a_2^{5}
        - a_2^{7}
      \bigr)a_3^{2}
\\[-0.2ex]
&\phantom{=\frac{1}{960\,a_1^{3} a_2^{3} a_3^{3}}\Bigl[}
      + 10\bigl(
        a_1^{5} a_2^{2}
        - 3a_1^{4} a_2^{3}
        + 3a_1^{3} a_2^{4}
        - a_1^{2} a_2^{5}
 - (a_1^{2} + a_2^{2})a_3^{5}
        + 3(a_1^{3} - a_2^{3})a_3^{4}
        - (3a_1^{4} + a_1^{2} a_2^{2} + 3a_2^{4})a_3^{3}
\\[-0.2ex]
&\phantom{=\frac{1}{960\,a_1^{3} a_2^{3} a_3^{3}}\Bigl[\ + 10\bigl(}
        + (a_1^{5} + a_1^{3} a_2^{2} - a_1^{2} a_2^{3} - a_2^{5})a_3^{2}
      \bigr)t^{2}
 - 5\bigl(
        a_1^{6} a_2^{2}
        - 4a_1^{5} a_2^{3}
        + 6a_1^{4} a_2^{4}
        - 4a_1^{3} a_2^{5}
        + a_1^{2} a_2^{6}
\\[-0.2ex]
&\phantom{=\frac{1}{960\,a_1^{3} a_2^{3} a_3^{3}}\Bigl[\ - 5\bigl(}
        + (a_1^{2} + a_2^{2})a_3^{6}
        - 4(a_1^{3} + 3a_2^{3})a_3^{5}
        + 3\bigl(2a_1^{4} + a_1^{2} a_2^{2} + 2a_2^{4}\bigr)a_3^{4}
 - 4\bigl(a_1^{5} + 2a_1^{3} a_2^{2} + 6a_1^{2} a_2^{3} \bigr)a_3^{3}
\\[-0.2ex]
&\phantom{=\frac{1}{960\,a_1^{3} a_2^{3} a_3^{3}}\Bigl[\ - 5\bigl(}
        + \bigl(a_1^{6} + 3a_1^{4} a_2^{2} - 8a_1^{3} a_2^{3} + 3a_1^{2} a_2^{4} + a_2^{6}\bigr)a_3^{2}
      \bigr)t + 3a_2^{5}a_3^3
   \Bigr]
,
\\[-0.2ex]
g_4^{(2,3)}
&= -\frac{1}{480\,a_1^{3} a_2^{3}}\Bigl[
      5a_1^{6}
      + 25a_1^{4} a_2^{2}
      - 60a_1^{3} a_2^{3}
      + 25a_1^{2} a_2^{4}
      + 5a_2^{6}
 + (a_1^{2} + a_2^{2})a_3^{4}
      + 5(a_1^{2} + a_2^{2})t^{4}
      - 20(a_1^{3} + a_2^{3})t^{3}
\\[-0.2ex]
&\phantom{=-\frac{1}{480\,a_1^{3} a_2^{3}}\Bigl[}
      + 2\bigl(5a_1^{4} + 3a_1^{2} a_2^{2} + 5a_2^{4}\bigr)a_3^{2}
+ 10\bigl(
        3a_1^{4}
        + a_1^{2} a_2^{2}
        + 3a_2^{4}
        + (a_1^{2} + a_2^{2})a_3^{2}
      \bigr)t^{2}
\\[-0.2ex]
&\phantom{=-\frac{1}{480\,a_1^{3} a_2^{3}}\Bigl[}
      - 20\bigl(
        a_1^{5}
        + 2a_1^{3} a_2^{2}
        + 2a_1^{2} a_2^{3}
        + a_2^{5}
        + (a_1^{3} + a_2^{3})a_3^{2}
      \bigr)t
   \Bigr]
,
\\[-0.2ex]
g_5^{(2,3)}
&= -\frac{1}{960\,a_1^{3} a_2^{3} a_3^{3}}\Bigl[
      a_1^{7} a_2^{2}
      + 5a_1^{6} a_2^{3}
      + 10a_1^{5} a_2^{4}
      + 10a_1^{4} a_2^{5}
      + 5a_1^{3} a_2^{6}
      + a_1^{2} a_2^{7}
\\[-0.2ex]
&\phantom{=-\frac{1}{960\,a_1^{3} a_2^{3} a_3^{3}}\Bigl[}
      + (a_1^{2} + a_2^{2})a_3^{7}
      + 5(a_1^{3} + a_2^{3})a_3^{6}
      + 2\bigl(5a_1^{4} + 3a_1^{2} a_2^{2} + 5a_2^{4}\bigr)a_3^{5}
    - \bigl(a_1^{2} a_2^{2} + (a_1^{2} + a_2^{2})a_3^{2}\bigr)t^{5}
\\[-0.2ex]
&\phantom{=-\frac{1}{960\,a_1^{3} a_2^{3} a_3^{3}}\Bigl[}
      + 5\bigl(
        2a_1^{5}
        + 5a_1^{3} a_2^{2}
        + 5a_1^{2} a_2^{3}
        + 2a_2^{5}
      \bigr)a_3^{4}
+ 5\bigl(
        a_1^{3} a_2^{2}
        + a_1^{2} a_2^{3}
        + (a_1^{2} + a_2^{2})a_3^{3}
        + (a_1^{3} + a_2^{3})a_3^{2}
      \bigr)t^{4}
\\[-0.2ex]
&\phantom{=-\frac{1}{960\,a_1^{3} a_2^{3} a_3^{3}}\Bigl[}
      + 5\bigl(
        a_1^{6}
        + 5a_1^{4} a_2^{2}
        - 36a_1^{3} a_2^{3}
        + 5a_1^{2} a_2^{4}
        + a_2^{6}
      \bigr)a_3^{3}
 - 10\bigl(
        a_1^{4} a_2^{2}
        + 2a_1^{3} a_2^{3}
        + a_1^{2} a_2^{4}
        + (a_1^{2} + a_2^{2})a_3^{4}
\\[-0.2ex]
&\phantom{=-\frac{1}{960\,a_1^{3} a_2^{3} a_3^{3}}\Bigl[\ - 10\bigl(}
        + 2(a_1^{3} + a_2^{3})a_3^{3}
        + (a_1^{4} + a_2^{4})a_3^{2}
      \bigr)t^{3}
+ \bigl(
        a_1^{7}
        + 6a_1^{5} a_2^{2}
        + 25a_1^{4} a_2^{3}
        + 25a_1^{3} a_2^{4}
        + 6a_1^{2} a_2^{5}
        + a_2^{7}
      \bigr)a_3^{2}
\\[-0.2ex]
&\phantom{=-\frac{1}{960\,a_1^{3} a_2^{3} a_3^{3}}\Bigl[}
      + 10\bigl(
        a_1^{5} a_2^{2}
        + 3a_1^{4} a_2^{3}
        + 3a_1^{3} a_2^{4}
        + a_1^{2} a_2^{5}
 + (a_1^{2} + a_2^{2})a_3^{5}
        + 3(a_1^{3} + a_2^{3})a_3^{4}
        + (3a_1^{4} + a_1^{2} a_2^{2} + 3a_2^{4})a_3^{3}
\\[-0.2ex]
&\phantom{=-\frac{1}{960\,a_1^{3} a_2^{3} a_3^{3}}\Bigl[\ + 10\bigl(}
        + (a_1^{5} + a_1^{3} a_2^{2} + a_1^{2} a_2^{3} + a_2^{5})a_3^{2}
      \bigr)t^{2}
- 5\bigl(
        a_1^{6} a_2^{2}
        + 4a_1^{5} a_2^{3}
        + 6a_1^{4} a_2^{4}
        + 4a_1^{3} a_2^{5}
        + a_1^{2} a_2^{6}
\\[-0.2ex]
&\phantom{=-\frac{1}{960\,a_1^{3} a_2^{3} a_3^{3}}\Bigl[\ - 5\bigl(}
        + (a_1^{2} + a_2^{2})a_3^{6}
        + 4(a_1^{3} + a_2^{3})a_3^{5}
        + 3\bigl(2a_1^{4} + a_1^{2} a_2^{2} + 2a_2^{4}\bigr)a_3^{4}
+ 4\bigl(a_1^{5} + 2a_1^{3} a_2^{2} + 2a_1^{2} a_2^{3} + a_2^{5}\bigr)a_3^{3}
\\[-0.2ex]
&\phantom{=-\frac{1}{960\,a_1^{3} a_2^{3} a_3^{3}}\Bigl[\ - 5\bigl(}
        + \bigl(a_1^{6} + 3a_1^{4} a_2^{2} + 8a_1^{3} a_2^{3} + 3a_1^{2} a_2^{4} + a_2^{6}\bigr)a_3^{2}
      \bigr)t
   \Bigr].
\end{align*}
\end{appendixformulas}

\medskip
\noindent $M=3$:
$\quad 0$.

%%%%%%%%%%%%%%%%%%%%%%%%%%%%%%%%%%%%
% Moment = 4
%%%%%%%%%%%%%%%%%%%%%%%%%%%%%%%%%%%%
\medskip
\noindent $M=4$:
\begin{appendixformulas}
\begin{align*}
g_1^{(4,3)}
&= \frac{
      3t^{5}
      + 10(a_2^{2} + a_3^{2})t^{3}
      + \bigl(6a_1^{4} + 3a_2^{4} + 10a_2^{2} a_3^{2} + 3a_3^{4}\bigr)t
   }{240\,a_1^{5}},
\\[-0.2ex]
g_2^{(4,3)}
&= -\frac{1}{6720\,a_1^{5} a_2^{5} a_3^{5}}\Bigl[
      \bigl(a_1^{4} a_2^{4} + (a_1^{4} + a_2^{4})a_3^{4}\bigr)t^{7}
      + 21\bigl(
        a_1^{6} a_2^{4}
        - 2a_1^{5} a_2^{5}
        + a_1^{4} a_2^{6}
 + (a_1^{4} + a_2^{4})a_3^{6}
        - 2(a_1^{5} + a_2^{5})a_3^{5}
      \bigr)t^{5}
\\[-0.2ex]
&\phantom{={}-\frac{1}{6720\,a_1^{5} a_2^{5} a_3^{5}}\Bigl[}
      + 35\bigl(
        a_1^{8} a_2^{4}
        - 4a_1^{7} a_2^{5}
        + 6a_1^{6} a_2^{6}
        - 4a_1^{5} a_2^{7}
        + a_1^{4} a_2^{8}
+ (a_1^{4} + a_2^{4})a_3^{8}
        - 4(a_1^{5} + a_2^{5})a_3^{7}
        + 6(a_1^{6} + a_2^{6})a_3^{6}
\\[-0.2ex]
&\phantom{={}-\frac{1}{6720\,a_1^{5} a_2^{5} a_3^{5}}\Bigl[\ + 35\bigl(}
        - 4(a_1^{7} + a_2^{7})a_3^{5}
        + (a_1^{8} + a_2^{8})a_3^{4}
      \bigr)t^{3} + (a_1^{6} + a_2^{6})a_3^{4} t^5
\\[-0.2ex]
&\phantom{={}-\frac{1}{6720\,a_1^{5} a_2^{5} a_3^{5}}\Bigl[}
      + 7\bigl(
        a_1^{10} a_2^{4}
        - 6a_1^{9} a_2^{5}
        + 15a_1^{8} a_2^{6}
        - 20a_1^{7} a_2^{7}
        + 15a_1^{6} a_2^{8}
         - 6a_1^{5} a_2^{9}
        + a_1^{4} a_2^{10}
        + (a_1^{4} + a_2^{4})a_3^{10}
\\[-0.2ex]
&\phantom{={}-\frac{1}{6720\,a_1^{5} a_2^{5} a_3^{5}}\Bigl[\ + 7\bigl(}
        - 6(a_1^{5} + a_2^{5})a_3^{9}
        + 15(a_1^{6} + a_2^{6})a_3^{8}
        - 20(a_1^{7} + a_2^{7})a_3^{7}
        + 5\bigl(3a_1^{8} + a_1^{4} a_2^{4} + 3a_2^{8}\bigr)a_3^{6}
\\[-0.2ex]
&\phantom{={}-\frac{1}{6720\,a_1^{5} a_2^{5} a_3^{5}}\Bigl[\ + 7\bigl(}
        - 6\bigl(a_1^{9} + 2a_1^{5} a_2^{4} + 2a_1^{4} a_2^{5} + a_2^{9}\bigr)a_3^{5}
        + \bigl(a_1^{10} + 5a_1^{6} a_2^{4} - 12a_1^{5} a_2^{5} + 5a_1^{4} a_2^{6} + a_2^{10}\bigr)a_3^{4}
      \bigr)t
   \Bigr]
,
\\[-0.2ex]
g_3^{(4,3)}
&= \frac{1}{13440\,a_1^{5} a_2^{5} a_3^{5}}\Bigl[
      a_1^{11} a_2^{4}
      - 7a_1^{10} a_2^{5}
      + 21a_1^{9} a_2^{6}
      - 35a_1^{8} a_2^{7}
      + 35a_1^{7} a_2^{8}
      - 21a_1^{6} a_2^{9}
      + 7a_1^{5} a_2^{10}
      - a_1^{4} a_2^{11}
      - (a_1^{4} + a_2^{4})a_3^{11}
\\[-0.2ex]
&\phantom{=\frac{1}{13440\,a_1^{5} a_2^{5} a_3^{5}}\Bigl[}
      + 7(a_1^{5} - a_2^{5})a_3^{10}
      - 21(a_1^{6} + a_2^{6})a_3^{9}
      + 35(a_1^{7} - a_2^{7})a_3^{8}
      - 5\bigl(7a_1^{8} + 3a_1^{4} a_2^{4} + 7a_2^{8}\bigr)a_3^{7}
\\[-0.2ex]
&\phantom{=\frac{1}{13440\,a_1^{5} a_2^{5} a_3^{5}}\Bigl[}
    - \bigl(a_1^{4} a_2^{4} + (a_1^{4} + a_2^{4})a_3^{4}\bigr)t^{7}
    +21(- 15a_1^{4} a_2^{7}
        - a_2^{11}) a_3^4
    + 35 t^4 (+ (a_1^{7} - a_2^{7})a_3^{4})
    -35 ( 6(a_1^{6} + a_2^{6})a_3^{6})
\\[-0.2ex]
&\phantom{=\frac{1}{13440\,a_1^{5} a_2^{5} a_3^{5}}\Bigl[}
      + 7\bigl(3a_1^{9} + 8a_1^{5} a_2^{4} - 8a_1^{4} a_2^{5} - 3a_2^{9}\bigr)a_3^{6}
      + 7\bigl(
        a_1^{5} a_2^{4}
        - a_1^{4} a_2^{5}
        - (a_1^{4} + a_2^{4})a_3^{5}
        + (a_1^{5} - a_2^{5})a_3^{4}
      \bigr)t^{6}
\\[-0.2ex]
&\phantom{=\frac{1}{13440\,a_1^{5} a_2^{5} a_3^{5}}\Bigl[}
      - 7\bigl(
        a_1^{10}
        + 8a_1^{6} a_2^{4}
        - 18a_1^{5} a_2^{5}
        + 8a_1^{4} a_2^{6}
        + a_2^{10}
      \bigr)a_3^{5}
      - 21\bigl(
        a_1^{6} a_2^{4}
        - 2a_1^{5} a_2^{5}
        + a_1^{4} a_2^{6}
        + (a_1^{4} + a_2^{4})a_3^{6}
\\[-0.2ex]
&\phantom{=\frac{1}{13440\,a_1^{5} a_2^{5} a_3^{5}}\Bigl[\ - 21\bigl(}
        - 2(a_1^{5} + 3a_2^{5})a_3^{5}
        + (a_1^{6} + a_2^{6})a_3^{4}
      \bigr)t^{5}
      + \bigl(
        a_1^{11}
        + 15a_1^{7} a_2^{4}
        - 56a_1^{6} a_2^{5}
        + 56a_1^{5} a_2^{6}
      \bigr)a_3^{4}
\\[-0.2ex]
&\phantom{=\frac{1}{13440\,a_1^{5} a_2^{5} a_3^{5}}\Bigl[}
      + 35\bigl(
        a_1^{7} a_2^{4}
        - 3a_1^{6} a_2^{5}
        + 3a_1^{5} a_2^{6}
        - a_1^{4} a_2^{7}
        - (a_1^{4} + a_2^{4})a_3^{7}
        + 3(a_1^{5} - a_2^{5})a_3^{6}
        - 3(a_1^{6} + a_2^{6})a_3^{5}
      \bigr)t^{4}
\\[-0.2ex]
&\phantom{=\frac{1}{13440\,a_1^{5} a_2^{5} a_3^{5}}\Bigl[}
      - 35\bigl(
        a_1^{8} a_2^{4}
        - 4a_1^{7} a_2^{5}
        + 6a_1^{6} a_2^{6}
        - 4a_1^{5} a_2^{7}
        + a_1^{4} a_2^{8}
        + (a_1^{4} + a_2^{4})a_3^{8}
        - 4(a_1^{5} + 3a_2^{5})a_3^{7}
\\[-0.2ex]
&\phantom{=\frac{1}{13440\,a_1^{5} a_2^{5} a_3^{5}}\Bigl[\ - 35\bigl(}
        - 4(a_1^{7} + 3a_2^{7})a_3^{5}
        + (a_1^{8} + a_2^{8})a_3^{4}
      \bigr)t^{3}
      + 21\bigl(
        a_1^{9} a_2^{4}
        - 5a_1^{8} a_2^{5}
        + 10a_1^{7} a_2^{6}
        - 10a_1^{6} a_2^{7}
        + 5a_1^{5} a_2^{8}
\\[-0.2ex]
&\phantom{=\frac{1}{13440\,a_1^{5} a_2^{5} a_3^{5}}\Bigl[\ + 21\bigl(}
        - (a_1^{4} + a_2^{4})a_3^{9}
        + 5(a_1^{5} - a_2^{5})a_3^{8}
        - 10(a_1^{6} + a_2^{6})a_3^{7}
        + 10(a_1^{7} - a_2^{7})a_3^{6}
\\[-0.2ex]
&\phantom{=\frac{1}{13440\,a_1^{5} a_2^{5} a_3^{5}}\Bigl[\ + 21\bigl(}
        + (a_1^{9} + a_1^{5} a_2^{4} - a_1^{4} a_2^{5} - a_2^{9})a_3^{4}
      \bigr)t^{2}
      - 7\bigl(
        a_1^{10} a_2^{4}
        - 6a_1^{9} a_2^{5}
        + 15a_1^{8} a_2^{6}
        - 20a_1^{7} a_2^{7}
        + 15a_1^{6} a_2^{8}
\\[-0.2ex]
&\phantom{=\frac{1}{13440\,a_1^{5} a_2^{5} a_3^{5}}\Bigl[\ - 7\bigl(}
        - 6a_1^{5} a_2^{9}
        + a_1^{4} a_2^{10}
        + (a_1^{4} + a_2^{4})a_3^{10}
        - 6(a_1^{5} + 3a_2^{5})a_3^{9}
        + 15(a_1^{6} + a_2^{6})a_3^{8}
        - 20(a_1^{7} + 3a_2^{7})a_3^{7}
\\[-0.2ex]
&\phantom{=\frac{1}{13440\,a_1^{5} a_2^{5} a_3^{5}}\Bigl[\ - 7\bigl(}
        - 6\bigl(a_1^{9} + 2a_1^{5} a_2^{4} + 6a_1^{4} a_2^{5} + 3a_2^{9}\bigr)a_3^{5}
        + \bigl(a_1^{10} + 5a_1^{6} a_2^{4} - 12a_1^{5} a_2^{5} + 5a_1^{4} a_2^{6} + a_2^{10}\bigr)a_3^{4}
      \bigr)t
\\[-0.2ex]
&\phantom{=\frac{1}{13440\,a_1^{5} a_2^{5} a_3^{5}}\Bigl[}
    -21 \bigl(5a_1^{8} + a_1^{4} a_2^{4} + 5a_2^{8}\bigr)a_3^{5}
    + 5\bigl(3a_1^{8} + a_1^{4} a_2^{4} + 3a_2^{8}\bigr)a_3^{6}
      - 21a_1^{4} a_2^{9} a_3^5
   \Bigr]
,
\\[-0.2ex]
g_4^{(4,3)}
&= -\frac{1}{6720\,a_1^{5} a_2^{5}}\Bigl[
      7a_1^{10}
      + 56a_1^{6} a_2^{4}
      - 126a_1^{5} a_2^{5}
      + 56a_1^{4} a_2^{6}
      + 7a_2^{10}
    + (a_1^{4} + a_2^{4})a_3^{6}
      + 7(a_1^{4} + a_2^{4})t^{6}
      - 42(a_1^{5} + a_2^{5})t^{5}
\\[-0.2ex]
&\phantom{=-\frac{1}{6720\,a_1^{5} a_2^{5}}\Bigl[}
      + 21(a_1^{6} + a_2^{6})a_3^{4}
      + 35\bigl(3a_1^{6} + 3a_2^{6} + (a_1^{4} + a_2^{4})a_3^{2}\bigr)t^{4}
    - 140\bigl(a_1^{7} + a_2^{7} + (a_1^{5} + a_2^{5})a_3^{2}\bigr)t^{3}
\\[-0.2ex]
&\phantom{=-\frac{1}{6720\,a_1^{5} a_2^{5}}\Bigl[}
      + 5\bigl(7a_1^{8} + 3a_1^{4} a_2^{4} + 7a_2^{8}\bigr)a_3^{2}
      + 21\bigl(
        5a_1^{8}
        + a_1^{4} a_2^{4}
        + 5a_2^{8}
        + (a_1^{4} + a_2^{4})a_3^{4}
        + 10(a_1^{6} + a_2^{6})a_3^{2}
      \bigr)t^{2}
\\[-0.2ex]
&\phantom{=-\frac{1}{6720\,a_1^{5} a_2^{5}}\Bigl[}
      - 14\bigl(
        3a_1^{9}
        + 6a_1^{5} a_2^{4}
        + 6a_1^{4} a_2^{5}
        + 3a_2^{9}
        + 3(a_1^{5} + a_2^{5})a_3^{4}
        + 10(a_1^{7} + a_2^{7})a_3^{2}
      \bigr)t
   \Bigr]
,
\\[-0.2ex]
g_5^{(4,3)}
&= -\frac{1}{13440\,a_1^{5} a_2^{5} a_3^{5}}\Bigl[
      a_1^{11} a_2^{4}
      + 7a_1^{10} a_2^{5}
      + 21a_1^{9} a_2^{6}
      + 35a_1^{8} a_2^{7}
      + 35a_1^{7} a_2^{8}
      + 21a_1^{6} a_2^{9}
      + 7a_1^{5} a_2^{10}
      + a_1^{4} a_2^{11}
      + (a_1^{4} + a_2^{4})a_3^{11}
\\[-0.2ex]
&\phantom{=-\frac{1}{13440\,a_1^{5} a_2^{5} a_3^{5}}\Bigl[}
      + 7(a_1^{5} + a_2^{5})a_3^{10}
      + 21(a_1^{6} + a_2^{6})a_3^{9}
      + 35(a_1^{7} + a_2^{7})a_3^{8}
      + 7\bigl(3a_1^{9} + 8a_1^{5} a_2^{4} + 8a_1^{4} a_2^{5} + 3a_2^{9}\bigr)a_3^{6}
\\[-0.2ex]
&\phantom{=-\frac{1}{13440\,a_1^{5} a_2^{5} a_3^{5}}\Bigl[}
      + 5\bigl(7a_1^{8} + 3a_1^{4} a_2^{4} + 7a_2^{8}\bigr)a_3^{7}
      - \bigl(a_1^{4} a_2^{4} + (a_1^{4} + a_2^{4})a_3^{4}\bigr)t^{7}  
\\[-0.2ex]
&\phantom{=-\frac{1}{13440\,a_1^{5} a_2^{5} a_3^{5}}\Bigl[}
      + 7\bigl(
        a_1^{5} a_2^{4}
        + a_1^{4} a_2^{5}
        + (a_1^{4} + a_2^{4})a_3^{5}
        + (a_1^{5} + a_2^{5})a_3^{4}
      \bigr)t^{6}
      + 7\bigl(
        a_1^{10}
        + 8a_1^{6} a_2^{4}
        - 54a_1^{5} a_2^{5}
        + 8a_1^{4} a_2^{6}
        + a_2^{10}
      \bigr)a_3^{5}
\\[-0.2ex]
&\phantom{=-\frac{1}{13440\,a_1^{5} a_2^{5} a_3^{5}}\Bigl[}
      - 21\bigl(
        a_1^{6} a_2^{4}
        + 2a_1^{5} a_2^{5}
        + a_1^{4} a_2^{6}
        + (a_1^{4} + a_2^{4})a_3^{6}
        + 2(a_1^{5} + a_2^{5})a_3^{5}
        + (a_1^{6} + a_2^{6})a_3^{4}
      \bigr)t^{5}
\\[-0.2ex]
&\phantom{=-\frac{1}{13440\,a_1^{5} a_2^{5} a_3^{5}}\Bigl[}
      + \bigl(
        a_1^{11}
        + 15a_1^{7} a_2^{4}
        + 56a_1^{6} a_2^{5}
        + 56a_1^{5} a_2^{6}
        + 15a_1^{4} a_2^{7}
        + a_2^{11}
      \bigr)a_3^{4}
\\[-0.2ex]
&\phantom{=-\frac{1}{13440\,a_1^{5} a_2^{5} a_3^{5}}\Bigl[}
      + 35\bigl(
        a_1^{7} a_2^{4}
        + 3a_1^{6} a_2^{5}
        + 3a_1^{5} a_2^{6}
        + a_1^{4} a_2^{7}
        + (a_1^{4} + a_2^{4})a_3^{7}
\\[-0.2ex]
&\phantom{=-\frac{1}{13440\,a_1^{5} a_2^{5} a_3^{5}}\Bigl[\ + 35\bigl(}
        + 3(a_1^{5} + a_2^{5})a_3^{6}
        + 3(a_1^{6} + a_2^{6})a_3^{5}
        + (a_1^{7} + a_2^{7})a_3^{4}
      \bigr)t^{4}
\\[-0.2ex]
&\phantom{=-\frac{1}{13440\,a_1^{5} a_2^{5} a_3^{5}}\Bigl[}
      - 35\bigl(
        a_1^{8} a_2^{4}
        + 4a_1^{7} a_2^{5}
        + 6a_1^{6} a_2^{6}
        + 4a_1^{5} a_2^{7}
        + a_1^{4} a_2^{8}
\\[-0.2ex]
&\phantom{=-\frac{1}{13440\,a_1^{5} a_2^{5} a_3^{5}}\Bigl[\ - 35\bigl(}
        + (a_1^{4} + a_2^{4})a_3^{8}
        + 4(a_1^{5} + a_2^{5})a_3^{7}
        + 6(a_1^{6} + a_2^{6})a_3^{6}
        + 4(a_1^{7} + a_2^{7})a_3^{5}
        + (a_1^{8} + a_2^{8})a_3^{4}
      \bigr)t^{3}
\\[-0.2ex]
&\phantom{=-\frac{1}{13440\,a_1^{5} a_2^{5} a_3^{5}}\Bigl[}
      + 21\bigl(
        a_1^{9} a_2^{4}
        + 5a_1^{8} a_2^{5}
        + 10a_1^{7} a_2^{6}
        + 10a_1^{6} a_2^{7}
        + 5a_1^{5} a_2^{8}
        + a_1^{4} a_2^{9}
        + (a_1^{9} + a_1^{5} a_2^{4} + a_1^{4} a_2^{5} + a_2^{9})a_3^{4}
\\[-0.2ex]
&\phantom{=-\frac{1}{13440\,a_1^{5} a_2^{5} a_3^{5}}\Bigl[\ + 21\bigl(}
        + (a_1^{4} + a_2^{4})a_3^{9}
        + 5(a_1^{5} + a_2^{5})a_3^{8}
        + 10(a_1^{6} + a_2^{6})a_3^{7}
\\[-0.2ex]
&\phantom{=-\frac{1}{13440\,a_1^{5} a_2^{5} a_3^{5}}\Bigl[\ + 21\bigl(}
        + 10(a_1^{7} + a_2^{7})a_3^{6}
        + \bigl(5a_1^{8} + a_1^{4} a_2^{4} + 5a_2^{8}\bigr)a_3^{5}
        \bigr)t^{2}
\\[-0.2ex]
&\phantom{=-\frac{1}{13440\,a_1^{5} a_2^{5} a_3^{5}}\Bigl[}
      - 7\bigl(
        a_1^{10} a_2^{4}
        + 6a_1^{9} a_2^{5}
        + 15a_1^{8} a_2^{6}
        + 20a_1^{7} a_2^{7}
        + 15a_1^{6} a_2^{8}
        + 6a_1^{5} a_2^{9}
        + a_1^{4} a_2^{10}
\\[-0.2ex]
&\phantom{=-\frac{1}{13440\,a_1^{5} a_2^{5} a_3^{5}}\Bigl[\ - 7\bigl(}
        + (a_1^{4} + a_2^{4})a_3^{10}
        + 6(a_1^{5} + a_2^{5})a_3^{9}
        + 15(a_1^{6} + a_2^{6})a_3^{8}
        + 6\bigl(a_1^{9} + 2a_1^{5} a_2^{4} + 2a_1^{4} a_2^{5} + a_2^{9}\bigr)a_3^{5}
\\[-0.2ex]
&\phantom{=-\frac{1}{13440\,a_1^{5} a_2^{5} a_3^{5}}\Bigl[\ - 7\bigl(}
        + 20(a_1^{7} + a_2^{7})a_3^{7}
        + 5\bigl(3a_1^{8} + a_1^{4} a_2^{4} + 3a_2^{8}\bigr)a_3^{6}
\\[-0.2ex]
&\phantom{=-\frac{1}{13440\,a_1^{5} a_2^{5} a_3^{5}}\Bigl[\ - 7\bigl(}
        + \bigl(a_1^{10} + 5a_1^{6} a_2^{4} + 12a_1^{5} a_2^{5} + 5a_1^{4} a_2^{6} + a_2^{10}\bigr)a_3^{4}
      \bigr)t
   \Bigr].
\end{align*}
\end{appendixformulas}

\subsection{Moment formulas: dimension four}
\label{appendix:4Dmoment}

Due to the size of the individual rational functions, the moment formulas for slices with $M=1,2,3,4$ and slab with $M=2,4$ are available in digital form in the repository associated to this article: \url{https://github.com/RainCamel/slab_of_the_poly_norms}.

\end{document}

%% file: Images/2D_chambers.tex
% In your preamble you need:
% \usepackage{tikz}
% \usepackage{xcolor}

\begin{tikzpicture}[scale=1.05]

% --- robust color definitions (no dvipsnames required) ---
\definecolor{CubeBlue}{RGB}{120,160,200}   % soft cube blue
\definecolor{SlabBlue}{RGB}{132, 173, 205}    % slightly darker slab blue
\definecolor{GridGray}{RGB}{150,150,150}
\definecolor{VecBlue}{RGB}{45,90,150} % darker, high-contrast blue
\tikzset{
  box/.style={draw=CubeBlue, very thick},
  gridline/.style={draw=GridGray, thin},
  sweep/.style={draw=SlabBlue, very thick},
  vdot/.style={fill=SlabBlue},
}

% vertical offset for captions
\def\capy{-1.35}

% =======================
% LEFT panel
% =======================
\begin{scope}[xshift=-5.2cm]
  % square
  \filldraw[fill=CubeBlue, fill opacity=0.35, draw=CubeBlue, thick]
    (-1,-1) -- (-1,1) -- (1,1) -- (1,-1) -- cycle;

  % slab
  \filldraw[fill=SlabBlue, fill opacity=0.6, draw=SlabBlue, thick]
    (-0.8, 1) -- (0, 1) -- (0.8,-1) -- (0.0,-1) -- cycle;

  % axes
  \draw[->] (-1.2,0) -- (1.2,0);
  \draw[->] (0,-1.2) -- (0,1.2);

  % caption
  % \node[align=center, yshift=-4mm] at (0,\capy) {parallelogram\\slab};
\end{scope}

% =======================
% CENTER panel
% =======================
\begin{scope}
  \filldraw[fill=CubeBlue, fill opacity=0.35, draw=CubeBlue, thick]
    (-1,-1) -- (-1,1) -- (1,1) -- (1,-1) -- cycle;

  % axes
  \draw[->] (-1.2,0) -- (1.2,0);
  \draw[->] (0,-1.2) -- (0,1.2);

  % clip sweeping lines
  \begin{scope}
    \clip (-1,-1) rectangle (1,1);
    \foreach \c in {-2.4,-1.8,-1.2,-0.6,0,0.6,1.2,1.8,2.4} {
      \draw[sweep] (-2,4+\c) -- (2,-4+\c);
    }
  \end{scope}

  % representative direction vector (perpendicular to the sweeping/diagonal lines)
  \draw[->, very thick, draw=VecBlue]
    (0,0) -- (0.89,0.45);

  % label placed safely at bottom-right
  \node[anchor=north west, text=VecBlue]
    at (1,0.6) {$a_{\mathrm{rep}}=\frac{1}{\sqrt{5}}(2,1)$};

  % dots
  \fill[vdot] (-1, 1) circle[radius=2pt];
  \fill[vdot] ( 1, 1) circle[radius=2pt];
  \fill[vdot] ( 1,-1) circle[radius=2pt];
  \fill[vdot] (-1,-1) circle[radius=2pt];

  % labels
  \node[above left]  at (-1, 1) {\textcolor{SlabBlue}{$v_4$}};
  \node[above right] at ( 1, 1) {\textcolor{SlabBlue}{$v_3$}};
  \node[below right] at ( 1,-1) {\textcolor{SlabBlue}{$v_2$}};
  \node[below left]  at (-1,-1) {\textcolor{SlabBlue}{$v_1$}};

  % caption
  % \node[align=center, yshift=-6mm] at (0,\capy) {Sweeping across\\the square};
\end{scope}

  % two highlighted parallel segments
  %\draw[very thick, draw=magenta!80!black]
    %$(-1,1) -- (0,-1);

\begin{comment}

    \draw[very thick, draw=magenta!80!black]
    (0,1) -- (1,-1);

    \draw[very thick, draw=magenta!80!black]
    (-0.5,1) -- (0.5,-1);

    \draw[very thick, draw=magenta!80!black]
    (1,1) -- (2,-1);
\end{comment}

% =======================
% RIGHT panel
% =======================
\begin{scope}[xshift=5.2cm]
  \filldraw[fill=CubeBlue, fill opacity=0.35, draw=CubeBlue, thick]
    (-1,-1) -- (-1,1) -- (1,1) -- (1,-1) -- cycle;

  \filldraw[fill=SlabBlue, fill opacity=0.6, draw=SlabBlue, thick]
    (-1, 0) -- (-1, 1) -- (0, 1) -- (1,0) -- (1,-1) -- (0,-1) -- cycle;

  % axes
  \draw[->] (-1.2,0) -- (1.2,0);
  \draw[->] (0,-1.2) -- (0,1.2);

  % caption
  % \node[align=center, yshift=-4mm] at (0,\capy) {hexagonal\\slab};
\end{scope}

\end{tikzpicture}

%% file: Images/2D_slices.tex
\begin{tikzpicture}[scale=1.8]

% --- robust color definitions (no dvipsnames required) ---
\definecolor{CubeBlue}{RGB}{120,160,200}   % soft cube blue
\definecolor{SlabBlue}{RGB}{132, 173, 205}    % slightly darker slab blue
\definecolor{GridGray}{RGB}{150,150,150}
\definecolor{VecBlue}{RGB}{45,90,150} % darker, high-contrast blue
\tikzset{
  box/.style={draw=CubeBlue, very thick},
  gridline/.style={draw=GridGray, thin},
  sweep/.style={draw=SlabBlue, very thick},
  vdot/.style={fill=SlabBlue},
}

% vertical offset for captions
\def\capy{-1.35}

% --- First (full-size) figure ---
\begin{scope}[scale=0.8]
  % axes
  \draw[->] (-1.2,0) -- (1.2,0);
  \draw[->] (0,-1.2) -- (0,1.2);

  % square
  \filldraw[fill=CubeBlue, fill opacity=0.3, draw=CubeBlue, thick]
    (-1,-1) -- (-1,1) -- (1,1) -- (1,-1) -- cycle;

  % Q original
  \draw[SlabBlue, very thick]
    (-0.2,1) -- (0.6,-1);

  \fill[SlabBlue] (-0.2,1) circle[radius=2pt];
  \fill[SlabBlue] (0.6,-1) circle[radius=2pt];
  
  % names polytopes
  \node[] at (-0.2,1.2) {\textcolor{SlabBlue}{$v_2$}}; 
  \node[] at (0.6,-1.2) {\textcolor{SlabBlue}{$v_1$}}; 
\end{scope}

% --- Second figure (Q translated by (0,2)) ---
\begin{scope}[xshift=4cm,yshift=0cm,scale=0.8]
  % axes
  \draw[->] (-1.2,0) -- (1.2,0);
  \draw[->] (0,-1.2) -- (0,1.2);

  % triangle
  \filldraw[fill=CubeBlue, fill opacity=0.3, draw=CubeBlue, thick]
    (-1,-1) -- (-1,1) -- (1,1) -- (1,-1) -- cycle;

  % Q original
  \draw[SlabBlue, very thick]
    (0.2,1) -- (1,-0.2);

  \fill[SlabBlue] (0.2,1) circle[radius=2pt];
  \fill[SlabBlue] (1,-0.2) circle[radius=2pt];
  
  % names polytopes
  \node[] at (0.2,1.2) {\textcolor{SlabBlue}{$v_2$}}; 
  \node[] at (1.25,-0.2) {\textcolor{SlabBlue}{$v_1$}}; 
\end{scope}
\end{tikzpicture}